 \newtheorem{thm}{Theorem}[section]
 \newtheorem{lem}[thm]{Lemma}
 \newtheorem{prop}[thm]{Proposition}
 \theoremstyle{definition}
 \theoremstyle{remark}
 \newtheorem{rem}{Remark}
 \numberwithin{equation}{section}
\DeclareMathOperator{\PV}{PV}
\newcommand{\per}{\text{per}}
\newcommand{\loc}{\text{loc}}
\newcommand{\ud}{\,\mathrm{d}}
\newcommand{\Erho}{E_{\rho}}
\newcommand{\Eu}{E_{u}}
\newcommand{\Ephi}{E_{\phi}}
\newcommand{\Eh}{E_{h}}
\newcommand{\Eud}{E_{u}^{\delta}}
\newcommand{\Phib}{\Phi_{b}}
\newcommand{\Phibp}{{\Phi_{b}}'}
\newcommand{\wps}{\per^\star}
\newcommand{\dpstyle}{\displaystyle}
\newcommand{\lpi}{\frac{2\pi}{L}}
\newcommand{\ale}{a.e.\,\,}
\begin{document}

\title[Continuum limit of a mesoscopic model of step
  motion on vicinal surfaces]{Continuum limit of a mesoscopic model with elasticity of step
  motion on vicinal surfaces}

\author{Yuan Gao}
\address{School of
Mathematical Sciences\\
   Fudan  University,
Shanghai 200433, P. R.\ China\\Department of Mathematics and Department of
  Physics\\Duke University,
  Durham NC 27708, USA}
\email{gaoyuan12@fudan.edu.cn}
\author{Jian-Guo Liu}
\address{Department of Mathematics and Department of
  Physics\\Duke University,
  Durham NC 27708, USA}
\email{jliu@phy.duke.edu}

\author{Jianfeng Lu}
\address{Department of Mathematics, Department of
  Physics, and Department of Chemistry\\Duke University, Box 90320,
  Durham NC 27708, USA}
\email{jianfeng@math.duke.edu}

\date{\today}

\begin{abstract}
  This work considers the rigorous derivation of continuum models of
  step motion starting from a mesoscopic
Burton-Cabrera-Frank (BCF) type model following the
  work [Xiang, SIAM J. Appl. Math. 2002]. We prove that as the lattice
  parameter goes to zero, for a finite time interval, a modified
  discrete model converges to the strong solution of the limiting PDE
  with first order convergence rate.
\end{abstract}

\maketitle

\section{Introduction}

In this work, we revisit the derivation of continuum model for step
flow with elasticity on vicinal surfaces. The starting point is the
Burton-Cabrera-Frank (BCF) type models for step flow \cite{BCF}; see
\cite{Duport1995a, Duport1995b, Tersoff1995,Tersoff1998} for extensions
to include elastic effects. These are mesoscopic models which
track the position of each individual step (and hence keep the
discrete nature of the step fronts), while adopt a continuum
approximation for the interactions of the steps with surrounding atoms
of the thin film. The step motion is hence characterized by a system
of ODEs. Such models are widely used for crystal growth of thin films
on substrates, with many scientific and engineering applications
\cite{PimpinelliVillain:98, WeeksGilmer:79, Zangwill:88}.  The goal of
this work is to rigorously understand the PDE limit of such models.

To avoid unnecessary technical difficulties, we will study a periodic
train of steps in this work. Denote the step locations at time $t$ by
$x_i(t), i \in \mathbb{Z}$, we assume that
\begin{equation}\label{1}
  x_{i+N}(t) - x_{i}(t) = L, \qquad \forall\,i\in \mathbb{Z},\,\forall\,t\geq 0,
\end{equation}
where $L$ is a fixed length of the period. Thus, only the step
locations in one period $\{x_i(t), \, i = 1, \ldots, N\}$ are
considered as degrees of freedom, see Figure 1 for example.

\begin{figure}[htbp]
\includegraphics[width=6in]{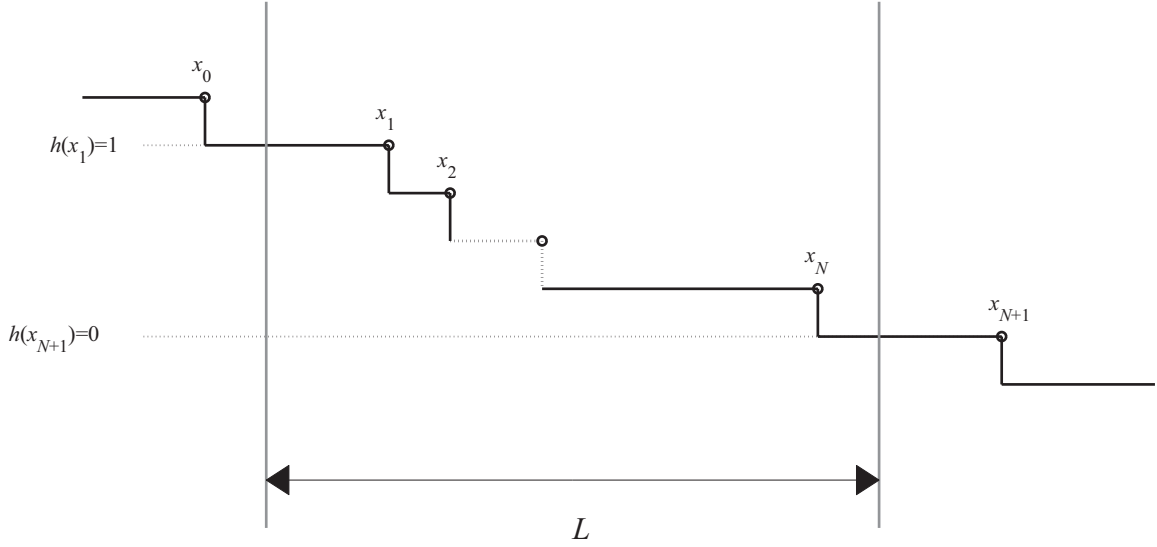}
\caption{ An example of one periodic steps.}
\end{figure}

We denote the height of each step as $a = \frac{1}{N}$, and thus the total
height change across the $N$ steps in the period is given by $1$. Corresponding to the step locations, we define the height profile $h_N$ of the steps as
\begin{equation}\label{20_1.2}
  h_N(x, t) = \frac{N-i}{N}, \quad \text{for } x \in [x_i(t), x_{i+1}(t)), \,\, i = 1, \ldots, N.
\end{equation}
Moreover, $h_N$ can be further extended, consistent with the periodic assumption \eqref{1}, such that
\begin{equation}\label{2}
  h_N(x+L)-h_N(x)= -1,\qquad \forall\,x\in \mathbb{R}.
\end{equation}
For the continuum limit, we consider the step height $a \to 0$ or
equivalently, the number of steps in one period $N \to \infty$.

In the pioneering work \cite{Xiang2002} (see also \cite{Xiang2004}),
\textsc{Xiang} considered a BCF type model which incorporates the
elastic interaction as\footnote{Compared to \cite{Xiang2002}, we drop
  all the physical constants that are mathematically unimportant.}
\begin{equation}\label{3}
  \frac{\ud x_i}{\ud t}=a^2 \Bigl(\frac{f_{i+1}-f_{i}}{x_{i+1}-x_i}-\frac{f_{i}-f_{i-1}}{x_{i}-x_{i-1}}\Bigr),\quad i=1,\cdots,N,
\end{equation}
where $f_i$'s are the local chemical potential given by
\begin{equation*}
  f_i := \frac{\partial E}{\partial x_i} =-\sum_{j\neq i}\Bigl(\frac{\alpha _1}{x_j-x_i}-\frac{\alpha _2}{(x_j-x_i)^3}\Bigr),
\end{equation*}
with the parameters $\alpha_1 = \frac{4}{\pi} a^4$, { $\alpha_2 = \frac{2}{\pi} a^6$} and the energy functional $E$ given by
\begin{equation*}
  E = \frac{1}{2}\sum_{i=1}^N\sum_{j\neq i} \Bigl(\alpha_1\ln \lvert x_i-x_j\rvert + \frac{\alpha_2}{2} \frac{1}{(x_i-x_j)^2}\Bigr).
\end{equation*}
For the limit $a\rightarrow 0$, \textsc{Xiang}~\cite{Xiang2002}
asymptotically derived the corresponding continuum model
\begin{equation}\label{4}
  h_t= \pi \alpha_1 a^2 \Bigl(- H(h_x)+ \frac{1}{2\pi}  \frac{ah_{xx}}{h_x}
  + \frac{\pi}{2} \frac{\alpha_2}{\alpha_1} \frac{h_x h_{xx}}{a} \Bigr)_{xx}.
\end{equation}
Here $H(\cdot)$ is the $L$-periodic Hilbert transform:
\begin{equation}
  (H u)(x) :=\frac{1}{L}\PV\int_{0}^{L}u(x-s)\cot(\frac{\pi s}{L}) \ud s.
\end{equation}
Observe that for the particular choice of the parameters $\alpha_1$
and $\alpha_2$, \eqref{4} suggests to rescale $t$ to consider time
scale of the order $O(a^{-6})$.  Moreover, {  the coefficients in
  front of the term $h_x h_{xx}$ and the term $\frac{h_{xx}}{h_x}$ in
  the bracket scale as $a$ so they become higher order terms compared
  with the first one. As argued in \cite{Xiang2004}, the term
  $a\frac{h_{xx}}{h_x}$ is the correction to the misfit elastic energy
  density due to the discrete nature of the stepped surface. Although
  it is small compared to the leading-order term $H(h_x)$, it is
  comparable with the term $ah_xh_{xx}$, which comes from the broken
  bond elastic interaction between steps.  When formally ignoring
  these terms with small $a$-dependent amplitude, the PDE analysis for
  $h_t=-H(h_x)_{xx}$ is easy because the operator $H(\cdot)_x$ is a
  negative operator.  }

Recently, motivated by the PDE \eqref{4} proposed by \cite{Xiang2002},
\textsc{Dal Maso}, \textsc{Fonseca} and \textsc{Leoni}
\cite{Leoni2014} studied the weak solution of \footnote{For the
  convenience of calculation, we set the coefficients slightly
  different from \cite{Leoni2014}. Moreover, instead of taking $h$ to
  be increasing as in \cite{Leoni2014}, we take $h$ to be decreasing
  corresponding to physical interpretation of $h$ being the height of
  the vicinal surface, which is the same convention as
  \cite{Xiang2002,Xiang2004}.}
\begin{equation}\label{5}
  h_t =  \Bigl(-\lpi H(h_x)+\big(3h_x+\frac{1}{h_x}\big)h_{xx}\Bigr)_{xx},
\end{equation}
in terms of a variational inequality. Note that all the coefficients
in this PDE are $O(1)$, unlike the PDE \eqref{4}. They validated \eqref{5} analytically
by verifying the positivity of $h_x$. Rather remarkably,
they found an approximation problem and proved the limit of
the solution to the approximation problem also satisfies the weak
version of variational inequality, which is satisfied by strong
solution. Moreover, \textsc{Fonseca}, \textsc{Leoni} and \textsc{Lu}
\cite{Leoni2015} obtained the existence and uniqueness of the weak
solution. They applied Rothe method
and truncation method to carefully deal with the singularity term.

\smallskip

Our goal is to rigorously prove the continuum limit of BCF type
models for step flow. While it would be nice to recover \eqref{4} using
the scaling considered in \cite{Xiang2002}, it is quite challenging
(if not impossible) since the PDE \eqref{4} involves {  two scales},
correspond to the three terms on the right hand side:
\begin{align*}
  O(1): \quad H(h_x); \qquad {  O(a)}: \quad h_x h_{xx}; \qquad
  O(a): \quad \frac{h_{xx}}{h_x}.
\end{align*}
Instead, we follow the scaling of the PDE \eqref{5} considered in
\cite{Leoni2014, Leoni2015}. We will derive \eqref{5} as the continuum
limit from a slightly modified BCF type mesoscopic model: we consider
the step-flow ODE \eqref{3} with a rescaled time, i.e.,
\begin{equation}\label{ODE22}
\frac{\ud x_i}{\ud t}=\frac{1}{a} \biggl(\frac{f_{i+1}-f_{i}}{x_{i+1}-x_i}-\frac{f_{i}-f_{i-1}}{x_{i}-x_{i-1}}\biggr),\quad i=1,\cdots,N,
\end{equation}
 with a modified
chemical potential
\begin{equation}\label{622_1}
  f_i:= -\frac{2}{L}\sum_{j\neq i}\frac{a}{x_j-x_i}+\biggl(\frac{1}{x_{i+1}-x_i}-\frac{1}{x_{i}-x_{i-1}}\biggr) +\biggl(\frac{a^2}{(x_{i+1}-x_i)^3}-\frac{a^2}{(x_i-x_{i-1})^3}\biggr);
\end{equation}
see Section \ref{sec5}.  The first term in $f_i$ comes from the misfit
elastic interaction between the steps, which is an attractive
interaction. The second and third terms come from the broken bond
elastic interaction between steps, which are repulsive
terms. Different from \textsc{Xiang}'s chemical potential in
\cite{Xiang2002}, we choose the scaling so that the attractive and
repulsive interactions have the same order as $a \to 0$. We add the
repulsive term $\frac{1}{x_{i+1}-x_i}-\frac{1}{x_{i}-x_{i-1}}$ to
cancel a singularity from the first term, which seems to be necessary.
Moreover, to ease the mathematical derivation, we restrict the
repulsive terms to the nearest neighbor, which is the dominant
contribution.

Our modified ODE system, from both the view of chemical potential and
free energy, is balanced in order. Therefore unlike the original ODE
systems which (at least heuristically) lead to a PDE with multiple
scales, our system converges to PDE \eqref{5} in the limit. We
are also able to obtain the convergence rate of order $a$ for
local strong solution of the continuum PDE.

For the study of the PDE \eqref{5}, we discover four variational
structures with four corresponding energy functionals, in terms of
step height $h$, step location $\phi$, step density $\rho$ and
anti-derivative of $h$, denoted as $u$.
{  Those four kinds of descriptions are equivalent rigorously for strong local solution but it is convenient to use different one when studying different aspects of our problem. The height $h$ is the original variable indicating the evolution of surface height while it is a better idea to use $\rho$ and $u$ to study the strong local solution of continuum model \eqref{5} due to its concise variational structure.}
  In the proof of convergence
rate in Section \ref{sec5}, \ref{consis} and \ref{sec7}, {  since the original discrete model is described by each step location $x_i$,} it is more
natural to use the variational structure of step location $\phi$,
which is the inverse function of step height $h$, i.e.
\begin{equation}\label{phi}
  \alpha = h(\phi(\alpha, t), t), \quad \forall\, \alpha.
\end{equation}


For the properties of local strong solution of continuum PDE
\eqref{5}, we used the variational structures for $u$ and $\rho$ to
establish some \textit{a-priori} estimates and then obtain the
existence and uniqueness for local strong solution to the continuum
PDE; see Section \ref{sec4}. We state the main result of Section
\ref{sec4} below, with the notations $I:=[0,L]$,
\begin{equation}\label{626_1}
W^{k,p}_{\wps}(I):=\{u(x)\in W_{\loc}^{k,p}(\mathbb{R});\, u(x+L)-u(x)=-1\},
\end{equation}
and
\begin{equation}\label{626_2}
W^{k,p}_{\per_0}(I):=\{u\in W^{k,p}(I); u \text{ is $L$-periodic and mean value zero in one period}\}.
\end{equation}
Standard notations for Sobolev spaces are assumed above.

\begin{thm}\label{local_h}
Assume $h^0\in W^{m,2}_{\per^\star}(I),$ $h_{x}^0\leq \beta$, for some constant $\beta<0$, $m\in \mathbb{Z},\, m\geq 6$. Then there exists time $T_{m}>0$ depending on $\beta,\,\|h^0\|_{W^{m,2}_{\per^\star}}$
such that
$$h\in L^{\infty}([0,T_m];W^{m,2}_{\per^\star}(I))\cap L^2([0,T_m];W^{m+2,2}_{\per^\star}(I))\cap C([0,T_m];W^{m-4,2}_{\per^\star}(I)),$$
$$h_t\in L^{\infty}([0,T_m];W^{m-4,2}_{\per}(I))$$
is the unique strong solution of \eqref{5} with initial data $h^0$, and $h$ satisfies
\begin{equation}\label{620_3}
h_x\leq\frac{\beta}{2},\quad \ale  t\in[0,T_m],\,x\in [0,L].
\end{equation}
\end{thm}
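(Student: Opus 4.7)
The plan is to construct the strong solution by a Galerkin scheme, to close a priori estimates at the $W^{m,2}$ level using the flux structure of \eqref{5}, and to maintain the strict slope bound $h_x \le \beta/2$ on a short interval by a continuity argument. The key algebraic observation is
\begin{equation*}
\bigl(3 h_x + \tfrac{1}{h_x}\bigr) h_{xx} = \partial_x\bigl(\tfrac{3}{2} h_x^2 + \log|h_x|\bigr),
\end{equation*}
so \eqref{5} takes the divergence form $h_t = \partial_{xx}\mu$ with $\mu = -\tfrac{2\pi}{L}H(h_x)+\partial_x(\tfrac{3}{2}h_x^2+\log|h_x|)$. Equivalently, in the density variable $\rho := -h_x>0$,
\begin{equation*}
\rho_t = -\partial_x^{3}\bigl[\tfrac{2\pi}{L}H(\rho)+\tfrac{3}{2}(\rho^2)_x+(\log \rho)_x\bigr],
\end{equation*}
a fourth-order parabolic equation with principal part $-3\rho\,\rho_{xxxx}$, coercive as long as $\rho$ stays bounded away from $0$; this is the variational structure for $\rho$ alluded to in Section \ref{sec4}.

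I would first solve a regularized problem, replacing $\log|h_x|$ by a smooth function agreeing with it on $\{h_x \le -|\beta|/4\}$, and construct approximations by a Faedo--Galerkin projection onto trigonometric polynomials. On each Galerkin level, derive uniform bounds by differentiating the PDE $k$ times and testing against $\partial_x^k h$ for $0 \le k \le m$. Under the standing assumption $h_x \le \beta/2$, the top-order operator behaves like $3h_x\partial_x^4$ with coefficient of absolute value at least $3|\beta|/2$, producing a dissipation
\begin{equation*}
\int_0^L (-h_x)\,|\partial_x^{m+2} h|^2 \ud x \ge \tfrac{|\beta|}{2}\,\|\partial_x^{m+2} h\|_{L^2}^2,
\end{equation*}
into which the lower-order nonlinear remainders, the $1/h_x$-derived terms (estimated via $|h_x|^{-1}\le 2/|\beta|$), and the Hilbert-transform piece (which is bounded on $L^2$ modulo constants, hence harmless after one integration by parts plus Young) can be absorbed by Cauchy--Schwarz, Gagliardo--Nirenberg, and one-dimensional Sobolev embeddings. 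Gronwall then yields the uniform bound in $L^\infty_t W^{m,2}_x \cap L^2_t W^{m+2,2}_x$, and the PDE itself supplies $h_t \in L^\infty_t W^{m-4,2}_x$.

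To maintain $h_x \le \beta/2$, the hypothesis $m\ge 6$ together with the one-dimensional embedding $W^{m-4,2}\hookrightarrow C^1$ gives $h_{xt}\in L^\infty([0,T]; C^0(I))$ with norm depending only on $\beta$ and $\|h^0\|_{W^{m,2}_{\wps}}$; choosing $T_m$ small enough in
\begin{equation*}
h_x(t,x) = h_x^0(x) + \int_0^t h_{xt}(s,x)\ud s
\end{equation*}
preserves the slope bound and closes the bootstrap, validating all previous estimates and in particular justifying the removal of the regularization of $\log|h_x|$. Passing to the limit in the regularization and the Galerkin truncation by Aubin--Lions compactness produces a strong solution on $[0,T_m]$, and $h\in C([0,T_m];W^{m-4,2}_{\wps})$ follows from $h_t\in L^\infty_t W^{m-4,2}_x$. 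Uniqueness is obtained by an energy estimate on $w = h^{(1)}-h^{(2)}$: the nonlinear coefficients are Lipschitz on the set $\{h_x\le \beta/2\}$, so $w$ satisfies a linear fourth-order parabolic inequality and Gronwall closes the argument.

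The main obstacle is controlling the top-order estimate in the simultaneous presence of the singular coefficient $1/h_x$ and the nonlocal term $H(h_x)$: applying $\partial_x^m$ to $(3h_x+1/h_x)h_{xx}$ generates many cross terms at order $m+2$ through the Leibniz and chain rules, and one must verify that the worst piece is exactly the coercive one, while all others carry coefficients already controlled by Sobolev embedding from the inductive bound at levels $k<m$. Making this absorption precise, in particular handling $\partial_x^m(1/h_x)$ as a combinatorial sum of products of derivatives of $h_x$ divided by increasing powers of $h_x$, is the technical heart of the argument and is where the uniform lower bound $|h_x|\ge |\beta|/2$ is indispensable.
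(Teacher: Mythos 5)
Your proposal is correct and follows essentially the same route as the paper: top-order energy estimates exploiting the coercivity of the coefficient $3h_x+\frac{1}{h_x}$ under the slope bound, a regularize-and-pass-to-the-limit construction with Aubin--Lions compactness, a continuity/bootstrap argument using $h_{xt}\in L^\infty_t L^\infty_x$ (which is where $m\ge 6$ enters) to preserve $h_x\le \beta/2$, and an $L^2$ Gr\"onwall estimate on the difference of two solutions for uniqueness. The only differences are implementation details: the paper first proves the analogous statement for the antiderivative $u$ (Theorem \ref{local}) using a mollifier regularization and then transfers to $h$ via Proposition \ref{h-u}, while you work directly in the $h$/$\rho$ variables with a Galerkin truncation and a smoothed logarithm; both are standard and equivalent in substance.
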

Moreover, we also study the stability of the
linearized $\phi$-PDE. This is important in the construction of
approximate solutions to the PDE with high-order consistency, which is
crucial in the proof of convergence.

For the convergence result of mesoscopic model, we first testify our modified ODE system has a global-in-time solution; see Proposition \ref{prop625}. More explicitly, we prove that the steps and terraces will keep monotone if we have monotone initial data. This is consistent with the positivity of step density $\rho$ of the PDE. Then we calculate the consistency of the step location continuum equation and ODE system till order $a$; see Theorem \ref{th4.4}. However, due to the nonlinearity and fourth order derivative in our problem, we need to utilize \textit{a-priori} assumption method and construct an auxiliary solution with high-order consistency. By establishing the stability of the linearized ODE system and carefully calculating the Hessian of coefficient matrix of ODE system, which is a 3rd-order tensor, we finally get the convergence rate $O(a)$ of modified ODE system to its continuum PDE limit.

Recall the definition \eqref{20_1.2} and \eqref{phi}. Denote
$$\alpha_i=h(x_i(0),0)=\frac{N-i}{N},$$
and
$$\phi_i(t)=\phi(\alpha_i,t).$$
We state the main convergence result in this work as follows:
\begin{thm}\label{thm_main}
Let the step height be $a=\frac{1}{N}$. Assume for some constant $\beta<0$, some $m\in\mathbb{N}$ large enough, the initial datum $h(0)\in W^{m,2}_{\per^\star}(I)$ satisfies
\begin{equation}\label{h_initial}
 h_x(0)\leq \beta <0.
\end{equation}
Let $h(x,t)$ be the exact solution of \eqref{5} on $[0,T_m],$ where $T_m$ is the maximal existence time for strong solution defined in Theorem \ref{local_h}. Let $\phi(\alpha,t)$ be the inverse function of $h(x,t)$ defined in \eqref{phi}, whose nodal values are denoted as $\phi_N(t):=\{\phi(\alpha_i,t),\,i=1,\cdots,N\}.$
Let $x(t)=(x_1(t),\cdots,x_N(t))$ be the solution to ODE \eqref{ODE22} with $f_i$ defined in \eqref{622_1} and initial data $x(0)=\phi_N(0)$.
Then there exists $N_0$ large enough such that for $N>N_0$, we have $x(t)$ converges to $\phi(\alpha,t)$ with convergence rate $a$, in the sense of
\begin{equation}\label{620main}
\|x(t)-\phi_N(t)\|_{\ell^2}\leq C(\beta,\|h^0\|_{W^{m,2}_{\per^\star}})a, \text{ for }t\in[0,T_m],
\end{equation}
where $C(\beta,\|h^0\|_{W^{m,2}_{\per^\star}})$ is a constant depending only on $\beta$ and $\|h^0\|_{W^{m,2}_{\per^\star}}$.
\end{thm}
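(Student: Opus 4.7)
The plan is to compare the discrete trajectory $x(t)$ with the nodal values $\phi_N(t)$ of the PDE solution directly in $\ell^2$, exploiting the parallel variational structure shared by the continuum equation for $\phi$ (the inverse of $h$) and the ODE system \eqref{ODE22}--\eqref{622_1}. First I would recast both flows in the $\phi$-variable and define the error vector $e_i(t) = x_i(t) - \phi(\alpha_i, t)$. Subtracting the two evolutions gives schematically $\dot e = A(t)\,e + \mathcal{N}(e) + R(t)$, where $A(t)$ is the linearization of the discrete right-hand side along $\phi_N(t)$, $\mathcal{N}$ collects the quadratic and higher remainder (whose leading coefficient is the third-order Hessian tensor of the ODE operator alluded to in the introduction), and $R(t)$ is the truncation residual obtained by evaluating the ODE operator at the smooth solution $\phi$.

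By the consistency analysis (Theorem \ref{th4.4}), a Taylor expansion gives $\|R(t)\|_{\ell^2}=O(a)$. However, since the target error is also $O(a)$ and $\mathcal{N}$ contains terms with Lipschitz constants of order $1/a$ (because gaps $x_{i+1}-x_i$ are of size $a$), a direct Gronwall estimate will not close. To circumvent this I would construct an auxiliary discrete profile $\tilde\phi_N = \phi_N + a\,\psi_N + a^2\,\chi_N$, whose corrector fields $\psi,\chi$ solve the linearized $\phi$-PDE driven by the leading terms in the residual expansion. Their solvability and regularity rely on the stability of the linearized $\phi$-equation announced after Theorem \ref{local_h} and on taking the regularity index $m$ sufficiently large, which keeps under control all spatial derivatives of $\phi$ that appear in the Taylor expansion. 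By design, the new residual $\tilde R(t)$ obtained by evaluating the ODE operator at $\tilde\phi_N$ is $O(a^p)$ for some $p\geq 2$, while $\|\tilde\phi_N - \phi_N\|_{\ell^2}=O(a)$, so it suffices to prove $\|x - \tilde\phi_N\|_{\ell^2} = O(a^p)$.

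With $\tilde\phi_N$ in hand, I would run an a-priori bootstrap on $\tilde e := x - \tilde\phi_N$. Assuming $\|\tilde e(t)\|_{\ell^2}\leq a^{3/2}$ on an initial subinterval, the monotonicity result (Proposition \ref{prop625}) together with the lower bound \eqref{620_3} on $-h_x$ keeps every gap $x_{i+1}-x_i$ comparable to $a$, so the near-singular coefficients remain uniformly controlled. Testing $\dot{\tilde e} = A(t)\tilde e + \mathcal{N}(\tilde e) + \tilde R$ against $\tilde e$ in $\ell^2$, invoking dissipativity of $A(t)$ (the discrete analogue of the energy identity behind Theorem \ref{local_h}, based on negativity of the Hilbert-transform part and monotonicity of the repulsive nearest-neighbor terms), and estimating $\mathcal{N}(\tilde e)$ by the Hessian tensor bound, I would obtain $\frac{d}{dt}\|\tilde e\|_{\ell^2}^2 \leq C\|\tilde e\|_{\ell^2}^2 + C a^{2p}$. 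Gronwall on $[0,T_m]$ then yields $\|\tilde e\|_{\ell^2}\leq C a^p$, which closes the a-priori assumption for $N = 1/a$ large enough, and the triangle inequality produces \eqref{620main}.

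The main obstacle I expect is the uniform-in-$N$ dissipativity of the discrete linearized operator $A(t)$. It contains a long-range kernel $1/(x_j-x_i)$ summed over all indices (a discrete periodic Hilbert transform) together with short-range terms whose coefficients blow up like $1/a^2$ or $1/a^4$ individually. Recovering a clean dissipative estimate requires a summation-by-parts in the index $i$ carefully matched to the $\phi$-variable test, together with a quantitative bound on the antisymmetric part of the discrete Hilbert-transform piece that is independent of $a$. This is the discrete counterpart of the energy identity underlying the PDE theory of Section \ref{sec4}, and transplanting it to the lattice with the correct uniform constants is where the real technical difficulty lies.
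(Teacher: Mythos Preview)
Your overall architecture matches the paper's proof exactly: construct an auxiliary profile with improved consistency, run an $\ell^2$ energy estimate on the difference $x-\tilde\phi_N$ using the linearized-ODE stability together with an explicit Hessian bound on the ODE operator, close an a-priori bootstrap by Gr\"onwall, and finish with the triangle inequality. So the strategy is correct.

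The gap is quantitative, and with the numbers you propose the bootstrap does not close. You estimate the nonlinearity $\mathcal N$ as having Lipschitz constants of order $1/a$; in fact the near-diagonal entries of the Hessian $\partial_{ij}F_k$ scale like $a^{-4}$ (the ODE right-hand side carries a prefactor $1/a$ and involves $(x_{i+1}-x_i)^{-3}\sim a^{-3}$, so two derivatives cost $a^{-4}$), and the paper's careful count gives
\[
\max_k\Bigl(\sum_{i,j}(\partial_{ij}F_k)^2\Bigr)^{1/2}\le C\,a^{-5},
\]
so that the quadratic remainder obeys
\[
\langle \tilde e,\,\tilde e\,\nabla^2F_N(y)\,\tilde e^{\,T}\rangle
\le C\,a^{-6}\,\|\tilde e\|_{\ell^\infty}\,\|\tilde e\|_{\ell^2}^2.
\]
To absorb this into $C\|\tilde e\|_{\ell^2}^2$ you need $\|\tilde e\|_{\ell^\infty}\lesssim a^{6}$, not $a^{3/2}$. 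Consequently the paper pushes the auxiliary construction to $O(a^7)$ consistency (iterating the corrector step several times, not just $\psi$ and $\chi$), takes the a-priori assumption $\|x-y\|_{\ell^\infty}\le a^{6+1/3}$, obtains $\|x-y\|_{\ell^2}\le C a^7$ from Gr\"onwall, and then verifies the bootstrap via $\|\cdot\|_{\ell^\infty}\le a^{-1/2}\|\cdot\|_{\ell^2}\le C a^{6.5}\ll a^{6+1/3}$. With only $p=2$ or $p=3$ consistency and a bootstrap at $a^{3/2}$, the quadratic term is of size $a^{-6}\cdot a^{3/2}\cdot\|\tilde e\|_{\ell^2}^2$ and blows up.

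The issue you flag as the ``main obstacle'' (uniform-in-$N$ stability of the discrete linearization) is real and is handled in the paper as a separate proposition via summation by parts and discrete Hilbert-transform bounds mirroring the continuum argument; but it is not where your proof would fail. The failure is the underestimate of the Hessian scaling and the correspondingly insufficient order of the asymptotic correctors.
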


Several remarks of the main result are in order.

\begin{rem}
In fact, we can achieve a better convergence rate $O(a^2)$, if $f_i$ is modified to be
\begin{equation*}
\begin{aligned}
\widetilde{f}_i:=&-\frac{2}{L}\sum_{j\neq i}\frac{a}{x_j-x_i}+\Big(1-\frac{a}{2}\Big)\biggl(\frac{1}{x_{i+1}-x_i}-\frac{1}{x_{i}-x_{i-1}}\biggr)
+\biggl(\frac{a^2}{(x_{i+1}-x_i)^3}-\frac{a^2}{(x_i-x_{i-1})^3}\biggr).
\end{aligned}
\end{equation*}
Compared with \eqref{622_1}, the coefficient of the second term is
changed from $1$ to $1-\frac{a}{2}$. This is done to better correct
the error from the discretization of the Hilbert transform as $a \to
0$ (recall the second term in \eqref{622_1} is introduced to correct
the singularity from the first term). In fact, by Lemma \ref{l4.1}, we
know the leading error
$\frac{a}{2}\frac{\phi_{\alpha\alpha}}{\phi_\alpha^2}$ in Lemma
\ref{l4.2} can be removed by such a correction term. Hence we can get
$O(a^2)$ consistency in Section \ref{consis}, and consequently, the
convergence rate can be improved to $O(a^2)$ in Theorem \ref{thm_main}
for the modified microscopic model.
\end{rem}

\begin{rem}
  Theorem~\ref{thm_main} is a result of local convergence to strong
  solutions to the PDE. The global convergence of the ODE system to
  the (weak) global-in-time solution to the PDE \eqref{5} is more
  challenging and will be left for the future. We hope the additional
  understanding of the variational structures of the PDE \eqref{5}
  provided in this work would help the future investigation on global
  convergence.
\end{rem}

\begin{rem}
  To avoid unnecessary technical complications and to make the
  presentation of the convergence result clear, in this work we do not
  try to optimize the initial regularity that is needed in the
  Theorem~\ref{thm_main}. We just set $m$ to be large enough, so that
  we may assume sufficient regularity of the solution.
\end{rem}

While a comprehensive review of the vast literature of crystal growth
is beyond the scope of this work, let us review here some related
works mostly in the mathematical literature.  Besides the work of
\cite{Xiang2002}, the derivation of the continuum limit of BCF models
have also been considered in other works, see e.g.,
\cite{Tang1997,Yip2001,Shenoy2002,Margetis2011}. However, as far as we
know, the derivation has not been done on the rigorous level and
moreover, the convergence rate is provided here, which seems to be
missing before in the literature. {  The idea using step location for
formal asymptotic analysis was inspired by \cite{Xiang2002}. In order to
get the convergence rate rigorously, we find it is better to first study
the continuum PDE for the inverse function $\phi$, instead of the height $h$.} Recently, in the
attachment-detachment-limited (ADL) regime, \textsc{Al Hajj Shehadeh, Kohn and
  Weare} \cite{She2011} studied the continuum limit of self-similar
solution and obtained the convergence rate. Related to the stability
analysis, the linear stability of thin film (known as the ATG
instability) has been analyzed in previous works, see e.g.,
\cite{Xiang2004,Grin1986,Sro1989}. While we consider here the one
spatial dimensional models, the asymptotic derivation of two
dimensional continuum models have been considered in \textsc{Margetis
  and Kohn} \cite{Margetis2006} and \textsc{Xu and Xiang}
\cite{Xiang2009}, the rigorous aspects of these results will be
interesting future research directions.

For the discrete BCF model considered in \cite{Xiang2002}, very
recently, \textsc{Luo, Xiang and Yip} \cite{Luo2016} rigorously proved
the step bunch phenomenon, which characterized the limiting behavior of
the system as $t \to \infty$. They have also connected the step
bunching with continuum models through a $\Gamma$-convergence argument
\cite{Luoinprep}.  These works motivate further study of the
continuum limit of mesoscopic models of crystal
growth.

Let us also mention that while our starting point is step flow models,
the derivation of the continuum limit can be also considered starting
from a more atomistic description, such as a kinetic Monte Carlo type
model. See the works \cite{Guo:88, Yau:91, FunakiSpohn:97,
  Nishikawa:02} and more recently \cite{MarzuolaWeare2013}. See also a
recent work that aims to derive BCF type models from a kinetic Monte
Carlo lattice model \cite{LuLiuMargetis:15}.

The rest of this paper is organized as follows. In Section \ref{sec2},
after setting up some notations, we introduce four equivalent forms of
continuum PDE \eqref{5} and their variational structures. Section
\ref{sec4} is devoted to establish the existence, uniqueness and
stability for local strong solution of the PDE. We then introduce the
modified step-flow ODE in Section \ref{sec5}, and state the global
existence result for the modified ODE system.  Section \ref{consis} is
devoted to prove the consistency result for ODE system and its
continuum limit PDE. Finally, by constructing an auxiliary solution
with high-order consistency, we obtain the convergence rate of the
modified ODE to its continuum PDE limit in Section \ref{sec7}, which
completes the proof of our main result Theorem \ref{thm_main}.

\section{The continuum model}\label{sec2}

In this section, we discuss the properties of the continuum model.
Besides using the height profile $h$, it would be useful to rewrite
the dynamics in a few equivalent ways. Let us introduce the following definitions
\begin{itemize}
\item step location $\phi(\alpha, t)$, the inverse function of $h$:
  \begin{equation*}
    \alpha = h(\phi(\alpha, t), t), \quad \forall\, \alpha;
  \end{equation*}
\item step density $\rho(x, t)$, the (negative) gradient of $h$:
  \begin{equation}
    \rho(x, t) = - h_x(x, t);
  \end{equation}
\end{itemize}

\begin{itemize}
\item $u(x,t)$, the (negative) anti-derivative of $h$:
  \begin{equation}
    h(x, t) = - u_x(x, t)-bx-k_0,
  \end{equation}
\end{itemize}
where $b,\,k_0$ are constants chosen to guarantee the periodicity of $u_x$.

Now we establish the variational structures for $h,\,u,\,\rho,\,\phi$. In Section \ref{sec4}, it will be convenient to use $\rho$-equation and $u$-equation, while it will be proper to use $\phi$-equation when studying the continuum limit in Section \ref{sec5}, \ref{consis}, \ref{sec7}.

\subsection{Equation for height profile $h$}

Let us consider the PDE for the height profile

\begin{equation*}
  h_t=  \Bigl(-\lpi H(h_x)+\big(3h_x+\frac{1}{h_x}\big)h_{xx}\Bigr)_{xx}.
\end{equation*}
As mentioned in Introduction, the coefficients here are
independent of $a$. In
Section~\ref{consis}, we will show that this continuum PDE can be
derived as the limit of a BCF type discrete atomistic model.

%

First we observe that the evolution equation \eqref{5} has a
variational structure.  Define the total energy $\Eh$ as a functional
of $h$:
\begin{equation}
\Eh(h):=\int_{0}^{L}\biggl(\frac{1}{L}\int_{0}^{L}\ln \bigl\lvert \sin(\frac{\pi}{L}(x-y)) \bigr\rvert h_x h_y \ud y -h_x\ln(-h_x)-\frac{h_x^3}{2}\biggr) \ud x.
\end{equation}
Then we have
\begin{equation}\label{h}
h_t=\mu_{xx}=\biggl(\frac{\delta \Eh}{\delta h}\biggr)_{xx},
\end{equation}
where the chemical potential $\mu$ is given by
\begin{equation}
  \mu := \frac{\delta \Eh}{\delta h} = -\PV\int_0^L \frac{2\pi}{L^2}\cot\frac{\pi(x-y)}{L}h_y(y) \ud y+\frac{h_{xx}}{h_x}+3h_x h_{xx}.
\end{equation}
To see this, let us calculate in Lemma~\ref{l3.0} the functional
derivative $\frac{\delta \Eh^0}{\delta h}$ for
\begin{equation}
  \Eh^0(h):=\int_0^L\int_0^L\ln\big\lvert\sin \frac{\pi(x-y)}{L}\big\rvert h_x h_y \ud x \ud y.
\end{equation}
The derivative of the other two terms in $E_h$ is straightforward.

\begin{lem}\label{l3.0}
Assume $h(x)\in C^2([0,L])$.
We have
$$\frac{\delta \Eh^0}{\delta h}=-\PV\int_0^L\frac{2\pi}{L}\cot\frac{\pi(x-y)}{L} h_y(y) \ud y .$$
\end{lem}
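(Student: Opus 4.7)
My plan is to compute the first variation of $\Eh^0$ directly along an admissible smooth perturbation, exploit the symmetry of the kernel to halve the work, and then integrate by parts in one variable, taking care of the singularity of $\cot(\pi(x-y)/L)$ via a standard principal-value regularization. Since admissible $h$ satisfy $h(x+L)-h(x)=-1$, the test functions $\eta$ should be $L$-periodic and $C^2$.

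First I would write
\begin{equation*}
\frac{d}{d\tau}\Big|_{\tau=0}\Eh^0(h+\tau\eta) = \int_0^L\int_0^L \ln\bigl|\sin\tfrac{\pi(x-y)}{L}\bigr|\bigl(\eta_x(x) h_y(y)+h_x(x)\eta_y(y)\bigr)\ud x \ud y,
\end{equation*}
and observe that the kernel is symmetric in $x\leftrightarrow y$, so swapping variables in the second term collapses the expression to $2\int_0^L\int_0^L \ln|\sin(\pi(x-y)/L)|\,\eta_x(x)\,h_y(y)\ud x\ud y$. This symmetry step is the cleanest way to avoid integrating by parts on both sides simultaneously.

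Next I would integrate by parts in $x$ for each fixed $y$, but on the truncated set $\{x\in[0,L]:|x-y|>\epsilon\}$, which removes the singularity. The boundary contributions at $x=0$ and $x=L$ cancel because $\eta$ is $L$-periodic and $|\sin(\pi(-y)/L)|=|\sin(\pi(L-y)/L)|$. The contributions at $x=y\pm\epsilon$ have the form $\ln|\sin(\pi\epsilon/L)|\bigl(\eta(y+\epsilon)-\eta(y-\epsilon)\bigr)$, which is $O(\epsilon\log\epsilon)\to 0$ since $\eta\in C^1$. Passing $\epsilon\to 0$, the left-hand side converges to the full (convergent) integral by dominated convergence, while the right-hand side produces, by definition, the principal-value integral of $\frac{\pi}{L}\cot(\pi(x-y)/L)\,\eta(x)$.

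After switching the order of integration — justified on the truncated set by Fubini and inherited in the limit — I obtain
\begin{equation*}
\frac{d}{d\tau}\Big|_{\tau=0}\Eh^0(h+\tau\eta) = -\int_0^L \eta(x)\biggl(\PV\int_0^L \tfrac{2\pi}{L}\cot\tfrac{\pi(x-y)}{L}\,h_y(y)\ud y\biggr)\ud x,
\end{equation*}
and the formula for $\delta\Eh^0/\delta h$ follows. The only delicate step is the handling of the principal value, but the $C^2$ regularity of $h$ and smoothness of $\eta$ make the limit argument routine; the symmetry of the kernel is what keeps the computation short and ensures that no ill-defined double principal value appears.
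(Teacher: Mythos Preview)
Your proof is correct and follows essentially the same route as the paper's: excise the strip $|x-y|<\epsilon$, use the symmetry of $\ln|\sin(\pi(x-y)/L)|$ to double one cross term, integrate by parts, and control the singular contributions. The only organizational difference is that the paper frames the passage to the limit as an interchange of $\lim_{\delta\to 0}$ with $\tfrac{d}{d\varepsilon}$ and justifies it by integrating by parts in $y$ on the small interval $[x-\delta,x+\delta]$ (invoking $h\in C^2$ through $h_{yy}$), whereas you integrate by parts in $x$ on the complementary set and estimate the boundary terms at $x=y\pm\epsilon$ directly via $\eta\in C^1$.
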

\begin{proof}
First denote
$$\Eh^\delta(h):=\int_0^L\bigg(\int_0^{x-\delta}+\int_{x+\delta}^L\bigg)\ln\big\lvert\sin \frac{\pi(x-y)}{L}\big\rvert h_x h_y \ud y \ud x .$$
By the definition of the principal value integral, we have
$$\frac{\ud }{\ud \varepsilon}\biggr|_{\varepsilon=0}\Eh^0(h+\varepsilon\tilde{h})
=\frac{\ud }{\ud
  \varepsilon}\biggr|_{\varepsilon=0}\lim_{\delta\rightarrow
  0^+}\Eh^\delta(h+\varepsilon\tilde{h}),$$ and since $\ln \lvert\sin x
\rvert $ is even, we have
\begin{equation}\label{t6.13_1}
  \lim_{\delta\rightarrow 0^+}\frac{\ud }{\ud \varepsilon}\biggr|_{\varepsilon=0}\Eh^\delta(h+\varepsilon\tilde{h})
  =\lim_{\delta\rightarrow 0^+}
  \int_0^L\bigg(\int_0^{x-\delta}+\int_{x+\delta}^L\bigg)\frac{-2\pi}{L}\cot\frac{\pi(x-y)}{L}h_y(y)  \tilde{h}(x) \ud y\ud x.
\end{equation}

Now we claim
$$\frac{\ud }{\ud \varepsilon}\biggr|_{\varepsilon=0}\lim_{\delta\rightarrow 0^+}\Eh^\delta(h+\varepsilon\tilde{h})
=\lim_{\delta\rightarrow 0^+}\frac{\ud }{\ud
  \varepsilon}\biggr|_{\varepsilon=0}\Eh^\delta(h+\varepsilon\tilde{h}).$$
Obviously, $\Eh^{\delta}(h+\varepsilon\tilde{h})$ is continuous
respect to $\delta$. It suffices to show that $\bigl .\frac{\ud }{\ud
  \varepsilon}\bigr|_{\varepsilon=0}\Eh^\delta(h+\varepsilon\tilde{h})$
is also continuous respect to $\delta$.  Hence, from \eqref{t6.13_1}, it suffices to prove
$$\lim_{\delta\rightarrow 0^+}\int_0^L\int_{x-\delta}^{x+\delta}
\frac{\pi}{L}\cot\frac{\pi(x-y)}{L}h_y(y)  \tilde{h}(x) \ud y \ud x =0.$$
Indeed
\begin{equation*}
\begin{aligned}
\int_0^L\int_{x-\delta}^{x+\delta}
\frac{\pi}{L}\cot\frac{\pi(x-y)}{L}h_y(y) \tilde{h}(x) \ud y  \ud x
=&\int_0^L -\ln\big\lvert\sin \frac{\pi(x-y)}{L}\big\rvert h_y(y) \biggr|_{y=x-\delta}^{x+\delta} \tilde{h}(x)\ud x \\
&+\int_0^L\int_{x-\delta}^{x+\delta}\ln\big\lvert\sin \frac{\pi(x-y)}{L}\big\rvert h_{yy}(y)\tilde{h}(x) \ud y  \ud x.
\end{aligned}
\end{equation*}
Notice that $h(x)\in C^2([0,L])$. Let $\delta\rightarrow 0.$ The first
term tends to zero by Taylor expansion, and the second term tends to
zero as the integrand is integrable.
\end{proof}

Note that the energy $\Eh$ we use here has a slightly different form compared to the one in \cite{Xiang2002}, denoted by $\bar{\Eh}(h)$, which reads in the periodic setting as
\begin{equation}
\bar{\Eh}(h)= \int_{0}^{L}\biggl(-\frac{\pi}{L}\big(h+\frac{x}{L}\big)H(h_x)-h_x\ln(-h_x)-\frac{h_x^3}{2}\biggr)\ud x.
\end{equation}
In fact, the two energy functionals only differ by a null Lagrangian, as we show below, so we prefer the more symmetric expression $E_h$.

\begin{lem}
Let
\begin{equation}\label{eq:W}
    W(h):=\frac{1}{L^2}\int_0^L\int_0^L\ln\big\lvert\sin  \frac{\pi(x-y)}{L}|h_y\ud x\ud y.
\end{equation}
Then we have
$$\Eh(h)=\bar{\Eh}(h)+W(h),$$
and
$$\frac{\delta \Eh}{\delta h}=\frac{\delta \bar{\Eh}}{\delta h}.$$
\end{lem}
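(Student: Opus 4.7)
The plan rests on the observation that $W(h)$ is effectively a constant on the admissible class of quasi-periodic heights. By Fubini I can do the inner $x$-integral in $W(h)$ first: after the shift $s=x-y$ and using $L$-periodicity of $\ln|\sin(\pi\cdot/L)|$, it equals the constant $\int_{0}^{L}\ln|\sin(\pi s/L)|\ud s=-L\ln 2$, independently of $y$. Hence
\begin{equation*}
  W(h) = -\tfrac{\ln 2}{L}\int_{0}^{L}h_{y}\ud y = \tfrac{\ln 2}{L}
\end{equation*}
by \eqref{2}, which is independent of $h$ on the admissible class. Since any admissible perturbation $\tilde h$ is $L$-periodic, $\int_{0}^{L}\tilde h_{y}\ud y=0$, giving $\frac{d}{d\varepsilon}\big|_{\varepsilon=0}W(h+\varepsilon\tilde h)=0$ and thus $\tfrac{\delta W}{\delta h}=0$. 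Granting the decomposition $\Eh=\bar\Eh+W$, the second identity $\tfrac{\delta \Eh}{\delta h}=\tfrac{\delta \bar\Eh}{\delta h}$ follows immediately.

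\textbf{Integration by parts for the decomposition.} Only the nonlocal terms differ, so it suffices to show that
\begin{equation*}
  I := \tfrac{1}{L}\int_{0}^{L}\!\!\int_{0}^{L}\ln\big|\sin\tfrac{\pi(x-y)}{L}\big|h_{x}h_{y}\ud y\ud x
\end{equation*}
equals $W(h)$ plus the Hilbert-transform piece of $\bar\Eh$. Rewriting the Hilbert transform as $H(h_{x})(x)=\tfrac{1}{L}\PV\int_{0}^{L}h_{y}(y)\cot\tfrac{\pi(x-y)}{L}\ud y$, I would integrate by parts in $x$ inside $I$ using $\partial_{x}\ln|\sin\tfrac{\pi(x-y)}{L}|=\tfrac{\pi}{L}\cot\tfrac{\pi(x-y)}{L}$, with the same $\delta$-regularization of the principal value as in Lemma \ref{l3.0}. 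The quasi-periodicity $h(L)-h(0)=-1$ together with $|\sin(\pi-\theta)|=|\sin\theta|$ yields a boundary single-integral $A:=-\tfrac{1}{L}\int_{0}^{L}h_{y}(y)\ln|\sin\tfrac{\pi y}{L}|\ud y$, while the resulting cotangent double integral $\tfrac{\pi}{L^{2}}\iint h_{x}(x)h(y)\cot\tfrac{\pi(x-y)}{L}\ud y\ud x$ cancels, after swapping $x\leftrightarrow y$ and using antisymmetry of $\cot\tfrac{\pi(x-y)}{L}$, the $h(x)H(h_{x})(x)$ piece of $\bar\Eh$. The remaining $\tfrac{x}{L}H(h_{x})(x)$ term reduces to $\tfrac{\pi}{L^{3}}\int_{0}^{L} h_{y}(y)\bigl(\int_{0}^{L}x\cot\tfrac{\pi(x-y)}{L}\ud x\bigr)\ud y$; evaluating the inner $x$-integral via substitution $z=x-y$ and one further IBP (using $\int\cot(\tfrac{\pi z}{L})\ud z=\tfrac{L}{\pi}\ln|\sin\tfrac{\pi z}{L}|$ and $\int_{0}^{L}\ln|\sin\tfrac{\pi z}{L}|\ud z=-L\ln 2$) produces $\tfrac{L^{2}}{\pi}\bigl(\ln|\sin\tfrac{\pi y}{L}|+\ln 2\bigr)$. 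The $\ln|\sin\tfrac{\pi y}{L}|$ contribution combines with $A$ to cancel, and the constant $\ln 2$ contribution reproduces $W(h)$ in view of \eqref{2} and Step 1.

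\textbf{Main obstacle.} The only genuine subtlety is the consistent treatment of the principal value under the integration by parts in $I$; this mirrors the excision-then-limit argument already used in Lemma \ref{l3.0} (excise a $\delta$-neighborhood of the diagonal $x=y$, move the derivative, and let $\delta\to 0^{+}$). Once this is in place, everything else is elementary bookkeeping of boundary contributions generated by the quasi-periodicity of $h$.
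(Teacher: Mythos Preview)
Your argument is correct and proceeds along a genuinely different path from the paper's.

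For the null-Lagrangian claim $\tfrac{\delta W}{\delta h}=0$, your observation is cleaner than the paper's: you evaluate the inner $x$-integral to the constant $-L\ln 2$ and conclude $W(h)=\tfrac{\ln 2}{L}$ is a numerical constant on the quasi-periodic class, so its variation trivially vanishes. The paper instead computes the variation of $W$ directly, integrates by parts in $y$, and uses that the principal-value integral $\PV\int_0^L\cot\tfrac{\pi(x-y)}{L}\,dx$ vanishes. Your route is shorter and more transparent.

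For the decomposition $\Eh=\bar\Eh\pm W$, the paper takes the opposite starting point: it begins with the nonlocal term of $\bar\Eh$ and integrates by parts in $x$ against the factor $h+\tfrac{x}{L}$, which is $L$-periodic, so the boundary terms vanish in one stroke and the identity drops out immediately. You instead start from the $\Eh$ side, integrate by parts in $x$ against $h$ alone (which is \emph{not} periodic), and must then track the boundary contribution $A$ and separately evaluate the $\tfrac{x}{L}H(h_x)$ piece of $\bar\Eh$ via a second integration by parts. Both computations are sound, but the paper's trick of packaging $h+\tfrac{x}{L}$ as a periodic object avoids all the bookkeeping you go through. A small point: in your sketch the resulting cotangent double integral carries $h(x)h_y(y)$ after the IBP, not $h_x(x)h(y)$; this does not affect the outcome, since it matches the $h\,H(h_x)$ piece of $\bar\Eh$ directly (no swap needed).
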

\begin{proof}
First by the definition of the periodic Hilbert transform,
\begin{equation*}
  \bar{\Eh}(h)  = \int_{0}^{L}\biggl(-\frac{\pi}{L^2}\big(h+\frac{x}{L}\big)\PV\int_0^L \cot\frac{\pi(x-y)}{L}h_y \ud y-h_x\ln(-h_x)-\frac{h_x^3}{2}\biggr)\ud x.
\end{equation*}
Notice that
\begin{equation*}
\begin{aligned}
&\int_{0}^{L}\biggl(-\frac{\pi}{L^2}\big(h+\frac{x}{L}\big)\PV\int_0^L \cot\frac{\pi(x-y)}{L}h_y \ud y\biggr)\ud x\\
&=-\frac{1}{L}\int_0^L \biggl((h+\frac{x}{L})\ln\big\lvert\sin\frac{\pi(x-y)}{L}\big\rvert\biggr\rvert_0^L-PV\int_0^L\bigl(h_x+\frac{1}{L}\bigr)\ln\big\lvert\sin \frac{\pi(x-y)}{L}\big\rvert \ud x\biggr)h_y \ud y\\
&=\frac{1}{L}\int_0^L\int_0^L\ln\big\lvert\sin \frac{\pi(x-y)}{L}\big\rvert h_xh_y\ud x\ud y+\frac{1}{L^2}\int_0^L\int_0^L\ln\big\lvert\sin \frac{\pi(x-y)}{L}\big\rvert h_y\ud x\ud y,
\end{aligned}
\end{equation*}
where we have used that $h+\frac{x}{L}$ is $L$-periodic function. Therefore, for $W$ defined in \eqref{eq:W}, we get
\begin{equation*}
  \Eh(h)=\bar{\Eh}(h)+W(h).
\end{equation*}

Similar to the proof of Lemma \ref{l3.0}, we can see
\begin{align*}
\langle\frac{\delta W}{\delta h},\tilde{h}\rangle&=\frac{1}{L^2}\int_0^L\int_0^L\ln\big\lvert\sin  \frac{\pi(x-y)}{L}|\ud x \tilde{h}_y \ud y,\\
&=\frac{1}{L^2}\int_0^L \tilde{h} \ln\big\lvert\sin\frac{\pi(x-y)}{L}\big\rvert\biggr\rvert_0^L \ud x-\int_0^L \PV\int_0^L \frac{2\pi}{L^2}\cot\frac{\pi(x-y)}{L} \ud x \tilde{h}(y) \ud y\\
&=0.
\end{align*}
Hence $W(h)$ is a null lagrangian.
\end{proof}

\subsection{Equation for step location function $\phi$}

Consider the step location function $\phi$, which defined in \eqref{phi} as the inverse function of $h$. From the definition, we have
\begin{equation}\label{fan}
\begin{aligned}
  \phi_t=-\frac{h_t}{h_x},\qquad 1=h_x \phi_{\alpha},\qquad
  h_{xx}=-\frac{\phi_{\alpha\alpha}}{\phi_\alpha^3}.
\end{aligned}
\end{equation}
Then changing variable from $h$ to $\phi$ in \eqref{h}, we have
\begin{equation}\label{phi2}
  \phi_t=-\phi_\alpha \mu_{xx}=-\partial_\alpha\biggl(\frac{1}{\phi_\alpha} \mu_{\alpha}\biggr),
\end{equation}
due to \eqref{fan} and the chain rule
$\mu_x=\mu_{\alpha}\frac{1}{\phi_\alpha}$. Note that this immediately implies that $\int_0^1 \phi \ud \alpha$ is a constant of motion.

The equation of $\phi$ \eqref{phi2} also has a variational structure. To this end, let us rewrite the energy in terms of $\phi$ such that $\Ephi(\phi)=\Eh(h)$:
\begin{equation}
\Ephi(\phi)=  \int_{0}^{1}\biggl(\frac{1}{L}\int_0^1\ln\bigl\lvert\sin\frac{\pi(\phi(\alpha)-\phi(\beta))}{L}\bigr\rvert \ud\beta-{\ln(-\phi_\alpha)}+\frac{1}{2\phi_\alpha^2}\biggr) \ud\alpha.
\end{equation}
We will show that
\begin{equation}\label{phi3}
  \phi_t=-\phi_\alpha \mu_{xx}=-\partial_\alpha\biggl(\frac{1}{\phi_\alpha}\Big(\frac{\delta \Ephi}{\delta \phi}\Big)_\alpha\biggr).
\end{equation}

Similar to the proof of Lemma \ref{l3.0}, let us first calculate
$\frac{\delta \Ephi^0}{\delta \phi}$,
where
$$\Ephi^0(\phi):=\int_0^1\int_0^1\ln\big\lvert\sin
\frac{\pi(\phi(\alpha)-\phi(\beta))}{L}\big\rvert \ud\alpha \ud
\beta.$$

\begin{lem}\label{l3.1}
Assume $h(x)\in C^2([0,L])$ and there exists a constant $C>0$ such that $|h_x|\geq C$. We have
$$\frac{\delta \Ephi^0}{\delta \phi}=\PV\int_0^1\frac{2\pi}{L}\cot\frac{\pi(\phi(\alpha)-\phi(\beta))}{L} \ud \beta .$$
\end{lem}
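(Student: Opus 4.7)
The plan is to mirror the proof of Lemma~\ref{l3.0} with the obvious adaptations for the step-location variable $\phi$. First, I would regularize away the diagonal singularity by defining
\begin{equation*}
E_\phi^\delta(\phi) := \int_0^1 \biggl(\int_0^{\alpha-\delta} + \int_{\alpha+\delta}^1\biggr) \ln\Bigl\lvert\sin\tfrac{\pi(\phi(\alpha)-\phi(\beta))}{L}\Bigr\rvert \ud\beta \ud\alpha,
\end{equation*}
and compute the Gateaux derivative in the direction of a smooth test function $\tilde\phi$ by differentiating under the integral sign. Using the fact that the integrand is symmetric under $(\alpha,\beta)\leftrightarrow(\beta,\alpha)$ while $\cot$ is odd, the derivative simplifies to
\begin{equation*}
\frac{\ud}{\ud\varepsilon}\biggr|_{\varepsilon=0} E_\phi^\delta(\phi+\varepsilon\tilde\phi) = \int_0^1\biggl(\int_0^{\alpha-\delta}+\int_{\alpha+\delta}^1\biggr)\frac{2\pi}{L}\cot\tfrac{\pi(\phi(\alpha)-\phi(\beta))}{L}\, \tilde\phi(\alpha)\ud\beta\ud\alpha.
\end{equation*}

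Next I would argue, exactly as in Lemma~\ref{l3.0}, that one may exchange the limit $\delta\to 0^+$ with $\frac{\ud}{\ud\varepsilon}|_{\varepsilon=0}$. Since $E_\phi^\delta(\phi+\varepsilon\tilde\phi)$ is manifestly continuous in $\delta$, it suffices to prove that the displayed derivative above is also continuous in $\delta$; equivalently, that the ``near-diagonal'' contribution
\begin{equation*}
I(\delta):=\int_0^1\int_{\alpha-\delta}^{\alpha+\delta}\frac{\pi}{L}\cot\tfrac{\pi(\phi(\alpha)-\phi(\beta))}{L}\bigl(\tilde\phi(\alpha)-\tilde\phi(\beta)\bigr) \ud\beta\ud\alpha
\end{equation*}
vanishes as $\delta\to 0^+$ (the alternative form, obtained by integration by parts as in Lemma~\ref{l3.0}, would also work).

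The main obstacle, and the key place where the hypotheses $h\in C^2$ and $|h_x|\geq C>0$ are used, is the estimate on $I(\delta)$. The hypothesis gives $\phi\in C^2$ with $|\phi_\alpha|$ bounded above and below away from zero. By the mean value theorem,
\begin{equation*}
\bigl\lvert \phi(\alpha)-\phi(\beta)\bigr\rvert \geq c_0\lvert\alpha-\beta\rvert, \qquad \bigl\lvert\tilde\phi(\alpha)-\tilde\phi(\beta)\bigr\rvert\leq \|\tilde\phi_\alpha\|_\infty \lvert\alpha-\beta\rvert,
\end{equation*}
for some $c_0>0$. Since $\cot(\pi s/L)=O(1/s)$ near $s=0$, the integrand of $I(\delta)$ is uniformly bounded on the set $\{\lvert\alpha-\beta\rvert<\delta\}$, whose measure is $O(\delta)$; hence $I(\delta)\to 0$.

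Finally, sending $\delta\to 0^+$ in the exchanged-order identity yields
\begin{equation*}
\frac{\ud}{\ud\varepsilon}\biggr|_{\varepsilon=0} E_\phi^0(\phi+\varepsilon\tilde\phi) = \int_0^1 \tilde\phi(\alpha)\,\PV\int_0^1 \frac{2\pi}{L}\cot\tfrac{\pi(\phi(\alpha)-\phi(\beta))}{L}\ud\beta \ud\alpha,
\end{equation*}
from which the claimed formula for $\delta E_\phi^0/\delta\phi$ follows by definition of the functional derivative. The only non-routine step in the whole argument is the diagonal estimate on $I(\delta)$, which is where the quasi-isometry property of $\phi$ supplied by $|h_x|\geq C>0$ is essential; everything else is a direct translation of the $h$-variable proof.
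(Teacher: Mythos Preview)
Your proof is correct and follows the same overall scheme as the paper: regularize by excising a $\delta$-neighborhood of the diagonal, compute the Gateaux derivative of $E_\phi^\delta$, then justify interchanging $\lim_{\delta\to 0^+}$ with $\frac{\ud}{\ud\varepsilon}\big|_{\varepsilon=0}$ by showing the near-diagonal contribution vanishes.

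The one genuine difference is in how you handle that near-diagonal piece. The paper works with the post-symmetrized integrand $\cot\bigl(\tfrac{\pi}{L}(\phi(\alpha)-\phi(\beta))\bigr)\,\tilde{\phi}(\alpha)$, which is singular on the diagonal, and kills the singularity by integrating by parts in $\alpha$ against $\frac{\partial}{\partial\alpha}\ln\bigl|\sin\tfrac{\pi}{L}(\phi(\alpha)-\phi(\beta))\bigr|$; this produces a boundary term controlled by a Taylor expansion and a bulk term with the integrable $\ln|\sin|$ kernel times $\bigl(\tilde{\phi}/\phi_\alpha\bigr)_\alpha$. Boundedness of the latter is exactly where the paper invokes $h\in C^2$ and $|h_x|\ge C>0$ (to control $\phi_{\alpha\alpha}$). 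Your route instead keeps the pre-symmetrized integrand $\cot(\cdot)\bigl(\tilde{\phi}(\alpha)-\tilde{\phi}(\beta)\bigr)$, which is uniformly bounded on the strip $|\alpha-\beta|<\delta$ once you know $|\phi(\alpha)-\phi(\beta)|\gtrsim|\alpha-\beta|$; the estimate $I(\delta)=O(\delta)$ is then immediate. This is cleaner and avoids the integration by parts entirely. One small correction: the lower bound $|\phi(\alpha)-\phi(\beta)|\ge c_0|\alpha-\beta|$ you use comes from $|\phi_\alpha|\ge c_0$, which follows already from $h\in C^1$ on a compact interval; the hypothesis $|h_x|\ge C>0$ is what gives the \emph{upper} bound on $|\phi_\alpha|$ (and hence $\phi\in C^2$), which you need elsewhere but not for the $I(\delta)$ estimate itself.
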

\begin{proof}
First denote
$$\Ephi^\delta(\phi):=\int_0^1\bigg(\int_0^{\beta-\delta}+\int_{\beta+\delta}^1\bigg)\ln\big\lvert\sin \frac{\pi(\phi(\alpha)-\phi(\beta))}{L}\big\rvert \ud\alpha \ud \beta .$$
It is obvious to see that
$$\frac{\ud }{\ud \varepsilon}\biggr|_{\varepsilon=0}\Ephi^0(\phi+\varepsilon\tilde{\phi})
=\frac{\ud }{\ud \varepsilon}\biggr|_{\varepsilon=0}\lim_{\delta\rightarrow 0}\Ephi^\delta(\phi+\varepsilon\tilde{\phi}),$$
and
$$\frac{\ud }{\ud \varepsilon}\biggr|_{\varepsilon=0}\Ephi^\delta(\phi+\varepsilon\tilde{\phi})
=\int_0^1\bigg(\int_0^{\beta-\delta}+\int_{\beta+\delta}^1\bigg)\frac{\pi}{L}\cot\frac{\pi(\phi(\alpha)-\phi(\beta))}{L}(\tilde{\phi}(\alpha)-\tilde{\phi}(\beta))\ud\alpha \ud \beta .$$

Now we claim
$$\frac{\ud }{\ud \varepsilon}\biggr|_{\varepsilon=0}\lim_{\delta\rightarrow 0^+}\Ephi^\delta(\phi+\varepsilon\tilde{\phi})
=\lim_{\delta\rightarrow 0^+}\biggr
.\frac{\ud }{\ud \varepsilon}\biggr|_{\varepsilon=0}\Ephi^\delta(\phi+\varepsilon\tilde{\phi}).$$
Obviously, $\Ephi^{\delta}(\phi+\varepsilon\tilde{\phi})$ is continuous respect to $\delta$. It is sufficient to proof $\bigl
.\frac{\ud }{\ud \varepsilon}\bigr|_{\varepsilon=0}\Ephi^\delta(\phi+\varepsilon\tilde{\phi})$ is also continuous respect to $\delta$.
In fact, since $\cot x$ is odd,
$$\frac{\ud }{\ud \varepsilon}\biggr|_{\varepsilon=0}\Ephi^\delta(\phi+\varepsilon\tilde{\phi})=2\int_0^1\bigg(\int_0^{\beta-\delta}+\int_{\beta+\delta}^1\bigg)
\frac{\pi}{L}\cot\frac{\pi(\phi(\alpha)-\phi(\beta))}{L}\tilde{\phi}(\alpha)\ud\alpha \ud \beta .$$
Hence, it is sufficient to proof
$$\lim_{\delta\rightarrow 0^+}\int_0^1\int_{\beta-\delta}^{\beta+\delta}
\frac{\pi}{L}\cot\frac{\pi(\phi(\alpha)-\phi(\beta))}{L}\tilde{\phi}(\alpha)\ud\alpha \ud \beta =0.$$
In fact,
 \begin{equation*}
\begin{aligned}
&\int_0^1\int_{\beta-\delta}^{\beta+\delta}
\frac{\pi}{L}\cot\frac{\pi(\phi(\alpha)-\phi(\beta))}{L}\tilde{\phi}(\alpha)\ud\alpha \ud \beta \\
=&\int_0^1 \frac{\tilde{\phi}(\alpha)}{\phi_{\alpha}(\alpha)}\ln\big\lvert\sin \frac{\pi(\phi(\alpha)-\phi(\beta))}{L}\big\rvert \biggr|_{\alpha=\beta-\delta}^{\beta+\delta} \ud \beta \\
&-\int_0^1\int_{\beta-\delta}^{\beta+\delta}\ln\big\lvert\sin \frac{\pi(\phi(\alpha)-\phi(\beta))}{L}\big\rvert \biggl(\frac{\tilde{\phi}(\alpha)}{\phi_{\alpha}(\alpha)}\biggr)_{\alpha}\ud\alpha \ud \beta.
\end{aligned}
\end{equation*}
As $\delta\rightarrow 0,$ the first term tends to zero by Taylor
expansion.  $\bigl
\lvert\bigl(\frac{\tilde{\phi}(\alpha)}{\phi_{\alpha}(\alpha)}\bigr)_{\alpha}\bigr\rvert$
is bounded since $h(x)\in C^2([0,L])$ and $|h_x|\geq C>0$, so the
second term tends to zero as the integrand is integrable.
\end{proof}

Hence we have
\begin{equation}\label{Frac_phi}
\frac{\delta \Ephi}{\delta \phi}=\frac{2\pi}{L^2}\PV\int_0^1 \cot\frac{\pi(\phi(\alpha)-\phi(\beta))}{L}\ud \beta -\frac{\phi_{\alpha\alpha}}{\phi_{\alpha}^2}-3\frac{\phi_{\alpha\alpha}}{\phi_{\alpha}^4}.
\end{equation}


\smallskip

It remains to show that $\mu = \frac{\delta \Ephi}{\delta \phi}$,
\textit{i.e.},
$\frac{\delta \Ephi}{\delta \phi} = \frac{\delta \Eh}{\delta h}$.  For
$\tilde{\phi},\,\tilde{h}$ satisfying
$$\alpha=(h +\varepsilon \tilde{h})\circ(\phi+\varepsilon \tilde{\phi}),$$
Taylor expansion shows that
$$0=h_x \tilde{\phi}+\tilde{h}.$$
Thus by \eqref{fan},  we have
\begin{equation}\label{tilde}
\begin{aligned}
\tilde{\phi}=-\phi_{\alpha}\tilde{h},\\
\Ephi(\phi+\varepsilon \tilde{\phi})=\Eh(h+\varepsilon \tilde{h}).
\end{aligned}
\end{equation}
Hence
\begin{equation}\label{EF}
\begin{aligned}
&\frac{\ud }{\ud \varepsilon}\biggr|_{\varepsilon=0}\Ephi(\phi+\varepsilon \tilde{\phi})=D_\phi \Ephi \cdot \tilde{\phi}\\
=&\frac{\ud }{\ud \varepsilon}\biggr|_{\varepsilon=0}\Eh(h+\varepsilon \tilde{h})=D_h \Eh \cdot \tilde{h},
\end{aligned}
\end{equation}
where $D_h\Eh:L^2(\mathbb{R})\rightarrow L^2(\mathbb{R})$ is the Fr\'echet differential, i.e. $D_h \Eh \cdot \tilde{h}$ is the dual pair which means the first order variation of $\Eh$ at $h$ along the direction of $\tilde{h}$.

By Riesz representation theorem, there exists $\nabla_h \Eh\in L^2([0,L],\ud x)$, such that
$$ D_h \Eh \cdot \tilde{h}=\int_{0}^{L}\nabla_h \Eh \tilde{h} \ud x,$$
where $\nabla_{h} \Eh$ is gradient of $\Eh(h)$ in $L^2([0,L],\ud x)$, which is just what we denoted as  $\frac{\delta \Eh}{\delta h}$.

Similarly, there exists $\nabla_\phi \Ephi\in L^2([0,1],\vert\phi_\alpha\vert \ud\alpha)$, such that
$$ D_\phi \Ephi \cdot \tilde{\phi}=\int_{0}^{1}\nabla_\phi \Ephi \tilde{\phi} \vert\phi_\alpha\vert \ud\alpha=\int_{0}^{1}-\nabla_\phi \Ephi \tilde{\phi} \phi_\alpha \ud\alpha. $$
where $\nabla_{\phi} \Ephi$ is gradient of $\Ephi(\phi)$ in $L^2([0,1],\vert\phi_\alpha\vert \ud\alpha)$.

Combining \eqref{tilde} and \eqref{EF}, we get
$$\nabla_\phi \Ephi=-\frac{1}{\phi_\alpha}\nabla_h \Eh\circ \phi.$$
Again we define $\frac{\delta \Ephi}{\delta \phi}$ as gradient of $\Ephi(\phi)$ in $L^2([0,1],\ud\alpha)$. Noticing \eqref{tilde}, we have
\begin{equation*}
\begin{aligned}
&\biggl
.\frac{\ud }{\ud \varepsilon}\biggr|_{\varepsilon=0}\Ephi(\phi+\varepsilon \tilde{\phi})=\int_{0}^{1}\frac{\delta \Ephi}{\delta \phi}\tilde{\phi}\ud \alpha\\
=&\frac{\ud }{\ud \varepsilon}\biggr|_{\varepsilon=0}\Eh(h+\varepsilon \tilde{h})=\int_{0}^{L}\nabla_h \Eh\tilde{h}\ud x\\
=&\int_{0}^{1}\frac{\delta \Eh}{\delta h}\tilde{\phi}\ud\alpha.
\end{aligned}
\end{equation*}
Hence
$$\frac{\delta \Eh}{\delta h}\circ \phi=\frac{\delta \Ephi}{\delta \phi}\in L^2([0,1],\ud\alpha),$$
and
$$\mu=\frac{\delta \Eh}{\delta h}\circ \phi=\nabla_h \Eh\circ \phi=-\phi_\alpha \nabla_{\phi} \Ephi=\frac{\delta \Ephi}{\delta \phi}.$$

Therefore, we conclude that \eqref{phi2} is equivalent with \eqref{phi3}. Moreover, we obtain energy identity for \eqref{phi3} as
\begin{equation}\label{E3.8}
\frac{\ud \Ephi}{\ud t}=\int_{0}^{1}\frac{\delta \Ephi}{\delta \phi} \phi_t d \alpha=\int_{0}^{1}\frac{1}{\phi_\alpha}\biggl(\Big(\frac{\delta \Ephi}{\delta \phi}\Big)_\alpha\biggr)^2 \ud \alpha.
\end{equation}

\subsection{Equation for step density $\rho$}

Now consider the step density $\rho$. From the definition,
rewriting the energy in terms of $\rho$, we obtain
\begin{equation}\label{30Erho}
\Erho(\rho):=\int_{0}^{L}\biggl
(\frac{1}{L}\int_{0}^{L}\ln\big\lvert\sin  (\frac{\pi}{L}(x-y))\big\rvert\rho(x)\rho(y) \ud y +\rho(x)\ln\rho(x)+\frac{\rho(x)^3}{2}\biggr)\ud x,
\end{equation}
$$\frac{\delta \Erho}{\delta\rho}=\int_{0}^{L}\frac{2}{L}\ln\big\lvert\sin  (\frac{\pi}{L}(x-y))\big\rvert\rho(y) \ud y +\ln\rho(x)+1+\frac{3}{2}\rho(x)^2,$$
and
\begin{equation}\label{temp3.10}
\biggl
(\frac{\delta \Erho}{\delta \rho}\biggr)_x= \PV\int_0^L \frac{2\pi}{L^2}\cot\frac{\pi(x-y)}{L}\rho(y)\ud y+\frac{\rho_x}{\rho}+3\rho_x \rho=\mu.
\end{equation}

Similar to the proof of Lemma \ref{l3.0}, we can define
$$\PV\int_0^L  \cot\frac{\pi(x-y)}{L}\rho(y)\ud y=\lim_{\delta\rightarrow 0^+}\Big(\int_0^{x-\delta}+\int_{x+\delta}^L\Big) \cot\frac{\pi(x-y)}{L}\rho(y)\ud y.$$
Then
\begin{align*}
&\frac{\ud}{\ud x}\lim_{\delta\rightarrow 0^+}\Big(\int_0^{x-\delta}+\int_{x+\delta}^L\Big)\ln \big\lvert\sin\frac{\pi(x-y)}{L}\big\rvert\rho(y) \ud y\\
=& \lim_{\delta\rightarrow 0^+}\frac{\ud}{\ud x}\Big(\int_0^{x-\delta}+\int_{x+\delta}^L\Big)\ln \big\lvert\sin\frac{\pi(x-y)}{L}\big\rvert\rho(y) \ud y.
\end{align*}

Hence we also obtain a variational structure for $\rho$ and \eqref{h} becomes
\begin{equation}\label{rhoeq}
\rho_t=-\mu_{xxx}=-\biggl(\frac{\delta \Erho}{\delta \rho}\biggr)_{xxxx}.
\end{equation}
This also shows that $\int_0^L \rho \ud x$ is a constant of motion.

\subsection{Equation for $u$}

Finally, from definition of $u$, the energy can be rewritten in terms of $u$ as
\begin{equation}\label{30Eu}
\begin{aligned}
\Eu(u)&=\int_{0}^{L}\left(\frac{1}{L}\int_{0}^{L}\ln|\sin(\frac{\pi}{L}(x-y))|(u_{xx}+b)(u_{yy}+b) dy +(u_{xx}+b)\ln (u_{xx}+b)+\frac{(u_{xx}+b)^3}{2}\right)dx,\\
\frac{\delta \Eu}{\delta u}&=\frac{2\pi}{L} H(u_{xx})_x+\Bigl(\ln{(u_{xx}+b)}+\frac{3}{2}(u_{xx}+b)^2+1\Bigr)_{xx}=\mu_x.
\end{aligned}
\end{equation}
Hence we also obtain a variational structure for $u$ and \eqref{h} becomes
\begin{equation}\label{Du1}
u_t=-\frac{\delta \Eu}{\delta u}.
\end{equation}

%

\subsection{Equivalence of the formulations}

We end this section with the rigorous justification of the equivalence of the above formulations.

Recall the notations for $W^{k,p}_{\wps}(I)$, $W^{k,p}_{\per_0}(I)$ in \eqref{626_1} and \eqref{626_2}. If $k<0$ and $\frac{1}{p}+\frac{1}{q}=1$,
$W^{k,p}$ is the dual of $W^{-k,q}$.
Denote
$$\Phi(\xi):=\left\{
              \begin{array}{ll}
                \xi \ln \xi+\frac{\xi^3}{2}, &\quad \xi>0, \\
                0, &\quad \xi=0, \\
                +\infty, &\quad \xi<0,
              \end{array}
            \right.
$$
and
$$\Phib(\xi):=\Phi(\xi+b).$$

By the definition \eqref{30Erho}, we have
\begin{equation}\label{temp4.2}
\Erho(\rho)=\int_{0}^{L}\biggl(\frac{1}{L}\int_{0}^{L}\ln\big\lvert\sin  (\frac{\pi}{L}(x-y))|\rho(x)\rho(y) \ud y +\Phi(\rho)\biggr)\ud x.
\end{equation}
By \eqref{30Eu}, we have
$$\Eu(u)=\int_{0}^{L}\biggl(\frac{1}{L}\int_{0}^{L}\ln\big\lvert\sin  (\frac{\pi}{L}(x-y))|(u_{xx}+b)(u_{yy}+b) \ud y +\Phi_b(u_{xx})\biggr)\ud x.$$

 Since
$$\frac{\delta \Eu(u)}{\delta u}= \lpi H(u_{xx})_x+(\Phibp(u_{xx}))_{xx},$$
the equation \eqref{Du1} can be recast as
\begin{equation}\label{ua}
\begin{aligned}
&u_t+\lpi H(u_{xx})_x+(\Phibp(u_{xx}))_{xx}=0.
\end{aligned}
\end{equation}

In order to study the problem \eqref{5} in periodic and mean value zero set up, we establish first, similar to \cite{Leoni2014}, that
\begin{prop}\label{h-u}
For any integer $m\geq 1$, any $T>0$ and some constant $\beta<0$,  the following condition are equivalent:

(a) There exists $h\in L^\infty([0,T]; W^{m,3}_{\wps}(I))$ with $h_t\in L^{\infty}([0,T];W^{m-4,3/2}_{\per}(I))$ a solution of \eqref{5} satisfying
$$h_x(x,t)\leq\beta<0 \quad \ale x \in \mathbb{R},\, t\in[0,T].$$

(b) Set $b:=\frac{1}{L}>0$. There exists $u\in L^\infty([0,T]; W^{m+1,3}_{\per_0}(I))$ with $u_t\in L^{\infty}([0,T];W^{m-3,3/2}_{\per_0}(I))$ a solution of \eqref{ua}
 satisfying
$$u_{xx}(x,t)+b\geq -\beta>0 \quad \ale x \in \mathbb{R},\, t\in[0,T].$$

(c) There exists $\rho\in L^\infty([0,T]; {W}^{m-1,3}_{\per}(I))$ with $\rho_t\in L^{\infty}([0,T];({W}^{m-5,3/2}_{\per}(I)))$ a solution of
\eqref{rhoeq}
 satisfying
$$\rho(x,t)\geq -\beta>0 \quad \ale x \in \mathbb{R},\, t\in[0,T],$$
and
$$\int_0^L \rho(x,t)\ud x=1.$$
\end{prop}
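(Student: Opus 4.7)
The plan is to treat the three formulations as parametrized by a single object $\rho$, via the pointwise change of variables
\begin{equation*}
\rho = -h_x = u_{xx} + b, \qquad b = \tfrac{1}{L}.
\end{equation*}
First I would establish $(a) \Leftrightarrow (c)$. Given $h$ as in (a), set $\rho := -h_x$; periodicity of $\rho$ is immediate from $h\in W^{m,3}_{\wps}(I)$, and integrating the jump condition gives $\int_0^L \rho\ud x = -(h(L)-h(0)) = 1$. Regularity transfers with one derivative loss, so $\rho \in L^\infty([0,T]; W^{m-1,3}_{\per}(I))$ and $\rho_t = -(h_t)_x \in L^\infty([0,T]; W^{m-5,3/2}_{\per}(I))$. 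Differentiating \eqref{5} in $x$ and substituting $h_x=-\rho$, $h_{xx}=-\rho_x$, $H(h_x)=-H(\rho)$, the chemical potential $\mu$ coincides with $(\delta E_\rho/\delta\rho)_x$ from \eqref{temp3.10}, so $\rho_t = -\mu_{xxx}$, which is \eqref{rhoeq}. Conversely, given $\rho$ as in (c), define $h(x,t) := -\int_0^x \rho(y,t)\ud y + c(t)$ with $c'(t) := \mu_{xx}(0,t)$; the jump $h(x+L)-h(x) = -1$ follows from $\int_0^L \rho\ud x = 1$, and integration of the $\rho$-equation from $0$ to $x$ recovers $h_t = \mu_{xx}$.

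Next I would show $(a)\Leftrightarrow(b)$. Assuming (a), set $k_0 := -\tfrac{1}{L}\int_0^L h(x,0)\ud x - \tfrac{1}{2}$ and define $u_x := -h - bx - k_0$. This quantity is $L$-periodic (the jump in $h$ is canceled by $bL=1$) and has zero mean by the choice of $k_0$; moreover the choice is time-independent because $\tfrac{d}{dt}\int_0^L h\ud x = \int_0^L \mu_{xx}\ud x = 0$. Integrating once in $x$ and subtracting the resulting mean yields $u \in L^\infty([0,T]; W^{m+1,3}_{\per_0}(I))$, and $u_{xx}+b = -h_x \geq -\beta > 0$ is immediate. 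From $u_{xt} = -h_t = -\mu_{xx}$ one gets $u_t = -\mu_x + C(t)$; both sides have zero mean in $x$, forcing $C(t)\equiv 0$. Identifying $H(u_{xx}) = H(\rho)$ (since $H$ annihilates constants) and $\Phi_b'(u_{xx}) = \ln(u_{xx}+b) + 1 + \tfrac{3}{2}(u_{xx}+b)^2$ reproduces \eqref{ua}. The converse is symmetric: set $h := -u_x - bx - k_0$, verify $h(x+L)-h(x)=-1$ using periodicity of $u_x$ together with $bL=1$, and compute $h_t = -u_{xt} = (u_t)_x = \mu_{xx}$ after the same algebraic identification.

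The main obstacle is the careful bookkeeping of periodicity and mean-value normalizations: one must show that $k_0$ can be chosen independent of $t$ using the conservation of $\int_0^L h\ud x$, fix the integration constant in passing from $u_x$ to $u$ once and for all, and verify that the additive function-of-$t$ defect arising when antidifferentiating $u_{xt}$ to $u_t$ actually vanishes. None of these steps is hard in isolation, but together they must be unwound precisely to recover the exact regularity indices in the statement; in particular, the asymmetry between $W^{m-4,3/2}_{\per}$ for $h_t$ and $W^{m-3,3/2}_{\per_0}$ for $u_t$ reflects the single antidifferentiation in $u_x = -h - bx - k_0$, and the $W^{m-5,3/2}$ index for $\rho_t$ reflects the single extra $x$-derivative applied to $h_t$.
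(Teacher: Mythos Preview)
Your proposal is correct and follows essentially the same route as the paper: both proofs exploit the pointwise identifications $\rho=-h_x=u_{xx}+b$ and reduce everything to tracking integration constants and periodicity/mean-zero normalizations. The only cosmetic difference is in $(c)\Rightarrow(a)$: you pin down $c(t)$ by prescribing $c'(t)=\mu_{xx}(0,t)$, whereas the paper chooses the constant so that $h$ has mean zero and then kills the residual function of $t$ by testing the equation against $\varphi\equiv 1$; the latter avoids evaluating $\mu_{xx}$ pointwise, which is slightly cleaner for small $m$, but the two arguments are interchangeable.
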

\begin{proof}
Step 1. For (a)$\Rightarrow$(c), we simply take
\begin{equation}\label{rhotemp}
\rho(t,x):=-h_x(t,x)=u_{xx}(t,x)+b
\end{equation}
and then \eqref{temp3.10} shows that $\rho$ satisfies (c).

For (c)$\Rightarrow$(a), we take
$$h(x,t)=-\int_0^x \rho(s,t)ds+k_2(t),$$
with
$$k_2(t)=\frac{1}{L}\int_0^L \int_0^x \rho(y,t)\ud y \ud x.$$
Then $h_x=-\rho$ and $h\in L^\infty([0,T]; W^{m,3}_{\wps}(I)),$ with mean value zero.

 Noticing \eqref{temp3.10} again,
 we have
 $$h_{xt}=-\rho_t=\biggl(\frac{\delta \Erho}{\delta \rho}\biggr)_{xxxx}=\biggl(\frac{\delta \Eh}{\delta h}\biggr)_{xxx},$$
 in distribution sense.
 Integrating from 0 to $x$, for \ale $t\in [0,T]$, there exists a constant $c(t)$ such that
 $$h_t=\biggl(\frac{\delta \Eh}{\delta h}\biggr)_{xx}+c(t).$$
 That is, for any test function $\varphi\in W^{3,3}_{\per}(I),$ we have
 $$\frac{\ud }{\ud t}\langle h,\varphi{}\rangle=\langle\frac{\delta \Eh}{\delta h},\varphi_{xx}{}\rangle+\langle c(t),\varphi{}\rangle.$$
Taking $\varphi=1$, we get $c(t)=0,\text{ for }\ale t\in[0,T].$ Hence $h$ is the solution of \eqref{5}.

Step 2.
For (a)$\Rightarrow$(b), we take
$$h^T(x,t)=h(x,t)+bx,$$
with $b=\frac{1}{L}.$ From \eqref{2} and \eqref{5}, we know $h^T$ is $L-$periodic function respect to $x$.

Denote
$$k_0=\frac{1}{L}\int_0^L h^T(s,0)ds,$$
$$k_1(t)=\frac{1}{L}\int_0^L \int_0^x h^T(y,t)\ud y \ud x-k_0\frac{L}{2}.$$
Set
\begin{equation}\label{620_2}
u(x,t)=\int_0^x\biggl(-h^T(y,t)+k_0 \biggr)\ud y+k_1(t).
\end{equation}
We know $u$ is $L-$periodic function with mean value zero.
To prove such $u$ satisfies \eqref{ua}, we can proceed just the same as Step 1.

Note we also have
\begin{equation}\label{star7}
u_x=-h-bx+k_0,
\end{equation}
\begin{equation}\label{star8}
u_{xx}=-h_x-b.
\end{equation}

For (b)$\Rightarrow$(a),
we simply take
\begin{equation}
h=-u_x-bx.
\end{equation}
Then \eqref{30Eu} and \eqref{Du1} show that $h$ satisfies (b).
\end{proof}

\begin{prop}\label{h-phi}
For any integer $m\geq 2$, the following condition are equivalent:

(i) There exists $h\in L^\infty([0,T]; W^{1,\infty}_{\wps}(I)\cap W^{m,2}(I))$ with $h_t\in L^{\infty}([0,T];W^{-3,\infty}_{\per}(I))$ a solution of \eqref{5} satisfying
\begin{equation}\label{con1}
h_x(x,t)\leq \beta_1 <0 \quad \ale  x \in \mathbb{R},\, t\in[0,T],
\end{equation}
for some $\beta_1<0.$

(ii) There exists $\phi\in L^\infty([0,T]; W^{1,\infty}_{\wps}([0,1])\cap W^{m,2}([0,1]))$ with $\phi_t\in L^{\infty}([0,T];W^{-3,\infty}_{\per}([0,1]))$ a solution of \eqref{phi3} satisfying
\begin{equation}\label{con2}
\phi_\alpha(\alpha,t)\leq \beta_2<0 \quad \ale \alpha \in \mathbb{R},\, t\in[0,T],
\end{equation}
for some $\beta_2<0.$
\end{prop}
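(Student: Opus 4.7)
The plan is to closely mimic the proof of Proposition \ref{h-u}, using the pointwise change of variables $\alpha=h(\phi(\alpha,t),t)$ together with the identities \eqref{fan} and the variational identification $\frac{\delta\Eh}{\delta h}\circ\phi=\frac{\delta\Ephi}{\delta\phi}$ already established in \eqref{tilde}--\eqref{EF}. The two monotonicity conditions \eqref{con1} and \eqref{con2} are precisely what is needed so that $h(\cdot,t)$ and $\phi(\cdot,t)$ are bi-Lipschitz bijections on their periodic domains, giving globally defined inverses with uniform two-sided bounds on $|h_x|$ and $|\phi_\alpha|$.

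For (i)$\Rightarrow$(ii), the uniform bound $h_x\leq\beta_1<0$ together with $h\in W^{1,\infty}_{\wps}(I)$ implies that $h(\cdot,t):\mathbb{R}\to\mathbb{R}$ is a strictly decreasing bijection for \ale $t\in[0,T]$. Its inverse $\phi(\cdot,t)$ is Lipschitz with $\phi_\alpha=1/h_x(\phi)\in[1/\beta_1,0)$, so we may take $\beta_2:=1/\beta_1$. The twisted periodicity $h(x+L)-h(x)=-1$ translates into the analogous twisted periodicity of $\phi$ over a period of length $1$ with jump $-L$, placing $\phi(\cdot,t)\in W^{1,\infty}_{\wps}([0,1])$ in the sense of the proposition. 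Higher regularity is obtained by differentiating $h_x(\phi)\phi_\alpha=1$ repeatedly and applying Fa\`a di Bruno: each $\partial_\alpha^k\phi$ is a rational expression in $\phi_\alpha,\ldots,\partial_\alpha^{k-1}\phi$ and in $\partial_x^j h$ composed with $\phi$ for $1\leq j\leq k$, with denominators $h_x^N\circ\phi$ uniformly bounded away from zero. Standard one-dimensional Sobolev composition and multiplication estimates then deliver $\phi\in L^\infty([0,T];W^{m,2}([0,1]))$. For the time derivative, \eqref{fan} gives $\phi_t=-h_t/h_x$ in the distributional sense; composition of $h_t\in W^{-3,\infty}_{\per}(I)$ with the bi-Lipschitz $\phi$ stays in $W^{-3,\infty}_{\per}([0,1])$ by duality, and multiplication by the smooth factor $1/h_x\circ\phi$ preserves this negative-order regularity once $m$ is large enough for $h\in C^3$ by Sobolev embedding.

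It remains to verify that $\phi$ satisfies \eqref{phi3}, and this is already recorded in Section \ref{sec2}: starting from \eqref{h} in distributional form, applying the chain rule relations $\phi_t=-\phi_\alpha h_t$ and $\mu_x=\mu_\alpha/\phi_\alpha$ produces \eqref{phi2}, and the identification $\mu=\frac{\delta\Ephi}{\delta\phi}$ from \eqref{tilde}--\eqref{EF} upgrades this to \eqref{phi3}. Each of the nonlinear expressions in \eqref{Frac_phi}, namely $\phi_{\alpha\alpha}/\phi_\alpha^2$, $\phi_{\alpha\alpha}/\phi_\alpha^4$, and the principal-value integral, is classically defined thanks to $\phi\in W^{m,2}$ with $m\geq 2$ and the uniform bound $\phi_\alpha\leq\beta_2<0$. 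The converse (ii)$\Rightarrow$(i) is completely symmetric: \eqref{con2} allows us to invert $\phi$ and define $h:=\phi^{-1}$, the analogue of \eqref{fan} supplies $h_t=-\phi_t/\phi_\alpha$, the same Fa\`a di Bruno argument transfers the spatial regularity, and applying the variational identification in the reverse direction converts \eqref{phi3} back into \eqref{5}.

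The main technical obstacle throughout is the careful bookkeeping of Sobolev regularity under nonlinear composition and under multiplication of a negative-order distribution by a Sobolev factor with only finitely many bounded derivatives. Both difficulties are controlled in one dimension by the uniform bi-Lipschitz bounds supplied by \eqref{con1}--\eqref{con2} together with the assumption that $m$ is large enough for the composition and multiplier lemmas to apply at the required order; the proposition is stated for $m\geq 2$ but one does not attempt to push the regularity to the edge.
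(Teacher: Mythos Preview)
Your approach is essentially the same as the paper's: both invert $h$ via the monotonicity condition, transfer the $W^{1,\infty}$ bound through $\phi_\alpha=1/h_x$, and then pass to higher Sobolev regularity by differentiating the inverse-function identity (Fa\`a di Bruno) and using the one-dimensional embedding $W^{m,2}\hookrightarrow W^{m-1,\infty}$. The paper's proof is in fact considerably terser than yours---it writes down the bound $\phi^{(m)}\leq C(\beta_1)\bigl(h^{(m)}+\sum h^{(\alpha_1)}\cdots h^{(\alpha_m)}\bigr)$, invokes the embedding, and says ``vice versa,'' without explicitly verifying the PDE transfer or the time-derivative regularity. Your additional remarks on the distributional chain rule for $\phi_t$ and the use of \eqref{tilde}--\eqref{EF} to carry \eqref{h} to \eqref{phi3} fill in details the paper leaves implicit; the acknowledged looseness at $m=2$ is likewise present (unstated) in the paper.
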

\begin{proof} Notice condition \eqref{con1}, \eqref{con2}. By inverse function theorem, $h$ and $\phi$ are inverse functions of each other. Noticing \eqref{phi} and \eqref{fan}, $h\in L^\infty([0,T]; W^{1,\infty}_{\wps}(I))$ with condition \eqref{con1} implies that $\phi\in L^\infty([0,T]; W^{1,\infty}_{\wps}([0,1]))$ with condition \eqref{con2}.

From the differentiation of inverse function, we also know
$$\phi^{(m)}\leq C(\beta_1)(h^{(m)}+\sum_{ {0\leq \alpha_i\leq m-1}}h^{(\alpha_1)}h^{(\alpha_2)}\cdots h^{(\alpha_m)}).$$
Since $W^{m,2}\hookrightarrow W^{(m-1),\infty},$
we have
$$\int_0^L  \lvert\phi^{(m)}\rvert ^2\ud\alpha \leq C(\beta_1)(\|h\|^2_{W^{m,2}}+\|h\|^m_{W^{m,2}}).$$
Hence $h\in L^\infty([0,T]; W^{m,2}(I))$ with condition \eqref{con1} implies that $\phi\in L^\infty([0,T]; W^{m,2}([0,1]))$ with condition \eqref{con2}.
Vice versa.
\end{proof}

\section{Local strong solution and proof of Theorem \ref{local_h}}\label{sec4}
We continue studying the properties of the continuum PDE.
From now on, denote
$$ \varphi^{(n)}(x)=\frac{\ud ^n}{\ud  x^n} \varphi(x),$$
and $c$ as a generic constant whose value may change from line to line.
We first establish the existence and uniqueness of the local strong solution to \eqref{ua}.
\begin{thm}\label{local}
Assume $u^0\in W^{m,2}_{\per_0}(I),$ $u_{xx}^0+b\geq \eta$, where $\eta$ is a positive constant, $m\in \mathbb{Z},\, m\geq 7$. Then there exists time $T_{m}$ depending on $\eta,\,\|u^0\|_{W^{m,2}_{\per_0}}$
such that
$$u\in L^{\infty}([0,T_m];W^{m,2}_{\per_0}(I))\cap L^2([0,T_m];W^{m+2,2}_{\per_0}(I))\cap C([0,T_m];W^{m-4,2}_{\per_0}(I)),$$
$$u_t\in L^{\infty}([0,T_m];W^{m-4,2}_{\per_0}(I))\cap L^2([0,T_m];L^{2}_{\per_0}(I))$$
is the unique strong solution of \eqref{ua} with initial data $u^0$, and $u$ satisfies
$$u_{xx}+b\geq\frac{\eta}{2},\quad \ale  t\in[0,T_m],\,x\in [0,L].$$
\end{thm}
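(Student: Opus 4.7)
The plan is to treat \eqref{ua} as a fourth-order quasilinear parabolic PDE: rewriting it as
$$u_t + \tfrac{2\pi}{L}(Hu_{xx})_x + \Phi_b''(u_{xx})\,u_{xxxx} + \Phi_b'''(u_{xx})\,u_{xxx}^2 = 0,$$
one sees that the leading dissipation has nonlinear coefficient $\Phi_b''(u_{xx}) = \tfrac{1}{u_{xx}+b} + 3(u_{xx}+b)^2$, which is strictly positive and bounded above on the set $\{u_{xx}+b \geq \eta/2\}$. I would first construct approximate solutions by combining a Galerkin truncation in the Fourier basis of $W^{m,2}_{\per_0}(I)$ with a regularization of the singular nonlinearity: replace $\ln(\xi+b)$ by a smooth globally Lipschitz function agreeing with it on $\{\xi+b \geq \eta/4\}$. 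This produces a smooth finite-dimensional ODE system whose solutions exist globally in time.

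Next, I would derive higher-order a priori estimates by differentiating the equation $m$ times in $x$, testing against $\partial_x^m u$, and integrating by parts. The leading term yields the dissipation $\int_0^L \Phi_b''(u_{xx}) |u^{(m+2)}|^2 \ud x$. The commutator terms coming from Faà di Bruno's formula for $\partial_x^m[\Phi_b'(u_{xx})]$ involve products of lower-order derivatives of $u$ that are controlled by $\|u\|_{W^{m,2}}$ via the Sobolev embedding $W^{m,2}(I) \hookrightarrow C^{m-1}(\bar I)$, and the highest-order residuals are absorbed into the dissipation by Young's inequality. The Hilbert transform piece is $L^2$-bounded and strictly lower order than the fourth-order dissipation. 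The outcome is a differential inequality of the form
$$\frac{\ud}{\ud t}\|u\|_{W^{m,2}}^2 + c\,\|u\|_{W^{m+2,2}}^2 \leq C\bigl(\eta^{-1}, \|u\|_{W^{m,2}}\bigr),$$
from which Gronwall gives a bound on $\|u(t)\|_{W^{m,2}}$ valid up to some $T_m = T_m(\eta,\|u^0\|_{W^{m,2}})$.

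To close the argument, I would show that the lower bound $u_{xx}(t,x)+b \geq \eta/2$ persists on $[0,T_m]$, possibly after shrinking $T_m$. Since $m \geq 7$ gives $W^{m,2} \hookrightarrow C^5$, the equation directly bounds $\|u_{xxt}\|_{L^\infty}$ in terms of $\eta^{-1}$ and $\|u\|_{W^{m,2}}$, so $u_{xx}(t,x)+b \geq \eta - C(\eta,\|u^0\|_{W^{m,2}})\,t$. On the time interval where this stays $\geq \eta/2$, the regularized nonlinearity coincides with the original, so the uniform estimates pass to the Galerkin/regularization limit by Aubin-Lions compactness, and the resulting limit is a strong solution of \eqref{ua} with the stated regularity (the bound on $u_t$ following from the equation and the $L^2_t W^{m+2,2}_x$ bound on $u$). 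Uniqueness is then obtained by a standard energy estimate on the difference $w = u_1 - u_2$ of two such solutions: writing $\Phi_b'(u_{1,xx}) - \Phi_b'(u_{2,xx}) = a(x,t)\,w_{xx}$ with
$$a(x,t) := \int_0^1 \Phi_b''\bigl(\theta u_{1,xx}+(1-\theta)u_{2,xx}\bigr)\,\ud\theta \geq c(\eta)>0,$$
a Gronwall argument on $\|w\|_{L^2}^2$ gives $w\equiv 0$.

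The main obstacle is the interplay between the logarithmic singularity of $\Phi_b'$ and the quasilinear fourth-order structure: every estimate must be performed under a quantitative positive lower bound on $u_{xx}+b$, and the persistence of this bound on a uniform time interval dictates both the high initial regularity ($m \geq 7$) and the specific dependence of $T_m$ on $\eta$ and $\|u^0\|_{W^{m,2}}$. The other delicate point is ensuring that the Faà di Bruno remainders at order $m$ do not overwhelm the dissipation, which is precisely what the Sobolev embedding together with the chosen $m$ is designed to handle.
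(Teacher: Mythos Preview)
Your proposal is correct and follows essentially the same architecture as the paper: derive higher-order energy estimates by differentiating the equation $m$ times and testing with $u^{(m)}$, control the Fa\`a di Bruno commutators via Sobolev embedding and interpolation, close the a priori lower bound $u_{xx}+b\ge\eta/2$ by bounding $\|u_{xxt}\|_{L^\infty}$ (this is exactly where $m\ge7$ is used), and pass to the limit via Aubin--Lions compactness. The paper's proof differs from yours only in the choice of approximation scheme: instead of a Galerkin truncation combined with a cutoff of the logarithmic nonlinearity, the authors mollify the energy, setting $E_u^\delta(u):=E_u(J_\delta*u)$ and solving $u_t^\delta=-\delta E_u^\delta/\delta u^\delta$ as an ODE in $W^{m+2,2}_{\per_0}$; they also exploit the gradient-flow structure to obtain $\|u_t^\delta\|_{L^2_{t,x}}$ directly from energy dissipation. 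One small sharpening worth noting: the paper observes that the leading coefficient satisfies $\Phi_b''(\xi)=3(\xi+b)+\tfrac{1}{\xi+b}\ge 2\sqrt{3}$ for all $\xi+b>0$, so the dissipation lower bound is actually independent of $\eta$; the $\eta$-dependence enters only through the bounds on the higher derivatives $\Phi_b^{(k)}(\xi)\le C_k/(\xi+b)^k$ appearing in the commutator terms. Your uniqueness argument via the mean-value representation $a(x,t)=\int_0^1\Phi_b''(\theta u_{1,xx}+(1-\theta)u_{2,xx})\,\mathrm{d}\theta$ is cleaner than what the paper does here (the paper defers uniqueness to the $h$-formulation in the proof of Theorem~\ref{local_h}).
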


\begin{proof}
We first make the {\it{a-priori}} assumption
\begin{equation}\label{a-p1}
\min_{x\in I} (u_{xx}+b)\geq \frac{\eta}{2}>0,\quad \ale  t\in[0,T_m],
\end{equation}
in which $T_m$ will be determined later. We will prove the existence of local strong solution under \eqref{a-p1} in step 1,2, then justify \eqref{a-p1} in step3.

Let $J_{\delta}$ be the standard $C_c^{\infty}(I)$ mollifier. Denote $\bar{u}^{\delta}=J_{\delta}*u^{\delta}$.

Define $\Eud(u):=\Eu(J_\delta * u)$. Then
$$\frac{\delta \Eud( u^{\delta})}{\delta u^{\delta}}=J_\delta * \biggl.\frac{\delta \Eu(u)}{\delta u}\biggr|_{\bar{u}^{\delta}}.$$
We study problem
\begin{equation}\label{XXX}
\biggl\{
  \begin{array}{ll}
 u_t^{\delta}=-\frac{\delta \Eud(u^{\delta})}{\delta u^{\delta}},\\
    u^{\delta}(0)=J_\delta *u^{0},
  \end{array}
\biggr.
\end{equation}
which is
\begin{equation}\label{reg1}
\biggl\{
  \begin{array}{ll}
    u^{\delta}_t=(J_{\delta}*(-\lpi H(\bar{u}_{xx}^\delta)))_x-(J_{\delta}*\Phibp(\bar{u}_{xx}^\delta))_{xx}, \\
    u^{\delta}(0)=J_{\delta}*u^0.
  \end{array}
\biggr.
\end{equation}

Step 1. We devote to obtain some \textit{a-priori} estimates, which will be used to prove the convergence of $u^\delta$ in \eqref{XXX}.

Taking $u$ as a test function in \eqref{ua} gives
$$\int_0^L u_t u \ud x=\int_0^L \lpi H(u_{xx})u_x-(\ln (u_{xx}+b)+
\frac{3}{2}(u_{xx}+b)^2)u_{xx} \ud x.$$
Notice that
$$\int_0^L H(u_{xx})u_x \ud x \leq \int_0^L \frac{3}{4}u_{xx}^{ 2}+2u^{ 2} \ud x \leq \int_0^L \frac{1}{8}u^{ 3}_{xx}+2 u^{ 2} \ud x +C(L), $$
and that
$$\int_0^L \ln( u_{xx}+b) u_{xx} \ud x \leq C(\eta,L)+\frac{1}{8} \int_0^L u^{ 3}_{xx} \ud x,$$
due to \eqref{a-p1}. We obtain
$$\frac{\ud }{\ud t}\int_0^L u^{2}\ud x+\int_0^L u^{ 3}_{xx} \ud x \leq c\int_0^L u^{2}\ud x +C(\eta,L).$$
Then for some $T_1>0$, Gr\"onwall's inequality implies that
$$\|u\|_{L^{\infty}([0,T_1];L^2(I))}\leq C(\eta,L,\|u^0\|_{W_{\per_0}^{m,2}},T_1),$$
\begin{equation}
\|u_{xx}\|_{L^2([0,T_1]; L^3(I))}\leq C(\eta,L,\|u^0\|_{W_{\per_0}^{m,2}},T_1).
\end{equation}
Here and the following, $C(\eta,L,\|u^0\|_{W_{\per_0}^{m,2}},T_1)$ is a constant depending only on $\eta,\,L,\,\|u^0\|_{W_{\per_0}^{m,2}} $ and $T_1$.

Recall \eqref{rhotemp}. We use $\rho=u_{xx}+b$ from now.

Since
$$\frac{\ud \Eu(u)}{\ud t}+\int_0^L  \biggl(\frac{\delta \Eu(u)}{\delta u}\biggr)^2\ud x=0,$$
we have
\begin{equation}\label{Eau}
\Eu(u)\leq \Eu(u_0)<+\infty.
\end{equation}

Also notice
\begin{equation}\label{rho1}
\begin{aligned}
&|\int_0^L  \int_0^L  \ln\big\lvert\sin  \frac{\pi}{L}(x-y)\big\rvert \rho(x)\rho(y)\ud x\ud y|\\
\leq& \biggl(\int_0^L \int_0^L  \ln^2\big\lvert\sin  \frac{\pi}{L}(x-y)\big\rvert \ud x\ud y\biggr)^{\frac{1}{2}}\int_0^L \rho^2(x)\ud x\\
\leq & \frac{1}{8}\int_0^L  \rho^3\ud x +C(L),
\end{aligned}
\end{equation}
and
\begin{equation}\label{rho2}
\begin{aligned}
&|\int_0^L  \rho \ln \rho \ud x|
\leq & \frac{1}{8}\int_0^L  \rho^3\ud x +C(\eta,L).
\end{aligned}
\end{equation}
These, together with \eqref{Eau}, give that
\begin{equation}\label{uxxc}
\frac{1}{4}\sup_{0\leq t\leq T_1}\int_0^L  \rho^3 \ud x <\Erho(0)+C(\eta,L).
\end{equation}

Now we devote to get a higher-order priori estimate for $m\geq 4.$

Divide $m$ times in equation \eqref{ua} and then take $u^{(m)}$ as a test function, which implies that
\begin{equation}\label{Hm}
\frac{\ud }{\ud t}\|u\|_{\dot{W}^{m,2}}=\int_0^L  -\lpi H(\rho)^{(m+1)}u^{(m)}-f(\rho)^{(m+2)}u^{(m)} \ud x,
\end{equation}
where
$$f(\rho)=\Phi'(\rho)=\ln \rho +1+\frac{3}{2} \rho^2.$$
For the first term in \eqref{Hm}, we have
\begin{equation}\label{TT1}
\begin{aligned}
|\int_0^L  -H(\rho)^{(m+1)}u^{(m)} \ud x|=&|\int_0^L  -H(\rho)^{(m)}\rho^{(m-1)}\ud x|\\
\leq & \frac{1}{8}\int_0^L  \rho^{(m)2}\ud x +2\int_0^L  \rho^{(m-1)2}\ud x\\
\leq & \frac{1}{4} \int_0^L \rho^{(m)2}\ud x +c\int_0^L  \rho^{(m-2)2}\ud x.
\end{aligned}
\end{equation}

For the second term in \eqref{Hm}, we have
\begin{equation}\label{TT2}
\begin{aligned}
\int_0^L  -f(\rho)^{(m+2)}u^{(m)} \ud x=&\int_0^L  -f(\rho)^{(m)}\rho^{(m)}\ud x\\
= & \int_0^L  -(f'(\rho)\rho_x)^{(m-1)}\rho^{(m)}\ud x\\
= & \int_0^L  -f'(\rho)\rho^{(m)2}\ud x+\int_0^L  \sum_{k=0}^{m-2} C_k f'(\rho)^{(m-1-k)}\rho_x^{(k)}\rho^{(m)} \ud x.
\end{aligned}
\end{equation}
Note that
$$f'(\rho)=3\rho+\frac{1}{\rho}\geq 2\sqrt{3}, \text{ for }\rho>0,$$
so the first term on the right hand of \eqref{TT2} is strictly negative. We will use it to control the other terms later.

Now we carefully estimate the last term in \eqref{TT2}.
Denote
\begin{equation*}
\begin{aligned}
M_1:=&\int_0^L  \sum_{k=0}^{m-2} C_k f'(\rho)^{(m-1-k)}\rho_x^{(k)}\rho^{(m)} \ud x\\
\leq & \|\rho^{(m)}\|_{L^2} \biggl[\sum_{k=0}^{m-2} C_k \|f'(\rho)^{(m-1-k)}\rho_x^{(k)}\|_{L^2}\biggr].
\end{aligned}
\end{equation*}

First the chain rule gives
$$f'(\rho)^{(m-1-k)}=\sum_{\beta_1+\beta_2+\cdots+\beta_\mu=m-1-k} C_\beta \rho^{(\beta_1)}\rho^{(\beta_2)}\cdots\rho^{(\beta_\mu)}f^{(\mu+1)}(\rho).$$
Due to \eqref{a-p1}, we know
$$f^{(\mu+1)}(\rho)\leq \frac{C_\mu}{\rho^{\mu+1}}\leq \frac{C_\mu}{\eta^{\mu+1}}, \text{ for }\mu\geq 1.$$
Also noticing that
$$\|\rho^{(m-3)}\|_{L^\infty}\leq c\|\rho\|_{W^{m-2,2}},$$
we have
$$\|f'(\rho)^{(m-1-k)}\|_{L^4}\leq C(\eta,m)\|\rho\|_{W^{m-2,2}}^{m-1}, \text{ for }2\leq k \leq m-2,$$
$$\|f'(\rho)^{(m-2)}\|_{L^4}\leq C(\eta,m)(\|\rho\|_{W^{m-2,2}}^{m-1}+\|\rho^{(m-2)}\|_{L^4}), \text{  for }k=1,$$
and
$$\|f'(\rho)^{(m-1)}\|_{L^4}\leq C(\eta,m)(\|\rho\|_{W^{m-2,2}}^{m-1}+\|\rho^{(m-2)}\|_{L^4}+\|\rho^{(m-1)}\|_{L^4}), \text{ for }k=0.$$

Second by interpolating, we know
\begin{equation}\label{st1}
\|\rho^{(m-2)}\|_{L^4}\leq c \|\rho^{(m-2)}\|_{L^2}^{\frac{7}{8}}\|\rho^{(m)}\|_{L^2}^{\frac{1}{8}},
\end{equation}
\begin{equation}\label{st2}
\|\rho^{(m-1)}\|_{L^4}\leq c \|\rho^{(m-2)}\|_{L^2}^{\frac{3}{8}}\|\rho^{(m)}\|_{L^2}^{\frac{5}{8}},
\end{equation}
and for $\mu< m-2,$
\begin{equation}\label{st3}
\|\rho^{(\mu)}\|_{L^4}\leq c\|\rho^{(m-2)}\|_{L^4}+c\|\rho\|_{L^4} \leq c \|\rho^{(m-2)}\|_{L^2}^{\frac{7}{8}}\|\rho^{(m)}\|_{L^2}^{\frac{1}{8}}+c\|\rho\|_{W^{m-2,2}}.
\end{equation}

Thus \eqref{st1}, \eqref{st2} and \eqref{st3} show that
\begin{equation}\label{st4}
\begin{aligned}
&\sum_{k=0}^{m-2} C_k \|f'(\rho)^{(m-1-k)}\rho_x^{(k)}\|_{L^2}\\
\leq &\sum_{k=0}^{m-2} C_k \|f'(\rho)^{(m-1-k)}\|_{L^4}\|\rho_x^{(k)}\|_{L^4}\\
\leq & c \|f'(\rho)^{(m-2)}\|_{L^4}\|\rho_{xx}\|_{L^4}+\sum_{k=1}^{m-2} C(k,\eta,m)\|\rho\|_{W^{m-2,2}}^{m-1}(\|\rho^{(m-2)}\|_{L^2}^{\frac{7}{8}}\|\rho^{(m)}\|_{L^2}^{\frac{1}{8}}+c\|\rho\|_{W^{m-2,2}})\\
&+C(\eta,m)\|\rho\|_{W^{m-2,2}}^{m-1}\|\rho^{(m-2)}\|_{L^2}^{\frac{3}{8}}\|\rho^{(m)}\|_{L^2}^{\frac{5}{8}}
\end{aligned}
\end{equation}
For the first term, we have
\begin{equation}\label{st5}
\begin{aligned}
&\|f'(\rho)^{(m-2)}\|_{L^4}\|\rho_{xx}\|_{L^4}\\
\leq & C(\eta,m)(\|\rho\|_{W^{m-2,2}}^{m-1}+\|\rho^{(m-2)}\|_{L^4})(\|\rho^{(m-2)}\|_{L^4}+\|\rho\|_{W^{m-2,2}})\\
\leq &  C(\eta,m)\Big[\|\rho\|_{W^{m-2,2}}^{m}+(\|\rho\|_{W^{m-2,2}}^{m-1}+1)\|\rho^{(m-2)}\|_{L^2}^{\frac{7}{8}}\|\rho^{(m)}\|_{L^2}^{\frac{1}{8}}+\|\rho^{(m-2)}\|_{L^2}^{\frac{7}{4}}\|\rho^{(m)}\|_{L^2}^{\frac{1}{4}}\Big],
\end{aligned}
\end{equation}
where we used \eqref{st1} and \eqref{st3}.

Notice that \eqref{uxxc} gives $\|\rho\|_{L^\infty(0,T_1; L^2(I))}\leq C(\eta,L)$. By interpolating,
\eqref{st4} and \eqref{st5} lead to
\begin{equation}\label{TT3}
\begin{aligned}
M_1\leq & C(\eta,m)\Big[\|\rho^{(m)}\|_{L^2}^{\frac{5}{8}}\|\rho\|_{\dot{W}^{m-2,2}}^{m}+\|\rho^{(m)}\|_{L^2}^{\frac{1}{8}}\|\rho\|_{\dot{W}^{m-2,2}}^{m}\\
&+\|\rho^{(m)}\|_{L^2}^{\frac{1}{4}}\|\rho\|_{\dot{W}^{m-2,2}}^{m+1}+\|\rho\|_{\dot{W}^{m-2,2}}^{m}+C(\eta,L)\Big]\|\rho^{(m)}\|_{L^2}\\
\leq & \frac{1}{8} \|\rho^{(m)}\|_{L^2}^2+C(\eta,m)\|\rho\|_{\dot{W}^{m-2,2}}^{10m}+C(\eta,L).
\end{aligned}
\end{equation}

Combining \eqref{TT1}, \eqref{TT2}, \eqref{TT3} and Gr\"onwall's inequality, we finally obtain
$$\|u\|_{L^{\infty}([0,T_1];W^{m,2}_{\per_0}(I))}\leq C(\eta,L,\|u^0\|_{W_{\per_0}^{m,2}},T_1),$$
$$\|u\|_{L^2([0,T_1]; W^{m+2,2}_{\per_0}(I))}\leq C(\eta,L,\|u^0\|_{W_{\per_0}^{m,2}},T_1).$$

Step 2. Define $F_\delta : W^{m+2,2}_{\per_0}\rightarrow W^{m+2,2}_{\per_0}$ with
$$F_\delta(u^\delta):=(J_{\delta}*(-\lpi H(\bar{u}_{xx}^\delta)))_x-(J_{\delta}*\Phibp(\bar{u}_{xx}^\delta))_{xx}.$$
We can easily check that $F_\delta$ is locally Lipschitz continuous in $W^{m+2,2}(I)$ for $m\geq1$. Hence by \cite[Theorem 3.1]{Majda2002}, we know \eqref{reg1}
has a unique local solution $u^\delta\in C^1([0,T_0]; W^{m+2,2}_{\per_0}(I))$ and those estimates in Step 1 hold true uniformly in $\delta$. That is, for $T_0$,
we have
\begin{equation}\label{temp1026_1}
\|u^\delta\|_{L^{\infty}([0,T_0];W^{m,2}_{\per_0}(I))}\leq C(\eta,L,\|u^0\|_{W_{\per_0}^{m,2}},T_0),
\end{equation}
\begin{equation}\label{temp1026_2}
\|u^\delta\|_{L^2([0,T_0]; W^{m+2,2}_{\per_0}(I))}\leq C(\eta,L,\|u^0\|_{W_{\per_0}^{m,2}},T_0).
\end{equation}

Since $$E_u^\delta(u^\delta(T))+\int_0^T \int_0^L  u^{\delta 2}_t \ud x \ud t =E_u^\delta(u^\delta(0)),$$
we also have
\begin{equation}\label{temp1026_3}
\|u^\delta_t\|_{L^2([0,T_0]\times I)} \leq C(\eta,L,\|u^0\|_{W_{\per_0}^{m,2}}).
\end{equation}
{  Notice $W^{m+2,2}\hookrightarrow W^{m+1,2}$ compactly and $W^{m+1,2}\hookrightarrow L^2$.
Therefore, as $\delta\rightarrow 0,$  we can use Lions-Aubin's compactness lemma to obtain there exists a subsequence, still denoted as $u^\delta$, such that
$$u^\delta\rightarrow u, \text{ in }L^2([0,T_0];W^{m+1,2}_{\per_0}(I)).$$
And \eqref{temp1026_1}, \eqref{temp1026_2} and \eqref{temp1026_3} show that
$$u\in L^{\infty}([0,T_0];W^{m,2}_{\per_0}(I))\cap L^2([0,T_0];W^{m+2,2}_{\per_0}(I)), $$
$$u_t\in L^{\infty}([0,T_0];W^{m-4,2}_{\per_0}(I)).$$
 Thus we can take limit in \eqref{reg1} and $u$ satisfies \eqref{ua} almost everywhere, i.e., $u$ is the local strong solution of \eqref{ua}. }

Since
$$\|u_t\|_{L^2([0,T_0]\times I)}\leq \liminf_{\delta\rightarrow 0} \|u^\delta_t\|_{L^2([0,T_0]\times I)} \leq C(\eta,L,\|u^0\|_{W_{\per_0}^{m,2}}),$$
$$u_t\in L^2([0,T_0]\times I),$$
by \cite[Theorem 4, p.~288]{Evans1998}, we actually have
$$u\in C([0,T_0];W^{1,2}_{\per_0}(I)).$$

Step 3. We justify the a-priori assumption \eqref{a-p1}. Note that
\begin{equation}\label{uuu}
u_{xx}(x,t)=u_{xx}(0)+\int_0^t u_{xxt}(x,\tau)d\tau,
\end{equation}
and $u_{xx}^0+b \geq \eta$,
so Step 2 and Sobolev embedding theorem lead to
$$u_{xxt}\in L^{\infty}([0,T_0], W^{m-6,2}(I))\hookrightarrow L^{\infty}([0,T_0],L^{\infty}(I)),$$
for $m\geq 7$. Then
$$|\int_0^t u_{xxt}(x,\tau)d\tau|\leq t \|u_{xxt}\|_{L^{\infty}([0,T_0],L^{\infty}(I))}\leq \frac{\eta}{2},\quad t\in[0,T_m],$$
where $T_m<T_0$ depends only on $\eta,\,L$ and $\|u^0\|_{W^{m,2}(I)}$.
This, together with \eqref{uuu}, gives \eqref{a-p1}.
\end{proof}

By using the above Theorem \ref{local}, we now  prove the Theorem \ref{local_h}.
\begin{proof}[Proof of Theorem \ref{local_h}]
Step 1 (Existence).
Assume $h^0\in W^{m,2}_{\per^\star}(I),$ $h_{x}^0\leq \beta$, for some constant $\beta<0$, $m\in \mathbb{Z},\, m\geq 6$. From \eqref{620_2}, there exists $u^0\in W^{m+1,2}_{\per^\star}(I)$ satisfying $u_{xx}^0+b\geq -\beta$. Then by Theorem \ref{local}, there exists $T_m>0$, such that there exists a unique $u$ satisfying \eqref{ua} with the following regularity:
$$u\in L^{\infty}([0,T_m];W^{m+1,2}_{\per_0}(I))\cap L^2([0,T_m];W^{m+3,2}_{\per_0}(I))\cap C([0,T_m];W^{m-3,2}_{\per_0}(I)),$$
$$u_t\in L^{\infty}([0,T_m];W^{m-3,2}_{\per_0}(I)),$$
and $u$ satisfies
$$u_{xx}+b\geq-\frac{\beta}{2},\quad \ale  t\in[0,T_m],\,x\in [0,L].$$
Let $h:=-u_x-bx$. Hence we can get the existence of solution to \eqref{5} satisfying \eqref{620_3} and the regularity stated in Theorem \ref{local_h}.

Step 2 (Uniqueness). Now we assume $h_1,\,h_2$ are two solutions of \eqref{5} satisfying \eqref{620_3} and the same regularity stated in Theorem \ref{local_h}.
Subtract $h_2$-equation from $h_1$-equation and multiply $h_1-h_2$ on both sides. Then integration by parts shows that
\begin{equation}\label{tem20_1}
\begin{aligned}
&\frac{\ud}{\ud t}\int_0^L (h_1-h_2)^2 \ud x=\int_0^L (h_{1t}-h_{2t})(h_1-h_2)\ud x\\
=&\int_0^L -\frac{2\pi}{L}H(h_{1x}-h_{2x})(h_{1xx}-h_{2xx})+\Big[\Big(3h_{1x}+\frac{1}{h_{1x}}\Big)h_{1xx}-\Big(3h_{2x}+\frac{1}{h_{2x}}\Big)h_{2xx}\Big](h_{1xx}-h_{2xx})\ud x\\
=&\int_0^L -\frac{2\pi}{L}H(h_{1x}-h_{2x})(h_{1xx}-h_{2xx})+\Big(3h_{2x}+\frac{1}{h_{2x}}\Big)(h_{1xx}-h_{2xx})^2\\
&+\Big(3h_{1x}+\frac{1}{h_{1x}}-3h_{2x}-\frac{1}{h_{2x}}\Big)h_{1xx}(h_{1xx}-h_{2xx})\ud x\\
=&I_1+I_2+I_3.
\end{aligned}
\end{equation}
Since
\begin{equation}\label{t20_1}
3h_{2x}+\frac{1}{h_{2x}}\leq -2\sqrt{3}, \text{ due to }h_{2x}<0,
\end{equation}
the second term on the right hand of \eqref{tem20_1} is strictly negative, which will be used to control the other two terms.
For $I_1$, notice the property of Hilbert transform $\|H(u)\|_{L^p}\leq c\|u\|_{L^p}$ for $1<p<\infty;$ see \cite[Proposition 9.1.3]{Butzer1971}.
We can use Young's inequality and interpolating to obtain
\begin{equation}\label{t20_2}
I_1\leq \int_0^L \frac{1}{4}(h_{1xx}-h_{2xx})^2 +c(h_1-h_2)^2\ud x.
\end{equation}
To estimate $I_3$, first notice that $h_{1xx}$ is bounded by $\|h_{1}(0)\|_{W^{m,2}}$ and that
\begin{equation*}
\vert h_{1x}\vert\geq -\frac{\beta}{2}>0,\quad \vert h_{2x}\vert\geq -\frac{\beta}{2}>0,
\end{equation*}
due to \eqref{620_3}. Hence
$$\int_0^L\Big[\Big(3h_{1x}-3h_{2x}+\frac{1}{h_{1x}}-\frac{1}{h_{2x}}\Big)h_{1xx}\Big]^2\ud x\leq C(\beta,\|h_{1}(0)\|_{W^{m,2}})(h_{1x}-h_{2x})^2\ud x,$$
where $C(\beta,\|h_{1}(0)\|_{W^{m,2}})$ depends only on $\beta,\,\|h_{1}(0)\|_{W^{m,2}}.$
Then Young's inequality and interpolating show that
\begin{equation}\label{t20_3}
I_3\leq \int_0^L C(\beta,\|h_{1}(0)\|_{W^{m,2}})(h_{1}-h_{2})^2+\frac{1}{4}(h_{1xx}-h_{2xx})^2 \ud x,
\end{equation}
where $C(\beta,\|h_{1}(0)\|_{W^{m,2}})$ depends only on $\beta,\,\|h_{1}(0)\|_{W^{m,2}}.$
Now combining \eqref{t20_1}, \eqref{t20_2}, \eqref{t20_3} with \eqref{tem20_1} leads to
\begin{equation*}
\frac{\ud}{\ud t}\int_0^L (h_1-h_2)^2 \ud x\leq C(\beta,\|h_{1}(0)\|_{W^{m,2}})\int_0^L (h_1-h_2)^2 \ud x.
\end{equation*}
Then by Gr\"onwall's inequality, we have
\begin{equation}
\int_0^L (h_1-h_2)^2 \ud x\leq C(\beta,\|h_{1}(0)\|_{W^{m,2}},T_m)\int_0^L (h_1(0)-h_2(0))^2 \ud x,
\end{equation}
where $C(\beta,\|h_{1}(0)\|_{W^{m,2}},T_m)$ depends only on $\beta,\,\|h_{1}(0)\|_{W^{m,2}}$ and $T_m.$ This gives the uniqueness of the solution to \eqref{5}.
\end{proof}

\subsection{Stability of linearized $\phi$-PDE}\label{sec6.1}

Now we set up the stability of linearized $\phi$-PDE under assumption
$$h_x(0)\in W^{m,2}_{\per_0}(I),\quad h_x(0)\leq 2 \beta <0,$$
with $m\geq 6$.

Recall Theorem \ref{local_h} and Proposition \ref{h-phi}. There exists $T_m>0,$ such that
\begin{equation}\label{04_temp1}
\phi(\alpha,t)\in L^\infty([0,T_m];W^{6,\infty}_{\per^{\star}}(0,1))
\end{equation}
is the strong solution of \eqref{phi3}
 and there exists constants $m_1,\,m_2>0$ such that
\begin{equation}\label{04_temp2}
\begin{aligned}
\phi_\alpha\leq -m_1<0,\quad |\phi^{(i)}|\leq m_2,\,i=1,\cdots,6.
\end{aligned}
\end{equation}

Recall equation \eqref{phi3}:
$$\phi_t=-\phi_\alpha \mu_{xx}=-\partial_\alpha(\frac{1}{\phi_\alpha}(\frac{\delta E}{\delta \phi})_\alpha),
$$
where
\begin{equation*}
\frac{\delta E}{\delta \phi}=\frac{2\pi}{L^2}\PV\int_0^1 \cot\frac{\pi(\phi(\alpha)-\phi(\beta))}{L}\ud \beta -\frac{\phi_{\alpha\alpha}}{\phi_{\alpha}^2}-3\frac{\phi_{\alpha\alpha}}{\phi_{\alpha}^4}.
\end{equation*}
We want to show that the linearized $\phi$-PDE is stable, which will be used in the construction of high-order consistency solution (Section \ref{sec6.3}).

For $\phi,\,\tilde{\phi}$ satisfying equation \eqref{phi3}, set $\phi+\varepsilon \psi=\tilde{\phi}.$
Denote
\begin{equation}\label{A_H}
A:=-\frac{\phi_{\alpha\alpha}}{\phi_{\alpha}^2}-3\frac{\phi_{\alpha\alpha}}{\phi_{\alpha}^4}+\frac{2\pi}{L^2}\PV\int_0^1 \cot\frac{\pi(\phi(\alpha)-\phi(\beta))}{L}\ud \beta ,
\end{equation}
and
\begin{equation}\label{B_H}
B:=\biggl(-\frac{1}{\phi_\alpha^2}-3\frac{1}{\phi_\alpha^4}\biggr)\psi_{\alpha\alpha}
+\biggl(\frac{2\phi_{\alpha\alpha}}{\phi_\alpha^3}+\frac{12\phi_{\alpha\alpha}}{\phi_\alpha^5}\biggr)\psi_{\alpha}-\frac{2\pi^2}{L^3}\PV\int_0^1 \sec^2 \frac{\pi}{L}(\phi(\alpha)-\phi(\beta)) (\psi(\alpha)-\psi(\beta))\ud \beta.
\end{equation}
So the linearized equation of $\phi$-PDE \eqref{phi3} is
\begin{equation}\label{linear_1}
\psi_t=-\partial_\alpha \biggl( -\frac{\psi_\alpha}{\phi_\alpha^2} \partial_\alpha A+\frac{\partial_\alpha B}{\phi_\alpha} \biggr).
\end{equation}
\begin{prop}\label{prop17_1}
Assume $\psi(0)\in L^2_{\per}([0,1])$ and $m_1,m_2>0$ defined in \eqref{04_temp2}. Let $T_m>0$ be the maximal existence time for strong solution $\phi$ in \eqref{04_temp1}. The linearized equation \eqref{linear_1} is stable in the sense
\begin{equation}\label{621_1}
\|\psi(\cdot,t)\|_{L^2_{\per}([0,1])}\leq C(m_1,m_2,T_m)\|\psi(\cdot,0)\|_{L^2_{\per}([0,1])}, \text{ for }t\in[0,T_m],
\end{equation}
where $C(m_1,m_2,T_m)$ is a constant depending only on $m_1,\,m_2,$ and $T_m.$
\end{prop}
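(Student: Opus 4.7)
The plan is a standard $L^2$ energy estimate for the linear fourth-order parabolic equation \eqref{linear_1}, closed by Gr\"onwall's inequality. Since $\psi(0)\in L^2_{\per}([0,1])$ is too rough for the energy identity to be justified pointwise in time, I would first establish the estimate for smooth, mean-zero approximations of the initial data (standard variable-coefficient parabolic theory applies once one notes that the principal part identified in the next step is coercive, so that the linearized equation is well-posed in $H^k$ for $k$ large on $[0,T_m]$) and then pass to the limit by density.

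The key step is to identify the dissipation. Testing \eqref{linear_1} against $\psi$ and integrating by parts on $[0,1]$ using periodicity yields
\[
\tfrac{1}{2}\tfrac{\ud}{\ud t}\|\psi\|_{L^2}^2
= -\int_0^1\frac{\partial_\alpha A}{\phi_\alpha^2}\psi_\alpha^2\,\ud\alpha
-\int_0^1 B\,\Bigl(\frac{\psi_\alpha}{\phi_\alpha}\Bigr)_\alpha\ud\alpha,
\]
after one further integration by parts on the $\partial_\alpha B$ integral. Substituting \eqref{B_H} and pairing the $\psi_{\alpha\alpha}$-term $-(1/\phi_\alpha^2+3/\phi_\alpha^4)\psi_{\alpha\alpha}$ of $B$ against the leading $\psi_{\alpha\alpha}/\phi_\alpha$ piece of $(\psi_\alpha/\phi_\alpha)_\alpha$ produces
\[
\int_0^1\Bigl(\frac{1}{\phi_\alpha^3}+\frac{3}{\phi_\alpha^5}\Bigr)\psi_{\alpha\alpha}^2\,\ud\alpha \leq -\kappa\|\psi_{\alpha\alpha}\|_{L^2}^2,
\]
where $\kappa=\kappa(m_1,m_2)>0$ because $\phi_\alpha<0$ and $m_1\leq|\phi_\alpha|\leq m_2$ by \eqref{04_temp2} (in particular $|1/\phi_\alpha^3|\geq 1/m_2^3$). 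This is the coercive fourth-order dissipation that will absorb all other terms.

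For the bookkeeping, every remaining local contribution is an integral of products of $\psi,\psi_\alpha,\psi_{\alpha\alpha}$ against $L^\infty$-coefficients built from $\phi$ and its derivatives up to third order, uniformly bounded on $[0,T_m]$ by \eqref{04_temp2}. Young's inequality together with the periodic interpolation $\|\psi_\alpha\|_{L^2}^2\leq \varepsilon\|\psi_{\alpha\alpha}\|_{L^2}^2 + C_\varepsilon\|\psi\|_{L^2}^2$ reduces each such term to a piece of $(\kappa/4)\|\psi_{\alpha\alpha}\|_{L^2}^2$ plus $C(m_1,m_2)\|\psi\|_{L^2}^2$. For the nonlocal piece of $B$, with kernel $\sec^2\bigl(\tfrac{\pi}{L}(\phi(\alpha)-\phi(\beta))\bigr)$, I would perform the change of variables $\beta'=\phi(\beta)$ to recognize it as the periodic Hilbert transform applied to $\psi\circ\phi^{-1}$ plus smoothing corrections. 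Using the $L^p$-boundedness of $H$ (\cite[Proposition 9.1.3]{Butzer1971}) and the uniform bounds on $\phi,\phi_\alpha$, its contribution to the energy identity is controlled by $C(m_1,m_2)\|\psi\|_{L^2}\|\psi_{\alpha\alpha}\|_{L^2}$ and absorbed in the same way.

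Combining everything gives $\tfrac{\ud}{\ud t}\|\psi\|_{L^2}^2 \leq C(m_1,m_2)\|\psi\|_{L^2}^2$ on $[0,T_m]$, and Gr\"onwall's inequality delivers \eqref{621_1} with $C(m_1,m_2,T_m)=\exp(C(m_1,m_2)T_m/2)$. The main obstacle is the nonlocal term: one must carry out the change of variables into the standard Hilbert transform cleanly and track the commutators between $H$ and multiplication by $\phi$-dependent smooth functions, so that the remainder falls within the absorbable regime $C\|\psi\|_{L^2}\|\psi_{\alpha\alpha}\|_{L^2}$. The remaining local algebra is tedious but entirely routine given the uniform bounds \eqref{04_temp2}.
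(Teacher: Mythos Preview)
Your approach is essentially the paper's: an $L^2$ energy estimate, extraction of the negative-definite principal part $\int(1/\phi_\alpha^3+3/\phi_\alpha^5)\psi_{\alpha\alpha}^2$, absorption of local lower-order terms via Young and the interpolation $\|\psi_\alpha\|^2\le\varepsilon\|\psi_{\alpha\alpha}\|^2+C_\varepsilon\|\psi\|^2$, handling of the nonlocal piece by reducing to the periodic Hilbert transform and invoking its $L^p$-boundedness, and closure by Gr\"onwall. One point of detail: your description of the nonlocal term is slightly imprecise, since a direct change of variables $\beta'=\phi(\beta)$ turns the $\csc^2$ kernel into the derivative of the Hilbert kernel rather than the Hilbert kernel itself; the paper first integrates by parts in $\beta$ to rewrite the $\csc^2$ integral as a $\cot$ integral acting on $\psi_\beta/\phi_\beta^2$ and a lower-order commutator, and only then changes variables to the standard $H$. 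The outcome is the same bound $C\|\psi_{\alpha\alpha}\|_{L^2}(\|\psi_\alpha\|_{L^2}+\|\psi\|_{L^2})$ you claim, but this integration by parts is the step that actually makes it work.
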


\begin{proof}
Step 1. We perform without the Hilbert transform term $\frac{2\pi}{L^2}\PV\int_0^1 \cot\frac{\pi(\phi(\alpha)-\phi(\beta))}{L}\ud \beta .$
Then $A,\,B$ in \eqref{linear_1} become
 $$A:=-\frac{\phi_{\alpha\alpha}}{\phi_{\alpha}^2}-3\frac{\phi_{\alpha\alpha}}{\phi_{\alpha}^4},$$
and
$$B:=\biggl(-\frac{1}{\phi_\alpha^2}-3\frac{1}{\phi_\alpha^4}\biggr)\psi_{\alpha\alpha}
+\biggl(\frac{2\phi_{\alpha\alpha}}{\phi_\alpha^3}+\frac{12\phi_{\alpha\alpha}}{\phi_\alpha^5}\biggr)\psi_{\alpha}.$$

Because $\psi$ is $1$-periodic function respect to $\alpha$,
we have
\begin{equation*}
\begin{aligned}
\psi_t=&-\partial_\alpha \biggl( -\frac{\psi_\alpha}{\phi_\alpha^2} A_\alpha+\partial_\alpha \biggl(\frac{B}{\phi_\alpha}\biggr)- \biggl(\frac{1}{\phi_\alpha}\biggr)_\alpha B\biggr)\\
=&-\partial_{\alpha\alpha}\biggl(\frac{B}{\phi_\alpha}\biggr)+\partial_\alpha \biggl( \frac{\psi_\alpha}{\phi_\alpha^2} A_\alpha-\frac{\phi_{\alpha\alpha}}{\phi_\alpha^2} B\biggr)\\
=&-\partial_{\alpha\alpha}\bigg[ \biggl(-\frac{1}{\phi_\alpha^3}-\frac{3}{\phi_\alpha^5}\biggr)\psi_{\alpha\alpha}+ \biggl(\frac{2\phi_{\alpha\alpha}}{\phi_\alpha^4}+\frac{12\phi_{\alpha\alpha}}{\phi_\alpha^6}\biggr)\psi_\alpha\bigg]\\
&+\partial_\alpha \bigg[\biggl(\frac{\phi_{\alpha\alpha}}{\phi_\alpha^4}+\frac{3\phi_{\alpha\alpha}}{\phi_\alpha^6}\biggr)\psi_{\alpha\alpha}+\biggl(-\frac{2\phi_{\alpha\alpha}^2}{\phi_\alpha^5}-\frac{12\phi_{\alpha\alpha}^2}{\phi_\alpha^7}+\frac{A_\alpha}{\phi_{\alpha}^2}\biggr)\psi_\alpha\bigg].
\end{aligned}
\end{equation*}
Multiplying both sides by $\psi$ and integration by parts show that
\begin{align}\label{617_1}
\int_0^1 \psi\psi_t \ud\alpha =&\int_0^1 \Bigg[ \big(\frac{1}{\phi_\alpha^3}+\frac{3}{\phi_\alpha^5}\big)\psi_{\alpha\alpha}^2
-\big(\frac{3\phi_{\alpha\alpha}}{\phi_\alpha^4}+\frac{15\phi_{\alpha\alpha}}{\phi_\alpha^6}\big)\psi_\alpha\psi_{\alpha\alpha}
+\bigg( \frac{2\phi_{\alpha\alpha}^2}{\phi_\alpha^5}+\frac{12\phi_{\alpha\alpha}^2}{\phi_\alpha^7}-\frac{A_\alpha}{\phi_\alpha^2}\bigg) \psi_\alpha^2\Bigg]\ud\alpha.
\end{align}

From Young's inequality, for any $\delta,\varepsilon>0,$ we have
\begin{equation}\label{615_4}
\psi_{\alpha\alpha}\psi_\alpha\leq \varepsilon \psi_{\alpha\alpha}^2+\frac{1}{4\varepsilon}\psi_\alpha^2,
\end{equation}
and
\begin{equation}\label{615_5}
\int_0^1 \psi_\alpha^2\ud\alpha \leq  \int_0^1\biggl(\delta\psi_{\alpha\alpha}^2+\frac{1}{4\delta}\psi^2\biggr)\ud\alpha.
\end{equation}

Note that $\phi_\alpha$ is negative and from \eqref{04_temp1},\,\eqref{04_temp2}, we know
$$\frac{1}{\phi_\alpha^3}+\frac{3}{\phi_\alpha^5}\leq -\biggl(\frac{1}{m_2^3}+\frac{1}{m_2^5}\biggr).$$

Now choose $\varepsilon$, $\delta$ in \eqref{615_4} and \eqref{615_5} such that the last two terms in \eqref{617_1} can be controlled by $\int_0^1 -\biggl(\frac{1}{m_2^3}+\frac{1}{m_2^5}\biggr)\psi_{\alpha\alpha}^2 +C(m_1,m_2)\psi^2\ud\alpha.$
Therefore, combining \eqref{615_4}, \eqref{615_5} and \eqref{04_temp2}, we have
\begin{align}\label{psi_S}
\frac{\ud }{\ud t}\int_0^1 \psi^2 \ud\alpha +C(m_2)\int_0^1 \psi_{\alpha\alpha}^2 \ud\alpha\leq \int_0^1 C(m_1,m_2)\psi^2 \ud\alpha,
\end{align}
where $C(m_2),\,C(m_1,m_2)>0$ are constants depending on $m_1,\,m_2$.

By Gr\"onwall's inequality, we finally achieve the stability for $\psi$ in the sense of \eqref{621_1}.

Step 2. If we consider Hilbert transform, then $A,\,B$ are defined in \eqref{A_H} and \eqref{B_H}.
First notice that change of variable from $h$ to $\phi$ does not effect the Cauchy principal value integral and that $h_x<0$. Then for any $\alpha\in[0,1],$ by variable substitution, we have
\begin{equation}\label{623_1}
\begin{aligned}
&\PV \int_0^1 \frac{\pi}{L}\cot \Big( \frac{\pi}{L} \big(\phi(\alpha)-\phi(\beta)\big) \Big)\ud \beta
=-\PV \int_0^1 \sum_{k\in \mathbb{Z}}\frac{1}{\phi(\beta)-\phi(\alpha)-kL}\ud \beta\\
=&-\PV\int_{-\infty}^{+\infty}\frac{1}{\phi(\beta)-\phi(\alpha)}\ud \beta
=\PV\int_{-\infty}^{+\infty}\frac{h_y}{y-x}\ud y\\
=&\PV\sum_{k\in\mathbb{Z}}\int_{-\frac{L}{2}+kL}^{\frac{L}{2}+kL}\frac{h_y}{y-x}\ud y
=\frac{\pi}{L} \PV \int_{-\frac{L}{2}}^{\frac{L}{2}}h_y \cot(\frac{y-x}{L}\pi)\ud y\\
=&-\pi H(h_x)\circ \phi,
\end{aligned}
\end{equation}
where we used the relation for Hilbert kernel
$$\sum_{k\in\mathbb{Z}}\frac{1}{x+kL}=\frac{\pi}{L}\cot(\frac{\pi}{L}x).$$
Hence
\begin{align*}
\Big(\PV\int_0^1 \cot\frac{\pi}{L}(\phi(\alpha)-\phi(\beta))\ud \beta\Big)_{\alpha}=-L(H(h_{xx})\circ\phi) \phi_\alpha
\end{align*}
is $L^p$ bounded due to the property of Hilbert transform $H(u)_x=H(u_x)$ for $u_x\in L^p$ with $1<p<\infty.$

Second, using the periodicity of $\psi$, integration by parts shows that
\begin{align*}
&\frac{\pi}{L}\PV\int_0^1 \sec^2 \frac{\pi}{L}(\phi(\alpha)-\phi(\beta)) (\psi(\alpha)-\psi(\beta))\ud \beta\\
=&\PV \int_0^1 \cot\frac{\pi(\phi(\alpha)-\phi(\beta))}{L}\bigg[-\frac{\psi_\alpha(\beta)}{\phi_\alpha^2(\beta)}-\frac{(\psi(\alpha)-\psi(\beta))\phi_{\alpha\alpha}(\beta)}{\phi_\alpha^2(\beta)}\bigg] \ud\beta.
\end{align*}
For any $\varepsilon>0$, by Young's inequality, we have
\begin{align*}
&\int_0^1 \PV\int_0^1 \psi_{\alpha\alpha}(\alpha)(\psi(\alpha)-\psi(\beta))\sec^2 \frac{\pi}{L}(\phi(\alpha)-\phi(\beta)) \ud \beta  \ud\alpha\\
\leq & 2\varepsilon \int_0^1 \psi_{\alpha\alpha}^2 \ud\alpha +\frac{c}{\varepsilon}\int_0^1 \bigg[\PV \int_0^1 \cot\frac{\pi(\phi(\alpha)-\phi(\beta))}{L}\bigg(-\frac{\psi_\alpha(\beta)}{\phi_\alpha^2(\beta)}-\frac{(\psi(\alpha)-\psi(\beta))\phi_{\alpha\alpha}(\beta)}{\phi_\alpha^2(\beta)}\bigg) \ud\beta\bigg]^2  \ud\alpha.
\end{align*}
Similar to \eqref{623_1}, we have
\begin{align*}
&\PV \int_0^1 \cot\frac{\pi(\phi(\alpha)-\phi(\beta))}{L}\bigg(-\frac{\psi_\alpha(\beta)}{\phi_\alpha^2(\beta)}-\frac{(\psi(\alpha)-\psi(\beta))\phi_{\alpha\alpha}(\beta)}{\phi_\alpha^2(\beta)}\bigg) \ud\beta\\
=&\bigg[H(-\frac{\psi_\alpha}{\phi_\alpha^3}\circ h)+H(\frac{\phi_{\alpha\alpha}\psi}{\phi_\alpha^3}\circ h)+\psi(\alpha)H(\frac{-\phi_{\alpha\alpha}}{\phi_{\alpha}^3}\circ h)\bigg]\circ\phi.
\end{align*}
Then notice the property of Hilbert transform $\|H(u)\|_{L^p}\leq c\|u\|_{L^p}$ for $1<p<\infty;$ see \cite[Proposition 9.1.3]{Butzer1971}.
For any $\varepsilon,\delta>0$,
by H\"older's inequality and interpolating, we have
\begin{align*}
&\int_0^1 \PV\int_0^1 \psi_{\alpha\alpha}(\alpha)(\psi(\alpha)-\psi(\beta))\sec^2 \frac{\pi}{L}(\phi(\alpha)-\phi(\beta)) \ud \beta  \ud\alpha\\
\leq & 2\varepsilon \int_0^1 \psi_{\alpha\alpha}^2 \ud\alpha +\frac{c}{\varepsilon}\int_0^1\bigg[H(-\frac{\psi_\alpha}{\phi_\alpha^3}\circ h)+H(\frac{\phi_{\alpha\alpha}\psi}{\phi_\alpha^3}\circ h)+\psi(\alpha)H(\frac{-\phi_{\alpha\alpha}}{\phi_{\alpha}^3}\circ h) \bigg]^2\circ \phi\ud\alpha\\
\leq & 2\varepsilon \int_0^1 \psi_{\alpha\alpha}^2 \ud\alpha +\frac{c}{\varepsilon}\int_0^1\bigg[\frac{\psi_\alpha^2}{\phi_\alpha^5}+\frac{\phi_{\alpha\alpha}^2\psi^2}{\phi_\alpha^5}\bigg]\ud \alpha +\big(\int_0^1\psi^4(\alpha)\ud \alpha\big)^{\frac{1}{2}}\big(\int_0^1\frac{\phi_{\alpha\alpha}^4}{\phi_{\alpha}^{11}}\ud \alpha\big)^{\frac{1}{2}}\\
\leq &  \Big(2\varepsilon+\frac{\delta}{\varepsilon}\Big) \int_0^1 \psi_{\alpha\alpha}^2 \ud\alpha+\frac{C(m_1,m_2)}{\varepsilon\delta}\int_0^1 \psi(\alpha)^2 \ud \alpha
\end{align*}
where $C(m_1,m_2)$ depends only on $m_1,\,m_2.$ Here we used variable substitution twice and \eqref{04_temp2}.

Then we can perform just like Step 1 to get \eqref{psi_S} and complete the proof of Proposition \ref{prop17_1}.
\end{proof}

\section{Modified BCF type model}\label{sec5}

We want to rigorously study the continuum limit of a BCF type model and figure out the convergence rate.
From now on, we assume the initial data $x_i(0)$ satisfying
\begin{equation}\label{initial}
  x_i(0)<x_{i+1}(0), \text{ for }i=1,\cdots,N.
\end{equation}

As mentioned in the Introduction,
we need to modify the ODE as follows
\begin{equation}\label{ODE}
\frac{\ud x_i}{\ud t}=\frac{1}{a} \biggl(\frac{f_{i+1}-f_{i}}{x_{i+1}-x_i}-\frac{f_{i}-f_{i-1}}{x_{i}-x_{i-1}}\biggr),\quad i=1,\cdots,N,
\end{equation}
where the chemical potential
\begin{equation}\label{G}
\begin{aligned}
f_i:=-\frac{2}{L}\sum_{j\neq i}\frac{a}{x_j-x_i}+\biggl(\frac{1}{x_{i+1}-x_i}-\frac{1}{x_{i}-x_{i-1}}\biggr) +\biggl(\frac{a^2}{(x_{i+1}-x_i)^3}-\frac{a^2}{(x_i-x_{i-1})^3}\biggr),
\end{aligned}
\end{equation}
for $i=1,\cdots,N.$ Notice \eqref{ODE} with \eqref{G} is exact the ODE \eqref{ODE22} with \eqref{622_1}, so we refer \eqref{ODE} in the following.

From now on, keep in mind the relation between the Hilbert kernel and Cauchy kernel is
\begin{equation}\label{09temp1}
\sum_{k\in\mathbb{Z}}\frac{1}{x+kL}=\frac{\pi}{L}\cot(\frac{\pi}{L}x).
\end{equation}
The corresponding energy is
\begin{equation}\label{Energy}
E^N:=a^2\sum_{1\leq i<j\leq N} \frac{2}{L}\ln\big\lvert\sin  (\frac{\pi}{L}(x_j-x_i))|+a\sum_{i=0}^N\biggl(-\ln|\frac{x_i-x_{i+1}}{a}|+\frac{a^2}{2}\frac{1}{(x_i-x_{i+1})^2}\biggr).
\end{equation}
Since as $a \to 0$, we have $x_i = O(a)$, so the contribution of the various terms in $E^N$ is on the same order.
We have
$$ f_i=\frac{1}{a}\frac{\partial E^N}{\partial x_i},$$
and energy identity
\begin{equation}\label{EN_I}
\frac{\ud E^N}{\ud t}+\sum_{i=1}^N \frac{(f_{i+1}-f_i)^2}{x_{i+1}-x_i}=0,
\end{equation}
which is analogous to \eqref{E3.8}.

%
%
%

We will first study some properties of \eqref{ODE} and obtain the consistence result in Section \ref{consis}. Then we construct an auxiliary solution with high-order consistency in Section \ref{sec6.3}, which is important when we prove the convergence rate of the modified ODE system. After those preparations, the proof of Theorem \ref{thm_main} will be given in Section \ref{sec6.4}.

\subsection{Global solution of ODE}

In this section, we will prove that for any fixed $N\geq 2$, the ODE system \eqref{ODE} has
a global in time solution.
\begin{prop}\label{prop625}
Assume initial data satisfy \eqref{initial}. Then for any $N\geq 2$, the ODE system \eqref{ODE} has
a global in time solution.
\end{prop}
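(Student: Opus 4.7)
The plan is to combine the standard Picard–Lindel\"of local existence theorem with an a priori lower bound on the minimum step gap derived from the energy dissipation identity \eqref{EN_I}. The right-hand side of \eqref{ODE}–\eqref{G} defines a smooth (indeed real-analytic) vector field on the open configuration set
\[
D := \{(x_1,\ldots,x_N) \in \mathbb{R}^N : x_1 < x_2 < \cdots < x_N < x_1 + L\},
\]
which, under assumption \eqref{initial}, contains the initial datum. Picard–Lindel\"of then produces a unique $C^1$ solution on some maximal interval $[0,T^*)$ staying inside $D$; the goal is to show $T^* = +\infty$. Note that remaining in $D$ automatically preserves the monotonicity of the steps.

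Write $\epsilon_i(t) := x_{i+1}(t) - x_i(t) > 0$. The periodicity assumption \eqref{1} forces $\sum_{i=1}^N \epsilon_i(t) = L$, which keeps the gaps, and hence all pairwise distances $x_j - x_i$ with $1 \leq i<j \leq N$, bounded above by $L$. Consequently, the only mechanism by which the solution can leave $D$ in finite time is $\epsilon_{\min}(t) := \min_i \epsilon_i(t) \to 0$ as $t \to T^*$ (translation of the whole configuration is harmless since the dynamics depends only on differences). Since the energy identity \eqref{EN_I} yields
\[
E^N(t) \leq E^N(0) < +\infty, \qquad t \in [0,T^*),
\]
it suffices to prove that $E^N \to +\infty$ whenever $\epsilon_{\min} \to 0$; that would contradict the energy bound and force $T^* = +\infty$.

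To establish this last claim, I would dissect the expression \eqref{Energy} term by term. The strictly positive cubic contribution $\frac{a^3}{2}\sum_{i=1}^N \epsilon_i^{-2}$ alone is $\geq \tfrac{a^3}{2}\epsilon_{\min}^{-2}$. The broken-bond logarithmic term $-a\sum_i \ln(\epsilon_i/a)$ is uniformly bounded below (since $\epsilon_i \leq L$) and only diverges at rate $O(|\ln \epsilon_{\min}|)$. The Hilbert-type term $\frac{2a^2}{L}\sum_{i<j} \ln|\sin(\pi(x_j-x_i)/L)|$ is nonpositive and singular only when some $x_j - x_i$ is close to $0$ or to $L$; since $x_j - x_i = \sum_{k=i}^{j-1}\epsilon_k$ and the total is $L$, any such singularity forces a string of gaps each bounded by $\epsilon_{\min}$, so with at most $\binom{N}{2}$ offending pairs the aggregate negative contribution is at most $O(N^2 |\ln \epsilon_{\min}|)$ in magnitude. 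Combining these gives
\[
E^N(t) \geq \tfrac{a^3}{2}\,\epsilon_{\min}(t)^{-2} - C(a,N,L)\,|\ln \epsilon_{\min}(t)| - C(a,N,L),
\]
which tends to $+\infty$ as $\epsilon_{\min}(t) \to 0$.

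The main (and essentially only) obstacle is precisely this last comparison: one must rule out that the two logarithmic singularities could conspire to cancel the $\epsilon_{\min}^{-2}$ divergence. Fortunately this is a purely scaling observation, since $\epsilon_{\min}^{-2} \gg |\ln \epsilon_{\min}|$ as $\epsilon_{\min} \to 0^+$ regardless of the (finite) constants in front. Once the resulting a priori lower bound $\epsilon_{\min}(t) \geq \epsilon_*(E^N(0),a,N,L) > 0$ holds on $[0,T^*)$, the solution stays in a compact subset of $D$ bounded away from $\partial D$, and the standard continuation criterion for ODEs forces $T^* = +\infty$, completing the proof.
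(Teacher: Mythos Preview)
Your approach is essentially the same as the paper's: both arguments combine the energy dissipation $E^N(t)\le E^N(0)$ with the observation that the nearest-neighbor inverse-square term $\frac{a^3}{2}\epsilon_{\min}^{-2}$ in \eqref{Energy} dominates every logarithmic singularity, yielding an a priori positive lower bound on the minimum gap and hence global existence. Your term-by-term accounting is in fact a bit cleaner than the paper's pairwise combination; one small point to tighten is the final continuation step: $D$ is translation-invariant and hence not compact, so to conclude you should either note that the center of mass $\sum_i x_i$ is conserved (sum \eqref{ODE} and telescope) or, as the paper does, observe that once $\epsilon_{\min}$ is bounded below the right-hand side of \eqref{ODE} is uniformly bounded, ruling out finite-time blow-up.
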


\begin{proof}
Let $T_{\max}$ be the maximal existence time. Then if $T_{\max}<+\infty$, from standard Extension Theory for ODE, we know either two steps collide, i.e.
there exists $i$, such that $x_i(T_{\max})=x_{i+1}(T_{\max});$ or step reaches infinity, i.e. $x_i(T_{\max})=+\infty.$

Denote
$$\ell_{min}(t):=\min_{i\in\mathbb{N}}\{x_{i+1}(t)-x_i(t)\},$$
and we state a proposition that we have a positive lower bound for
$\ell_{min}(t).$ We will proof this proposition later.

\begin{prop}\label{lem5.1}
For any $N\geq 2$, assume initial data satisfy \eqref{initial} and system \eqref{ODE} has initial energy $E^N(0)$. Then for any time $t$ the solution of \eqref{ODE} exist, we have
$$\ell_{min}(t)\geq C(N)>0,$$
where $C(N)$ is a constant depending only on $N$.
\end{prop}

By Proposition \ref{lem5.1}, we have
$$\ell_{\min}(T_{\max})\geq \lim_{t\rightarrow T_{\max}} \ell_{\min}(t)\geq C(N)>0,$$
which contradicts with $x_i(T_{\max})=x_{i+1}(T_{\max}).$

On the other hand, combining Proposition \ref{lem5.1} with equation \eqref{ODE} gives
$$\max_{1\leq i\leq N}\lvert\dot{x}_i \rvert\leq C(N),$$
where $C(N)$ is a constant depending only on $N$. Hence there will be no finite time blow up and we conclude $T_{\max}=+\infty.$
\end{proof}

\begin{proof}[Proof of Proposition \ref{lem5.1}.]

  First from \eqref{EN_I}, we know, for any time $t$ the solution exist,
$$E^N(t)\leq E^N(0).$$

Let $0<\ell^\star\leq 1$ small enough. Then
$$\frac{2\pi}{L^2}\cot\frac{\pi}{L}\ell-\frac{1}{2}\frac{a^2}{\ell^3}<0,\quad \text{for }0<\ell\leq \ell^\star.$$
Thus, at least for $0<\ell\leq \min\{\ell^\star,\frac{L}{2}\}$, we know
$$g(\ell):=\frac{2}{L}\ln \Big(\sin \frac{\pi}{L}\ell\Big)+\frac{a^2}{4\ell^2}$$
is positive, i.e.
$$\frac{2}{L}\ln \sin \frac{\pi}{L}\ell+\frac{a^2}{4\ell^2}> 0.$$

Hence
$$\frac{2}{L}\ln \sin \frac{\pi}{L}\ell+\frac{a^2}{2\ell^2}> \frac{a^2}{4\ell^2} ,$$
and
$$\frac{2}{L}\ln(\sin\frac{\pi}{L}\ell)-\ln\big(\frac{\ell}{a}\big)+\frac{a^2}{2\ell^2}> \frac{a^2}{4\ell^2}+\ln a\geq c_0(N),$$
where $c_0(N)$ is a constant depending only on $N$.
Then
we obtain
\begin{align*}
E^N\geq& a^2\bigg[\frac{2}{L}\ln(\sin(\frac{\pi}{L}\ell_{min})) -\ln(\ell_{min})+\ln a+\frac{a^2}{2\ell^2_{min}}+(\frac{N(N-1)}{2}-1)c_0(N)\bigg]\\
\geq & \frac{a^4}{4\ell_{min}^2}+c_1(N),
\end{align*}
where $c_1(N)$ is a constant depending only on $N$.

Therefore, we have
$$\frac{1}{\ell_{min}^2}\leq C(N,E^N(0)),$$
where $C(N,E^N(0))$ is a positive constant depending only on $N$ and initial data.

So
we finally get
$$\ell_{min}\geq \min\{\frac{L}{2},\ell^\star, \frac{1}{\sqrt{C(N,E^N(0))}} \}.$$
\end{proof}

\section{Consistency}\label{consis}

In this section, we study the local consistency between exact solution $\phi$ of equation \eqref{phi3} and solution $x$ of equation \eqref{ODE}.
From now on, we always assume there exists a constant $\beta<0$ such that the initial data satisfy
$$h_x(0)\in W^{m,2}_{\per_0}(I),\quad h_x(0)\leq 2 \beta <0,$$
with $m\geq 6$.

From Theorem \ref{local_h}, we know there exists $T_m>0$, for $t\in[0,T_m],$ 
$h(x,t)\in L^\infty([0,T];W^{6,\infty}_{\per^{\star}}(\mathbb{R})) $ is the strong solution of \eqref{5} and
\begin{equation}\label{eta}
\begin{aligned}
h_x\leq \beta<0.
\end{aligned}
\end{equation}
Also by Proposition \ref{h-phi}, we know $\phi(\alpha,t)$ is the strong solution of \eqref{phi3} satisfying \eqref{04_temp1} and \eqref{04_temp2}.

Denote
\begin{equation}\label{fbar}
\begin{aligned}
\bar{f}_i:=-\frac{2}{L}\sum_{j\neq i}\frac{a}{\phi_j-\phi_i}+\biggl(\frac{1}{\phi_{i+1}-\phi_i}-\frac{1}{\phi_{i}-\phi_{i-1}}\biggr)
+\biggl(\frac{a^2}{(\phi_{i+1}-\phi_i)^3}-\frac{a^2}{(\phi_i-\phi_{i-1})^3}\biggr).
\end{aligned}
\end{equation}
The main result in this section is Theorem \ref{th4.4}:
\begin{thm}\label{th4.4}
For all $i=1,\cdots,N$, let $\bar{f}_i$ be defined in \eqref{fbar},
and
\begin{equation}\label{ralpha0}
v_1(\alpha;\phi):=-\frac{\phi_{\alpha\alpha}}{2\phi_\alpha^2}(\alpha),\quad
r_0(\alpha;\phi):=\biggl(\frac{v_{1\alpha} \phi_{\alpha\alpha}}{\phi_\alpha^2}-\frac{v_{1\alpha\alpha}}{\phi_\alpha}\biggr)(\alpha).
\end{equation}
Then we have
\begin{equation}\label{consistency1}
\frac{\ud \phi_i}{dt}=\frac{1}{a} \biggl(\frac{\bar{f}_{i+1}-\bar{f}_{i}}{\phi_{i+1}-\phi_i}-\frac{\bar{f}_{i}-\bar{f}_{i-1}}{\phi_{i}-\phi_{i-1}}\biggr)+r_0(\alpha_i;\phi)a+R_i a^2, \quad t\in[0,T],
\end{equation}
and
\begin{equation}\label{consistency2}
|r_0(\alpha_i;\phi)|\leq C( \beta,\|h(0)\|_{W^{7,2}(I)}),\quad |R_i|\leq C( \beta,\|h(0)\|_{W^{7,2}(I)}),
\end{equation}
where $C( \beta,\|h(0)\|_{W^{7,2}(I)})$ depends on $\beta,\|h(0)\|_{W^{7,2}(I)}$, and $R_i$ is defined in \eqref{0502t1}.
In addition, we have
$$\frac{\ud E^N(\phi)}{\ud t}+a \sum_{i=1}^N \biggl(\frac{\bar{f}_{i+1}-\bar{f}_i}{a}\biggr)^2\leq Ca.$$
\end{thm}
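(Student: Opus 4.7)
The plan is to split the proof into two asymptotic expansions and then combine them. \textbf{First expansion:} show that the discrete chemical potential satisfies $\bar f_i = \mu(\alpha_i) - v_1(\alpha_i;\phi)\,a + O(a^2)$, where $\mu$ is the continuum chemical potential given by \eqref{Frac_phi}. \textbf{Second expansion:} define the double divided-difference operator $QF:=\tfrac{1}{a}\bigl[\tfrac{F_{i+1}-F_i}{\phi_{i+1}-\phi_i}-\tfrac{F_i-F_{i-1}}{\phi_i-\phi_{i-1}}\bigr]$ and show that for smooth $F$, $QF=-\partial_\alpha(F_\alpha/\phi_\alpha)(\alpha_i)+O(a^2)$ (the minus sign appears because the indexing satisfies $\alpha_{i\pm1}=\alpha_i\mp a$). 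Combining the two, $Q\bar f = Q\mu - a\,Qv_1 + O(a^2) = \phi_t(\alpha_i) - a\,r_0(\alpha_i;\phi)+O(a^2)$, using the PDE \eqref{phi3} and the identity $r_0=-\partial_\alpha(v_{1\alpha}/\phi_\alpha)$ that follows at once from \eqref{ralpha0}. Rearranging gives \eqref{consistency1}.

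For the first expansion I would treat the three summands in \eqref{fbar} separately. The local repulsive terms are handled by Taylor-expanding $\phi(\alpha_i\mp a)$ to fourth order around $\alpha_i$ and carrying out the symmetric difference, yielding $(\phi_{i+1}-\phi_i)^{-1}-(\phi_i-\phi_{i-1})^{-1}=-\phi_{\alpha\alpha}/\phi_\alpha^2(\alpha_i)+O(a^2)$ and $a^2\bigl[(\phi_{i+1}-\phi_i)^{-3}-(\phi_i-\phi_{i-1})^{-3}\bigr]=-3\phi_{\alpha\alpha}/\phi_\alpha^4(\alpha_i)+O(a^2)$, matching the last two terms of $\mu$ with $O(a^2)$ accuracy. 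The entire $O(a)$ correction therefore originates in the nonlocal sum $-\tfrac{2}{L}\sum_{j\neq i}\tfrac{a}{\phi_j-\phi_i}$. Using the periodic cotangent identity \eqref{09temp1} to recast it as $-\tfrac{2\pi a}{L^2}\sum_{j\neq i}\cot(\pi(\phi_j-\phi_i)/L)$, I decompose the cotangent integrand into an explicit singular part proportional to $\cot(\pi(\beta-\alpha_i))$ (whose discrete sum vanishes by the antisymmetry of $\cot(\pi k/N)$ on the grid and whose PV integral vanishes on $[0,1]$) plus a smooth $1$-periodic remainder. Computing the removable-singularity value of the remainder at $\beta=\alpha_i$ via local Taylor expansion of $\phi(\beta)-\phi(\alpha_i)$, and invoking the super-algebraic accuracy of the trapezoidal rule on smooth periodic functions, identifies the missing-gridpoint correction as exactly $-v_1(\alpha_i;\phi)\,a$ modulo $O(a^2)$. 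For the second expansion, a standard midpoint Taylor expansion of $F$ and $\phi$ around $\alpha_i\mp a/2$ gives $(F_{i+1}-F_i)/(\phi_{i+1}-\phi_i)=(F_\alpha/\phi_\alpha)(\alpha_i-a/2)+O(a^2)$ and similarly at $\alpha_i+a/2$; subtracting and dividing by $a$ yields the asserted asymptotic.

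The pointwise bounds \eqref{consistency2} follow from tracking the remainders: both $r_0$ and $R_i$ are rational expressions in derivatives of $\phi$ up to order seven divided by bounded powers of $\phi_\alpha$, so they are controlled by Sobolev embedding $W^{7,2}\hookrightarrow C^5$ combined with the lower bound $|\phi_\alpha|\geq m_1>0$ coming from \eqref{eta} and Proposition \ref{h-phi}. For the energy identity, differentiate $E^N(\phi)$ along the continuum trajectory: $\tfrac{\ud}{\ud t}E^N(\phi)=a\sum_i\bar f_i\,\dot\phi_i$ because $\bar f_i=a^{-1}\partial E^N/\partial\phi_i$. Substituting $\dot\phi_i$ from \eqref{consistency1} and applying discrete summation-by-parts (the analog of \eqref{EN_I} at the level of $\phi$ rather than $x$) converts the leading contribution into $-\sum_i(\bar f_{i+1}-\bar f_i)^2/(\phi_{i+1}-\phi_i)$, which is comparable to $-a\sum_i\bigl((\bar f_{i+1}-\bar f_i)/a\bigr)^2$ since $\phi_{i+1}-\phi_i=a|\phi_\alpha|+O(a^2)$ with $|\phi_\alpha|$ uniformly bounded above and below; the $O(a)$ consistency error multiplied by the uniformly bounded sum $\sum_i a|\bar f_i|$ accounts for the stated $O(a)$ defect. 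The main obstacle I anticipate is the sharp second-order analysis of the nonlocal sum: isolating the coefficient $-v_1$ requires the careful separation of the explicit singular kernel (where the discrete and continuum versions cancel exactly), the super-algebraic convergence of the trapezoidal rule on the smooth periodic remainder, and bookkeeping the local Taylor expansion of $\phi(\beta)-\phi(\alpha_i)$ near the diagonal $\beta=\alpha_i$ to the correct order.
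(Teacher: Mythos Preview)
Your overall architecture matches the paper's: expand $\bar f_i$ about the continuum chemical potential, then expand the second divided-difference operator $Q$, and combine. Your treatment of the nonlocal sum is genuinely different and, in a sense, cleaner. The paper appeals to the singular Euler--Maclaurin formula of Sidi--Israeli (their Lemma~\ref{l4.2}) to approximate the principal-value integral by the discrete cotangent sum; you instead subtract the reference kernel $\cot(\pi(\beta-\alpha_i))$, observe that both its PV integral and its discrete sum vanish by oddness, and handle the smooth periodic remainder by trapezoidal super-convergence, so that the entire $O(a)$ term comes from the missing diagonal point. This works because the residues of $\tfrac{\pi}{L}\cot\bigl(\tfrac{\pi}{L}(\phi(\alpha_i)-\phi(\beta))\bigr)$ at all integer translates of $\alpha_i$ coincide (by $1$-periodicity of $\phi_\alpha$), so the difference really is smooth and periodic.

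There is, however, a real gap in your bookkeeping. You write $\bar f_i=\mu(\alpha_i)-v_1(\alpha_i)a+O(a^2)$ and then assert $Q\bar f=Q\mu-aQv_1+O(a^2)$. But $Q$ is a second difference divided by $a$ with step comparable to $a$, so on an \emph{unstructured} sequence $w_i$ with $|w_i|\le Ca^2$ one only gets $|Qw|\le C$, not $Ca^2$. The paper handles this by pushing the expansion of $\bar f_i$ one order further: their Lemmas~\ref{l4.1}--\ref{l4.3} give $\bar f_i=A(\phi_i)+v_1(\alpha_i)a+(v_2+v_3)(\alpha_i)a^2+R_{4,i}$ with $|R_{4,i}|\le Ca^4$. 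The point is that the $a^2$ contribution is the value at $\alpha_i$ of a \emph{smooth} function, so $Q\bigl((v_2+v_3)a^2\bigr)=O(a^2)$, while the structureless residual is small enough that $Q(R_{4,i})=O(a^2)$ as well. Your approach actually delivers this for free---the trapezoidal remainder is $O(a^k)$ for any $k$ permitted by the regularity, and the local Taylor expansions obviously have the form $g(\alpha_i)a^2+O(a^4)$---but you must state and use this structure explicitly, not collapse it to a bare $O(a^2)$, or the passage from $\bar f$ to $Q\bar f$ does not go through.
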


To achieve this goal, first we need to set up some notations and lemmas.

From \eqref{04_temp1} and \eqref{04_temp2}, there exist constants $c_1,\,c_2>0$, such that
\begin{equation}\label{y}
c_1 a \leq \phi_{i+1}-\phi_i \leq c_2 a.
\end{equation}

Denoting
\begin{equation}\label{Fi}
F_i:=\frac{1}{a} \biggl(\frac{\bar{f}_{i+1}-\bar{f}_{i}}{\phi_{i+1}-\phi_i}-\frac{\bar{f}_{i}-\bar{f}_{i-1}}{\phi_{i}-\phi_{i-1}}\biggr),
\end{equation}
we want to estimate the difference between $F_i$ and $\frac{\ud
  \phi_i}{\ud t}$. From PDE \eqref{5} and \eqref{fan}, we
have
\begin{equation}\label{4.2}
\frac{\ud \phi_i}{\ud t}=-\frac{(-\lpi H(h_x)+(\frac{1}{h_x}+3h_x)h_{xx})_{xx}}{h_x}\biggr|_{\phi_i}.
\end{equation}
The main task is then to calculate the term $F_i$. Let us first
estimate $\bar{f}_i$ till order $a$ accuracy by writing
\begin{equation*}
  \bar{f}_i = I_{1, i} + I_{2, i} + I_{3, i},
\end{equation*}
where
\begin{equation}\label{I123}
\begin{aligned}
I_{1,i}&:=-\frac{2}{L}\sum_{j\neq i}\frac{a}{\phi_j-\phi_i}=-\frac{2}{L}\sum_{k\in\mathbb{Z}}\sum_{j= 1\atop j\neq i}^{N}\frac{a}{\phi_j-\phi_i+kL},\\
I_{2,i}&:=\frac{1}{\phi_{i+1}-\phi_i}-\frac{1}{\phi_{i}-\phi_{i-1}},\\
I_{3,i}&:=\frac{a^2}{(\phi_{i+1}-\phi_i)^3}-\frac{a^2}{(\phi_i-\phi_{i-1})^3}.
\end{aligned}
\end{equation}
To simplify notations, we will henceforth denote
$$\varphi_i=\varphi(x)|_{x=x_i}.$$
Next, we state four lemmas to estimate $I_{1,i},\,I_{2,i},\,I_{3,i}$ one by one, from which, we know $O(a)$ error only show up when estimating the first term $I_{1,i}$ in Lemma \ref{ll}.

\begin{lem}\label{l4.1}
Let $I_{2,i}$ be defined in \eqref{I123} and
$v_2$ be function of $\alpha$ defined as
\begin{equation}\label{u2}
v_2(\alpha;\phi):=-\frac{\phi^{(4)}}{12\phi_\alpha^2}+\frac{\phi_{\alpha\alpha}}{\phi_\alpha^4}\Bigl(\frac{1}{3}\phi_\alpha \phi^{(3)}-\frac{1}{4}\phi_{\alpha\alpha}^2\Bigr).
\end{equation}
Then we have
\begin{equation}\label{I2}
I_{2,i}=\frac{h_{xx}}{h_x}\biggr|_{\phi_i}+v_2(\alpha_i;\phi) a^2+R_{2,i},
\end{equation}
 where $|R_{2,i}|\leq  a^4 C( \beta,\|h(0)\|_{W^{7,2}(I)})$.
\end{lem}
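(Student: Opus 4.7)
The plan is to Taylor expand the centered discrete expression $I_{2,i}(a) = 1/(\phi_{i+1}-\phi_i) - 1/(\phi_i-\phi_{i-1})$ in powers of the lattice spacing $a$. Writing $p := \phi_{i+1} - \phi_i$ and $q := \phi_i - \phi_{i-1}$, one has $I_{2,i} = (q-p)/(pq)$, and the task reduces to identifying the $a^0$ and $a^2$ coefficients of this quotient and bounding the remainder in terms of $\beta$ and $\|h(0)\|_{W^{7,2}(I)}$.

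Before computing, I would exploit a parity argument that both organizes the computation and sharpens the order of the remainder. Define $G(a) := 1/[\phi(\alpha_i+a) - \phi(\alpha_i)]$; then $1/p = G(a)$ and $1/q = -G(-a)$, so
$$I_{2,i} = G(a) + G(-a)$$
is manifestly an even function of $a$. Because $|\phi_\alpha| \geq m_1 > 0$ by \eqref{04_temp2}, the map $a \mapsto a G(a)$ extends smoothly across $a = 0$ with value $1/\phi_\alpha$, and hence $I_{2,i}$ admits a Taylor expansion in \emph{even} powers of $a$ alone. This guarantees a priori that once the constant and $a^2$ coefficients are pinned down, the remainder is $O(a^4)$, not merely $O(a^3)$.

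To identify those two coefficients explicitly, I would introduce $s := (p+q)/2$ and $d := (p-q)/2$, so that $pq = s^2 - d^2$ and $q - p = -2d$. Taylor expansion around $\alpha_i$ gives $s = a\phi_\alpha + \tfrac{a^3}{6}\phi^{(3)} + O(a^5)$ and $d = \tfrac{a^2}{2}\phi_{\alpha\alpha} + \tfrac{a^4}{24}\phi^{(4)} + O(a^6)$. Feeding these into $pq = a^2\phi_\alpha^2 + a^4\bigl(\tfrac{1}{3}\phi_\alpha\phi^{(3)} - \tfrac{1}{4}\phi_{\alpha\alpha}^2\bigr) + O(a^6)$, inverting via the geometric series, and multiplying by $-2d$ yields
$$I_{2,i} = -\frac{\phi_{\alpha\alpha}}{\phi_\alpha^2} + a^2\left(-\frac{\phi^{(4)}}{12\phi_\alpha^2} + \frac{\phi_{\alpha\alpha}}{\phi_\alpha^4}\Bigl(\tfrac{1}{3}\phi_\alpha\phi^{(3)} - \tfrac{1}{4}\phi_{\alpha\alpha}^2\Bigr)\right) + O(a^4).$$
Using \eqref{fan} in the form $h_{xx}/h_x = -\phi_{\alpha\alpha}/\phi_\alpha^2$, the leading term is exactly $(h_{xx}/h_x)|_{\phi_i}$, and the $a^2$ coefficient matches $v_2(\alpha_i;\phi)$ as defined in \eqref{u2}.

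For the remainder bound, the $a^4$ Taylor tail depends polynomially on $\phi^{(k)}$ up to order $k = 6$ and on $1/|\phi_\alpha|$. By Proposition \ref{h-phi}, Theorem \ref{local_h}, and the one-dimensional Sobolev embedding $W^{7,2}(I) \hookrightarrow C^6(I)$, these quantities are controlled uniformly by $\beta$ and $\|h(0)\|_{W^{7,2}(I)}$, yielding the stated bound on $R_{2,i}$. The main obstacle is organizing the cancellations cleanly: a naive term-by-term expansion of $1/p - 1/q$ produces several $O(a)$ and $O(a^3)$ contributions that disappear only after careful regrouping, and the parity observation above is what makes this cancellation automatic and promotes the remainder from the naive $O(a^3)$ to the sharper $O(a^4)$ needed for the downstream consistency and convergence-rate analysis.
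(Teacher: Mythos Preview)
Your proof is correct and follows essentially the same route as the paper: Taylor expand $\phi_{i\pm1}$ about $\alpha_i$, form the quotient $(q-p)/(pq)$, and read off the $a^0$ and $a^2$ coefficients. The paper carries out this expansion by brute force on the numerator $2\phi_i-\phi_{i+1}-\phi_{i-1}$ and the denominator $(\phi_{i+1}-\phi_i)(\phi_i-\phi_{i-1})$ separately; your parity observation $I_{2,i}=G(a)+G(-a)$ and the $s,d$ bookkeeping are a slightly cleaner packaging of the same computation, with the bonus that the absence of odd-order terms (and hence the $O(a^4)$ remainder) is explained structurally rather than emerging from cancellation. One minor slip: with the paper's convention $\alpha_{i+1}=\alpha_i-a$, your formula $1/p=G(a)$ has the sign of $a$ reversed in the definition of $G$, but since the argument only uses that $I_{2,i}$ is even in $a$ this does not affect anything.
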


\begin{proof}
Notice we have
\begin{align}
& \phi_{i+1}=\phi_i-\phi_{\alpha,i} a+\frac{1}{2} \phi_{\alpha\alpha,i} a^2-\frac{1}{3!}\phi^{(3)}_i a^3 + \frac{1}{4!}\phi^{(4)}_i a^4- \frac{1}{5!}\phi^{(5)}_i a^5+\frac{1}{6!}\phi^{(6)}(\xi^+)a^6,\\
& \phi_{i-1}=\phi_i+\phi_{\alpha,i} a+\frac{1}{2} \phi_{\alpha\alpha,i} a^2+\frac{1}{3!}\phi^{(3)}_i a^3 + \frac{1}{4!}\phi^{(4)}_i a^4+ \frac{1}{5!}\phi^{(5)}_i a^5+\frac{1}{6!}\phi^{(6)}(\xi^-)a^6,
\end{align}
where $\xi^+\in[\alpha_i,\alpha_{i+1}],\,\xi^-\in[\alpha_{i-1},\alpha_i].$

Hence, using \eqref{fan}, we have
\begin{equation*}
\begin{aligned}
I_{2,i}&= \frac{1}{\phi_{i+1}-\phi_i}-\frac{1}{\phi_i-\phi_{i-1}} \\
&= \frac{\frac{2\phi_i-\phi_{i+1}-\phi_{i-1}}{a^2}}{(\frac{\phi_{i+1}-\phi_i}{a})(\frac{\phi_i-\phi_{i-1}}{a})}\\
&= \biggl(-\phi_{\alpha\alpha,i}-\frac{1}{12}\phi^{(4)}_i a^2-\frac{1}{6!}(\phi^{(6)}(\xi^+)+\phi^{(6)}(\xi^-)) a^4\biggr)\\
&\quad\cdot\frac{1}{-\phi_{\alpha,i} +\frac{1}{2} \phi_{\alpha\alpha,i} a-\frac{1}{3!}\phi^{(3)}_i a^2 + \frac{1}{4!}\phi^{(4)}_i a^3- \frac{1}{5!}\phi^{(5)} a^4 + \frac{1}{6!} \phi^{(6)}(\xi^+) a^5}\\
&\quad\cdot\frac{1}{-\phi_{\alpha,i} -\frac{1}{2} \phi_{\alpha\alpha,i} a-\frac{1}{3!}\phi^{(3)}_i a^2 - \frac{1}{4!}\phi^{(4)}_i a^3- \frac{1}{5!}\phi^{(5)} a^4 -   \frac{1}{6!} \phi^{(6)}(\xi^-) a^5 }\\
&= \frac{-\phi_{\alpha\alpha,i}-\frac{1}{12}\phi^{(4)}_i a^2-\frac{1}{6!}(\phi^{(6)}(\xi^+)+\phi^{(6)}(\xi^-)) a^4}{(\phi_{\alpha,i}^{2} +A_1(\alpha_i;\phi) a^2 + A_{2,i} a^4)}\\
&= \biggl(\frac{-\phi_{\alpha\alpha}}{\phi_\alpha^2}\biggr)_i+\biggl(-\frac{\phi^{(4)}}{12\phi_\alpha^2}+\frac{\phi_{\alpha\alpha}}{\phi_\alpha^4}A_1\biggr)_i a^2+A_{3,i} a^4\\
&= \biggl(\frac{h_{xx}}{h_x^2}\biggr)_i+\biggl(-\frac{\phi^{(4)}}{12\phi_\alpha^2}+\frac{\phi_{\alpha\alpha}}{\phi_\alpha^4}A_1\biggr)_i a^2+A_{3,i} a^4,
\end{aligned}
\end{equation*}
where
$$A_1(\alpha;\phi)=\frac{1}{3}\phi_\alpha \phi^{(3)}-\frac{1}{4}\phi_{\alpha\alpha}^2,\quad |A_{2,i}|\leq c,\quad |A_{3,i}|\leq c.$$
Denote
\begin{equation}
v_2(\alpha;\phi)=-\frac{\phi^{(4)}}{12\phi_\alpha^2}+\frac{\phi_{\alpha\alpha}}{\phi_\alpha^4}A_1,
\end{equation}
we complete the proof of Lemma \ref{l4.1}.
\end{proof}

Now we claim an approximation for periodic Hilbert transform.
\begin{lem}\label{l4.2}
For any $\phi(\alpha_i)$, $i=1,\cdots,N,$  we have
\begin{equation}\label{lq4.2}
\PV\int_0^1 \frac{\pi}{L}\cot(\frac{\pi}{L}(\phi(\alpha_i)-\phi(\alpha)))\ud\alpha
=\sum_{j\neq i,j=1}^{N}a \frac{\pi}{L}\cot(\frac{\pi}{L}(\phi(\alpha_i)-\phi_j))+\frac{a}{2}\frac{\phi_{\alpha\alpha}}{\phi_\alpha^2}+R_{1,i},
\end{equation}
where $|R_{1,i}|\leq  a^4C( \beta,\|h(0)\|_{W^{7,2}(I)})$.
\end{lem}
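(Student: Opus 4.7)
My plan is to prove this via the singularity-subtraction method: subtract from $K(\alpha):=\frac{\pi}{L}\cot\frac{\pi}{L}(\phi(\alpha_i)-\phi(\alpha))$ a simpler explicitly-computable kernel with the same leading Cauchy singularity at $\alpha=\alpha_i$, leaving a smooth $1$-periodic remainder on which the midpoint rule is spectrally accurate.

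Define the auxiliary kernel
\[
\tilde K(\alpha):=-\frac{1}{\phi_\alpha(\alpha_i)}\,\pi\cot\pi(\alpha-\alpha_i),
\]
which is $1$-periodic in $\alpha$ and shares the leading pole $-\frac{1}{\phi_\alpha(\alpha_i)(\alpha-\alpha_i)}$ with $K$. Local Taylor expansion $\phi(\alpha_i+s)-\phi(\alpha_i)=\phi_\alpha(\alpha_i)s+\tfrac12\phi_{\alpha\alpha}(\alpha_i)s^2+\cdots$ combined with the Laurent series of $\cot$ shows that both poles cancel, so $K_{\mathrm{smooth}}:=K-\tilde K$ is smooth at $\alpha_i$ with $K_{\mathrm{smooth}}(\alpha_i)=\frac{\phi_{\alpha\alpha}(\alpha_i)}{2\phi_\alpha^2(\alpha_i)}$. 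The reason this substitution is useful is that the contributions of $\tilde K$ vanish on both sides of \eqref{lq4.2}: $\PV\!\int_0^1\tilde K\,\ud\alpha=0$ by the antiderivative $\ln|\sin\pi(\alpha-\alpha_i)|$ together with $\sin\pi(1-\alpha_i)=\sin\pi\alpha_i$, and $\sum_{j\neq i}a\,\tilde K(\alpha_j)=0$ by the classical identity $\sum_{m=1}^{N-1}\cot\frac{\pi m}{N}=0$ (pair $m\leftrightarrow N-m$ using $\cot(\pi-x)=-\cot x$).

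These two vanishing identities reduce \eqref{lq4.2} to the periodic midpoint-rule error for $K_{\mathrm{smooth}}$:
\[
\PV\!\int_0^1 K\,\ud\alpha-\sum_{j\neq i}a\,K(\alpha_j)-a\,K_{\mathrm{smooth}}(\alpha_i)=\int_0^1 K_{\mathrm{smooth}}\,\ud\alpha-\sum_{j=1}^N a\,K_{\mathrm{smooth}}(\alpha_j).
\]
The term $a\,K_{\mathrm{smooth}}(\alpha_i)$ on the left contributes precisely the claimed correction $\frac{a}{2}\frac{\phi_{\alpha\alpha}(\alpha_i)}{\phi_\alpha^2(\alpha_i)}$. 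Since $K_{\mathrm{smooth}}$ is $1$-periodic and smooth, the Euler–Maclaurin formula has no boundary corrections and bounds the right-hand side by $C\,a^4\|K_{\mathrm{smooth}}^{(4)}\|_{L^\infty}$ (equivalently, via Fourier decay $|\widehat{K_{\mathrm{smooth}}}(n)|\lesssim |n|^{-4}$ of $C^4$ periodic functions).

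The main obstacle is verifying that the subtraction of the two singular expansions indeed yields a globally $C^4$-regular periodic function: away from $\alpha_i$ this is automatic from $\phi\in C^6$, but at $\alpha_i$ one must check that enough Laurent coefficients of $K(\alpha_i+s)$ and $\tilde K(\alpha_i+s)$ match. A direct computation expanding both kernels as Laurent series in $s$ shows that their difference extends to an analytic function of $s$ in a neighborhood of $0$, with Taylor coefficients polynomial in $\phi_\alpha(\alpha_i)^{-1},\phi_{\alpha\alpha}(\alpha_i),\ldots,\phi^{(6)}(\alpha_i)$; invoking Proposition \ref{h-phi} together with the Sobolev embedding $W^{7,2}\hookrightarrow C^6$ converts the hypothesis $h(0)\in W^{m,2}$ with $m\geq 7$ into a bound $\|\phi\|_{C^6}\leq C(\beta,\|h(0)\|_{W^{7,2}})$, yielding $|R_{1,i}|\leq a^4\,C(\beta,\|h(0)\|_{W^{7,2}(I)})$ as claimed.
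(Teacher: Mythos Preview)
Your argument is correct and takes a genuinely different route from the paper's. The paper expands the Hilbert kernel via the Mittag--Leffler identity $\frac{\pi}{L}\cot\frac{\pi x}{L}=\sum_{k\in\mathbb Z}\frac{1}{x+kL}$, splitting the integral into a singular Cauchy piece $T_1$ and a regular sum $T_2$; it then invokes the singular Euler--Maclaurin formula of Sidi--Israeli (their Theorem~4) on $T_1$, from which the correction $\frac{a}{2}\frac{\phi_{\alpha\alpha}}{\phi_\alpha^2}$ emerges as the $aG'(\alpha_i)$ term for $G(\alpha)=\frac{\alpha-\alpha_i}{\phi(\alpha)-\phi(\alpha_i)}$, and separately verifies that the Euler--Maclaurin boundary contributions $K_1+K_2$ telescope to zero after summing over $k$. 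Your singularity-subtraction device replaces all of this with two exact identities (the vanishing of $\PV\!\int_0^1\tilde K$ and of $\sum_{j\neq i}\tilde K(\alpha_j)$), after which only the ordinary periodic trapezoidal rule for a smooth $1$-periodic integrand remains; the correction term appears as the diagonal value $aK_{\mathrm{smooth}}(\alpha_i)$ rather than from a specialized quadrature theorem.

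What your approach buys is self-containment: no external singular-quadrature result, no boundary-term bookkeeping, no summation over the lattice shifts $k$. What the paper's approach buys is that the correction term is read off from an existing theorem rather than requiring the explicit Laurent computation at $\alpha_i$. Two small remarks: the rule you are applying is the trapezoidal (left-endpoint) rule on the equispaced nodes $\alpha_j$, not the midpoint rule, though for periodic integrands the distinction is immaterial; and your claim that $K_{\mathrm{smooth}}$ is \emph{analytic} near $\alpha_i$ is literally true only if $\phi$ is analytic---what you actually need and get from $\phi\in C^6$ (via $h(0)\in W^{7,2}\hookrightarrow C^6$ and Proposition~\ref{h-phi}) is $K_{\mathrm{smooth}}\in C^4$, which suffices for the $O(a^4)$ remainder. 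The counting is that $u(s)=\phi(\alpha_i)-\phi(\alpha_i+s)\in C^6$ gives $u(s)/s\in C^5$, and one further division by $s$ after matching the leading term costs one more derivative.
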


\begin{proof} We use the Euler-Maclaurin expansion in \cite{Sidi1988} to estimate $R_{1,i}$. Without loss of generality, we assume $i=1,\cdots, N-1,$ that is $\alpha_i\neq 0,\,1$. For $i=N,$ we can change interval $[0,1]$ to $[-a,1-a]$ due to periodicity. Using \eqref{09temp1}, we can see
 \begin{equation*}
\begin{aligned}
&\PV\int_{0}^{1}\frac{\pi}{L}\cot(\frac{\pi}{L}(\phi(\alpha)-\phi(\alpha_i)))\ud\alpha\\
=&\sum_{k\in \mathbb{Z}}\PV\int_0^1 \frac{1}{\phi(\alpha)-\phi(\alpha_i)+kL}\ud\alpha\\
=&\PV\int_0^1 \frac{1}{\phi(\alpha)-\phi(\alpha_i)}\ud\alpha+\sum_{k\in\mathbb{Z} \atop k\neq 0}\int_0^1 \frac{1}{\phi(\alpha)-\phi(\alpha_i)+kL}\ud\alpha\\
=&T_1+T_2.
\end{aligned}
\end{equation*}

Denote
$$^{\#}\sum_{j=0}^{N}\beta_j=\sum_{j=1}^{N-1}\beta_j+\frac{1}{2}\sum_{j=0,N}\beta_j.$$

First we recall Theorem 1 and Theorem 4 in \cite{Sidi1988} as follows:
\begin{thm}[Theorem~1 of \cite{Sidi1988}]\label{T1}
Let function $g(x)$ be $2m$ times differentiable on $[0,1]$. Then
  $$\int_0^1g(x)\ud x=a^{\#}\sum_{j=0}^{N} g(x_j)+\sum_{\mu=1}^{m-1}\frac{B_{2\mu}}{2\mu!}[g^{(2\mu-1)}|_{x=1}^{x=0}]a^{2\mu}+R_{2m}[g;(0,1)],$$
  where $$R_{2m}[g;(0,1)]=a^{2m}\int_0^1 \frac{\bar{B}_{2m}[\frac{x}{a}]-B_{2m}}{(2m)!}g^{(2m)}(x)\ud x,$$
  $B_{\mu}$ is the Bernoulli number and $\bar{B}_{\mu}$ is the periodic Bernoullian function of order $\mu$.
\end{thm}

\begin{thm}[Theorem~4 of \cite{Sidi1988}]\label{T2}
  Let function $G(x)$ be $2m$ times differentiable on $[0,1]$ and let $g(x)=\frac{G(x)}{x-t}$. Then
  $$\int_0^1g(x)\ud x=a^{\#}\sum_{j=0,x_j\neq t}^{N} g(x_j)+aG'(t)+\sum_{\mu=1}^{m-1}\frac{B_{2\mu}}{2\mu!}[g^{(2\mu-1)}|_{x=1}^{x=0}]a^{2\mu}+\tilde{R}_{2m}[g;(0,1)],$$
  where $$\tilde{R}_{2m}[g;(0,1)]=a^{2m}\PV\int_0^1 \frac{\bar{B}_{2m}[\frac{x}{a}]-B_{2m}}{(2m)!}g^{(2m)}(x)\ud x.$$
\end{thm}

For the nonsingular $T_2$, we apply Theorem \ref{T1} to obtain
\begin{equation}\label{617_2}
T_2=\sum_{k\in\mathbb{Z} \atop k\neq 0}\biggl[ a(^{\#}\sum_{j=0}^{N}\frac{1}{\phi(\alpha_j)-\phi(\alpha_i)+kL})+ a^2 \frac{B_2}{2}\frac{\ud }{\ud\alpha}\biggl(\frac{1}{\phi(\alpha)-\phi(\alpha_i)+kL}\biggr)\biggr|_{\alpha=1}^{\alpha=0}+a^4 e_1(k) \biggr],
\end{equation}
where
\begin{equation}\label{617_3}
|e_1(k)|=\bigg\lvert\int_0^1 \frac{\bar{B}_4[\frac{\alpha}{a}]-B_4}{4!}\frac{\ud ^4}{\ud  \alpha^4}\biggl(\frac{1}{\phi(\alpha)-\phi(\alpha_i)+kL}\biggr) \ud \alpha\bigg\rvert \leq c \max_{\alpha\in [0,1]}\frac{\ud ^4}{\ud  \alpha^4}\biggl(\frac{1}{\phi(\alpha)-\phi(\alpha_i)+kL}\biggr).
\end{equation}
Due to $\phi_\alpha(1)-\phi_\alpha(0)=0$, the second term in \eqref{617_2} becomes
\begin{equation}\label{K2}
\begin{aligned}
K_{2}:=&\sum_{k\in\mathbb{Z} \atop k\neq 0}\frac{B_2}{2}\frac{\ud }{\ud\alpha}\biggl(\frac{1}{\phi(\alpha)-\phi(\alpha_i)+kL}\biggr)\biggr|_{\alpha=1}^{\alpha=0}\\
=&\sum_{k\in\mathbb{Z} \atop k\neq 0}\frac{B_2}{2}\phi_\alpha(0)\biggl(\frac{1}{(kL-\phi(\alpha_i))^2}-\frac{1}{(L+kL-\phi(\alpha_i))^2}\biggr).
\end{aligned}
\end{equation}

To estimate the last term in \eqref{617_2}, since $\max_{\alpha\in [0,1]}\frac{\ud ^4}{\ud  \alpha^4}\biggl(\frac{1}{\phi(\alpha)-\phi(\alpha_i)+kL}\biggr)$ in \eqref{617_3} is summable respect to $k$, we get
\begin{equation}\label{e1}
|\sum_{k\in\mathbb{Z} \atop k\neq 0} e_1(k)|\leq C( \beta,\|h(0)\|_{W^{7,2}(I)}).
\end{equation}

Now we deal with the singular term $T_1$.
Denote
$G(\alpha):=\frac{\alpha-\alpha_i}{\phi(\alpha)-\phi(\alpha_i)}.$
 Applying Theorem \ref{T2} to
 $$g(\alpha)=\frac{G(\alpha)}{\alpha-\alpha_i}=\frac{\frac{\alpha-\alpha_i}{\phi(\alpha)-\phi(\alpha_i)}}{\alpha-\alpha_i}=\frac{1}{\phi(\alpha)-\phi(\alpha_i)},$$
then we have
\begin{equation}\label{617_4}
\begin{aligned}
T_1= a(^{\#}\sum_{j=0,j\neq i}^{N}\frac{1}{\phi(\alpha_j)-\phi(\alpha_i)})-\frac{a}{2}\frac{\phi_{\alpha\alpha}}{\phi_\alpha^2}\biggr|_{\alpha_i}+ a^2 \frac{B_2}{2}\frac{\ud }{\ud\alpha}\biggl(\frac{1}{\phi(\alpha)-\phi(\alpha_i)}\biggr)\biggr|_{\alpha=1}^{\alpha=0}+a^4 e_2,
\end{aligned}
\end{equation}
where
$$e_2:=\PV\int_0^1 \frac{\bar{B}_4[\frac{\alpha}{a}]-B_4}{4!}\frac{\ud ^4}{\ud  \alpha^4}\biggl(\frac{1}{\phi(\alpha)-\phi(\alpha_i)}\biggr)\ud\alpha.$$
Due to $\phi_\alpha(1)-\phi_\alpha(0)=0$ again, the third term in \eqref{617_4} becomes
\begin{equation}\label{K1}
\begin{aligned}
K_{1}:=&\frac{B_2}{2}\frac{\ud }{\ud\alpha}\biggl(\frac{1}{\phi(\alpha)-\phi(\alpha_i)}\biggr)\biggr|_{\alpha=1}^{\alpha=0}\\
=&\frac{B_2}{2}\phi_\alpha(0)\biggl(\frac{1}{(-\phi(\alpha_i))^2}-\frac{1}{(L-\phi(\alpha_i))^2}\biggr).
\end{aligned}
\end{equation}
Without loss of generality, we can also assume $\alpha_i\leq \frac{1}{2}$. Denote $p(\alpha):=\frac{\bar{B}_4[\frac{\alpha}{a}]-B_4}{4!},$ we have
\begin{equation}\label{617_5}
\begin{aligned}
e_2&=\PV\int_0^1 p(\alpha)\frac{\ud ^4}{\ud  \alpha^4}\biggl(\frac{G(\alpha)-G(\alpha_i)}{\alpha-\alpha_i}+\frac{G(\alpha_i)}{\alpha-\alpha_i}\biggr)\ud\alpha\\
&\leq C( \beta,\|h(0)\|_{W^{7,2}(I)})+\PV\int_0^1 c p(\alpha) \frac{\ud ^4}{\ud  \alpha^4}\biggl(\frac{1}{\alpha-\alpha_i}\biggr)\ud\alpha,
\end{aligned}
\end{equation}
where we used the differentiability of $G(\alpha)$.
For the last term in \eqref{617_5}, since $\alpha_i$ is the singular point, we do variable substitution to obtain
\begin{equation*}
\begin{aligned}
&\PV\int_0^1 c p(\alpha) \frac{\ud ^4}{\ud  \alpha^4}\biggl(\frac{1}{\alpha-\alpha_i}\biggr)\ud\alpha\\
=&\PV\int_{-\alpha_i}^{1-\alpha_i} c p(\alpha+\alpha_i) \frac{\ud ^4}{\ud  \alpha^4}\biggl(\frac{1}{\alpha}\biggr)\ud\alpha\\
=&\PV\int_{-\alpha_i}^{\alpha_i} c p(\alpha+\alpha_i) \frac{1}{ \alpha^5} \ud\alpha+\int_{\alpha_i}^{1-\alpha_i} c p(\alpha+\alpha_i) \frac{1}{ \alpha^5}\ud\alpha\\
= & \int_{\alpha_i}^{1-\alpha_i} c p(\alpha+\alpha_i) \frac{1}{ \alpha^5}\ud\alpha.
\end{aligned}
\end{equation*}
Here we used
$$\bar{B}_4\Big[\frac{\alpha+\alpha_i}{a}\Big]=\bar{B}_4\Big[\frac{\alpha}{a}\Big],$$
due to $\frac{\alpha_i}{a}$ is integer. Since $\bar{B}_4(x)$ is even, $c p(\alpha+\alpha_i) \frac{1}{ \alpha^5}$ is odd, so
the Cauchy principal value integral $\PV\int_{-\alpha_i}^{\alpha_i} c p(\alpha+\alpha_i) \frac{1}{ \alpha^5} \ud\alpha$ is zero.

Hence we get
\begin{equation}\label{e2}
|e_2|\leq C( \beta,\|h(0)\|_{W^{7,2}(I)}).
\end{equation}

On the other hand, \eqref{K2} and \eqref{K1} show that
\begin{equation*}
\begin{aligned}
K_1+K_{2}=&\sum_{k\in\mathbb{Z}}\frac{B_2}{2}\phi_\alpha(0)\biggl(\frac{1}{(kL-\phi(\alpha_i))^2}-\frac{1}{(L+kL-\phi(\alpha_i))^2}\biggr)=0.
\end{aligned}
\end{equation*}

Denote $e:=\sum_{k\in\mathbb{Z} \atop k\neq 0} e_1(k)+e_2$. Combining the calculations for $T_1$ and $T_2$, we obtain
\begin{equation*}
\begin{aligned}
\PV\int_{0}^{1}\frac{\pi}{L}\cot(\frac{\pi}{L}(\phi(\alpha)-\phi(\alpha_i)))\ud\alpha=\sum_{j\neq i,j=1}^{N}a \frac{\pi}{L}\cot(\frac{\pi}{L}(\phi_j-\phi(\alpha_i)))-\frac{a}{2}\frac{\phi_{\alpha\alpha}}{\phi_\alpha^2}\biggr|_{\alpha_i}+e a^4,
\end{aligned}
\end{equation*}
with $\lvert e\rvert\leq C( \beta,\|h(0)\|_{W^{7,2}(I)}).$
This concludes \eqref{lq4.2} and $|R_{1,i}|\leq a^4C( \beta,\|h(0)\|_{W^{7,2}(I)}).$
\end{proof}

Notice that change of variable from $h$ to $\phi$ does not effect the Cauchy principal value integral and that $h_x<0$. Then similar to \eqref{623_1}, by \eqref{09temp1} and variable substitution, we have
\begin{equation*}
\begin{aligned}
&\PV \int_0^1 \frac{\pi}{L}\cot \Big( \frac{\pi}{L} \big(\phi(\alpha_i)-\phi(\alpha)\big) \Big)\ud \alpha
=-\PV \int_0^1 \sum_{k\in \mathbb{Z}}\frac{1}{\phi(\alpha)-\phi(\alpha_i)-kL}\ud \alpha\\
=&-\PV\int_{-\infty}^{+\infty}\frac{1}{\phi(\alpha)-\phi(\alpha_i)}\ud \alpha
=\PV\int_{-\infty}^{+\infty}\frac{h_x}{x-\phi_i}\ud x\\
=&\PV\sum_{k\in\mathbb{Z}}\int_{-\frac{L}{2}+kL}^{\frac{L}{2}+kL}\frac{h_x}{x-\phi_i}\ud x
=\frac{\pi}{L} \PV \int_{-\frac{L}{2}}^{\frac{L}{2}}h_x \cot(\frac{x-\phi_i}{L}\pi)\ud x\\
=&-\pi H(h_x)|_{\phi_i}.
\end{aligned}
\end{equation*}

This, combined with Lemma \ref{l4.2}, leads to
\begin{lem}\label{ll}
Let $I_{1,i}$ be defined in \eqref{I123} and $v_1$ be function of $\alpha$ defined as
\begin{equation}\label{u1}
v_1(\alpha;\phi):=-\frac{\phi_{\alpha\alpha}}{L\phi_\alpha^2}.
\end{equation}
Then we have
\begin{equation}\label{I1}
I_{1,i}=-\lpi H(h_x)\biggr|_{\phi_i}+v_1(\alpha_i;\phi)a+R_{1,i},
\end{equation}
with $|R_{1,i}|\leq  a^4C( \beta,\|h(0)\|_{W^{7,2}(I)})$.
\end{lem}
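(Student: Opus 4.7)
My plan is to obtain Lemma \ref{ll} as an almost immediate corollary of Lemma \ref{l4.2}, since the latter already contains the main quantitative estimate — namely that the nodal sum of $\frac{\pi}{L}\cot\bigl(\frac{\pi}{L}(\phi(\alpha_i)-\phi_j)\bigr)$ approximates the corresponding principal-value integral with an $O(a)$ correction $\frac{a}{2}\phi_{\alpha\alpha}/\phi_\alpha^2$ and an $O(a^4)$ remainder. What remains is essentially bookkeeping: to convert the discrete Cauchy-kernel sum $\sum_{j\neq i}\frac{a}{\phi_j-\phi_i}$ defining $I_{1,i}$ into the cotangent nodal sum, and to convert the principal-value integral into the periodic Hilbert transform $H(h_x)|_{\phi_i}$.

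First I would rewrite $I_{1,i}$. Exchanging the order of the outer sums in the definition in \eqref{I123} and using the periodization identity \eqref{09temp1}, I obtain
$$I_{1,i} = \frac{2}{L}\sum_{j=1,\,j\neq i}^{N} a\,\frac{\pi}{L}\cot\Bigl(\frac{\pi}{L}(\phi(\alpha_i)-\phi_j)\Bigr),$$
using the oddness of $\cot$ to flip the argument. This is precisely the nodal sum that appears on the right-hand side of \eqref{lq4.2}, so applying Lemma \ref{l4.2} expresses it as the principal-value integral $\PV\int_0^1 \frac{\pi}{L}\cot(\frac{\pi}{L}(\phi(\alpha_i)-\phi(\alpha)))\ud\alpha$ minus $\frac{a}{2}\phi_{\alpha\alpha}/\phi_\alpha^2$ minus an error of size $a^4 C(\beta,\|h(0)\|_{W^{7,2}(I)})$.

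Second, I would invoke the identity derived immediately before the statement of the lemma, namely
$$\PV\int_0^1 \frac{\pi}{L}\cot\Bigl(\frac{\pi}{L}(\phi(\alpha_i)-\phi(\alpha))\Bigr)\,\ud\alpha = -\pi H(h_x)\bigr|_{\phi_i},$$
which itself is obtained by unfolding the periodic cotangent via \eqref{09temp1} and changing variables from $\alpha$ to $x = \phi(\alpha)$. Substituting into the expression for $I_{1,i}$ and collecting the prefactor $\frac{2}{L}$, the leading term becomes $-\frac{2\pi}{L}H(h_x)|_{\phi_i}=-\lpi H(h_x)|_{\phi_i}$ and the $O(a)$ term becomes $-\frac{1}{L}\phi_{\alpha\alpha}/\phi_\alpha^2\cdot a = v_1(\alpha_i;\phi)\,a$, using the definition \eqref{u1}. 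The new remainder is $-\frac{2}{L}$ times the remainder of Lemma \ref{l4.2}, which inherits the bound $a^4\,C(\beta,\|h(0)\|_{W^{7,2}(I)})$.

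There is no genuine obstacle in this lemma: all the delicate analysis — applying the Sidi/Euler--Maclaurin expansion separately to the nonsingular sum (the $k\neq 0$ periodic copies) and to the Cauchy-principal-value piece, the cancellation of the boundary contributions $K_1+K_2=0$ using $\phi_\alpha(1)=\phi_\alpha(0)$, and the vanishing of the principal-value residual at the singularity via the evenness of the Bernoulli function $\bar{B}_4$ — has already been carried out in the proof of Lemma \ref{l4.2}. The only thing requiring care here is to track the signs and the various factors of $L$, $\pi$, and $2$ when passing between the Cauchy kernel $\frac{1}{\phi_j-\phi_i}$, the periodic Hilbert kernel $\frac{\pi}{L}\cot(\frac{\pi}{L}\cdot)$, and the operator $H$.
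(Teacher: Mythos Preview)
Your proposal is correct and follows exactly the paper's approach: the paper proves Lemma~\ref{ll} in a single sentence, stating that the identity $\PV\int_0^1 \frac{\pi}{L}\cot(\frac{\pi}{L}(\phi(\alpha_i)-\phi(\alpha)))\,\ud\alpha = -\pi H(h_x)|_{\phi_i}$, combined with Lemma~\ref{l4.2}, immediately yields the result. Your bookkeeping of the factors $\frac{2}{L}$, $\pi$, and the sign flip via the oddness of $\cot$ is accurate, and you correctly note that the remainder in Lemma~\ref{ll} is $-\frac{2}{L}$ times the $R_{1,i}$ of Lemma~\ref{l4.2} (the paper reuses the same symbol).
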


We now turn to estimate $I_{3,i}$.
\begin{lem}\label{l4.3}
Let $I_{3,i}$ be defined in \eqref{I123} and $v_3$ be function of $\alpha$ defined as
\begin{equation}\label{u3}
v_3(\alpha;\phi):=\frac{-\frac{5}{2}\phi_{\alpha\alpha}^3-\frac{1}{4}\phi_\alpha^2\phi^{(4)}+2\phi_\alpha\phi_{\alpha\alpha}\phi^{(3)}}{\phi_\alpha^6}.
\end{equation}
Then we have
\begin{equation}\label{I3}
I_{3,i}=3{h_{xx}}{h_x}|_{\phi_i}+v_3(\alpha_i;\phi) a^2+R_{3,i},
\end{equation}
where $|R_{3,i}|\leq  a^4C( \beta,\|h(0)\|_{W^{7,2}(I)})$.
\end{lem}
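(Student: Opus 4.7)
The proof will parallel the calculation performed in Lemma \ref{l4.1}, replacing the $\frac{1}{\Delta}$ structure with $\frac{1}{\Delta^3}$ and carrying the Taylor expansions one order further so that the $O(a^4)$ remainder can be captured.

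First, using the convention $\alpha_{i\pm 1}=\alpha_i\mp a$, I would Taylor expand $\phi_{i+1}$ and $\phi_{i-1}$ around $\alpha_i$ through order $a^5$ with an $a^6$ error term (exactly as in Lemma \ref{l4.1}), and factor
\[
\phi_{i+1}-\phi_i=-\phi_{\alpha,i}\,a\,(1+U),\qquad \phi_i-\phi_{i-1}=-\phi_{\alpha,i}\,a\,(1+V),
\]
where
\[
U=-\frac{\phi_{\alpha\alpha}}{2\phi_\alpha}a+\frac{\phi^{(3)}}{6\phi_\alpha}a^2-\frac{\phi^{(4)}}{24\phi_\alpha}a^3+\frac{\phi^{(5)}}{120\phi_\alpha}a^4+O(a^5)
\]
and $V(a)=U(-a)$ (all derivatives evaluated at $\alpha_i$). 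Cubing gives $(\phi_{i+1}-\phi_i)^3=-\phi_\alpha^3 a^3(1+U)^3$ and analogously for $\Delta_-$, so
\[
I_{3,i}=-\frac{1}{\phi_\alpha^3 a}\Bigl[(1+U)^{-3}-(1+V)^{-3}\Bigr].
\]

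Next, I would expand via the binomial series $(1+x)^{-3}=1-3x+6x^2-10x^3+15x^4-\cdots$ and use the parity relation $V(a)=U(-a)$ to observe that $(1+U)^{-3}-(1+V)^{-3}$ is an odd function of $a$; in particular, only odd powers of $a$ appear in the expansion, so after dividing by $a$, the $O(a)$ and $O(a^3)$ corrections in $I_{3,i}$ are absent and the remainder $R_{3,i}$ is $O(a^4)$. The $O(a)$ coefficient inside the bracket is $-3(U-V)\big|_{a^1}=\tfrac{3\phi_{\alpha\alpha}}{\phi_\alpha}$, which contributes $-\tfrac{3\phi_{\alpha\alpha}}{\phi_\alpha^4}$ to $I_{3,i}$; using $h_x=1/\phi_\alpha$ and $h_{xx}=-\phi_{\alpha\alpha}/\phi_\alpha^3$ this equals $3h_{xx}h_x|_{\phi_i}$ as claimed.

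The main work, and the step I expect to be the lone real obstacle (though it is just careful bookkeeping), is the identification of the $a^3$ coefficient inside the bracket. Three contributions arise: $-3(U-V)$ supplies $-\tfrac{\phi^{(4)}}{12\phi_\alpha}a^3$ with sign flipped; $6(U^2-V^2)=6(U-V)(U+V)$ supplies $-\tfrac{2\phi_{\alpha\alpha}\phi^{(3)}}{\phi_\alpha^2}a^3$ to leading order; and $-10(U^3-V^3)=-10(U-V)(U^2+UV+V^2)$ contributes $\tfrac{5\phi_{\alpha\alpha}^3}{2\phi_\alpha^3}a^3$ (higher powers like $U^4-V^4$ are already $O(a^5)$). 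Summing these three and multiplying by $-1/\phi_\alpha^3$ yields exactly
\[
v_3(\alpha_i;\phi)=\frac{-\tfrac{5}{2}\phi_{\alpha\alpha}^3-\tfrac14\phi_\alpha^2\phi^{(4)}+2\phi_\alpha\phi_{\alpha\alpha}\phi^{(3)}}{\phi_\alpha^6}.
\]

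Finally, the remainder $R_{3,i}$ is a rational function of $\phi_\alpha,\ldots,\phi^{(6)}$ evaluated at $\alpha_i$ and at intermediate Taylor points, multiplied by $a^4$. Using the lower bound $|\phi_\alpha|\ge -1/(2\beta)$ from \eqref{eta} together with the $W^{6,\infty}$ regularity of $\phi$ from \eqref{04_temp1}--\eqref{04_temp2}, which is controlled by $C(\beta,\|h(0)\|_{W^{7,2}(I)})$ via Proposition \ref{h-phi}, gives the stated bound $|R_{3,i}|\le a^4 C(\beta,\|h(0)\|_{W^{7,2}(I)})$, completing the proof.
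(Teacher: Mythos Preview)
Your proof is correct and follows essentially the same Taylor-expansion strategy as the paper. The paper organizes the computation by first applying the algebraic identity $\frac{1}{A^3}-\frac{1}{B^3}=\frac{(B-A)(A^2+AB+B^2)}{A^3B^3}$ with $A=\phi_{i+1}-\phi_i$, $B=\phi_i-\phi_{i-1}$ and then Taylor-expanding each factor separately, whereas you factor out $-\phi_\alpha^3 a^3$ from each cube and use the binomial series $(1+x)^{-3}$ together with the parity relation $V(a)=U(-a)$. Your parity observation gives a cleaner structural explanation for why only even powers of $a$ survive in $I_{3,i}$ (so that no $a^1$ or $a^3$ correction appears), while the paper simply records the intermediate quantities $B_{1,i},B_{2,i},C_{1,i},C_{2,i},C_{3,i}$ and reads off the same conclusion; the two routes are equivalent in substance.

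One small slip: the inequality $h_x\le\beta<0$ in \eqref{eta} yields an \emph{upper} bound $|\phi_\alpha|=1/|h_x|\le -1/\beta$, not a lower bound. The lower bound on $|\phi_\alpha|$ that you actually need to control the inverse powers in $R_{3,i}$ comes instead from the constant $m_1$ in \eqref{04_temp2} (equivalently, from the upper bound on $|h_x|$ supplied by the regularity), which you do cite.
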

\begin{proof} Using \eqref{fan} and Taylor expansion, it is similar to the proof of Lemma \ref{l4.1} that

\begin{equation*}
\begin{aligned}
I_{3,i}&=a^2 \biggl( \frac{1}{(\phi_{i+1}-\phi_i)^3}-\frac{1}{(\phi_i-\phi_{i-1})^3} \biggr)\\
&=\frac{\frac{2\phi_i-\phi_{i+1}-\phi_{i-1}}{a^2}}{(\frac{\phi_{i+1}-\phi
_i}{a})^3(\frac{\phi
_i-\phi
_{i-1}}{a})^3}\cdot \biggl((\frac{\phi_i-\phi_{i-1}}{a})^2+(\frac{\phi_{i+1}-\phi_{i}}{a})^2+(\frac{\phi_i-\phi_{i-1}}{a})(\frac{\phi_{i+1}-\phi_{i}}{a})\biggr)\\
&=\frac{\biggl(-\phi_{\alpha\alpha,i}-\frac{1}{12}\phi^{(4)}_i a^2-\frac{1}{6!}(\phi^{(6)}(\xi^+)+\phi^{(6)}(\xi^-)) a^4\biggr)\biggl(3\phi_{\alpha,i}^{2}+B_{1,i} a^2+B_{2,i} a^4\biggr)}{\phi_{\alpha,i}^{6}+C_{1,i} a^2+C_{2,i} a^4}\\
&=\Big(-\frac{3\phi_{\alpha\alpha}}{\phi_\alpha^4}\Big)_i+\biggl[\frac{-\frac{5}{2}\phi_{\alpha\alpha}^3-\frac{1}{4}\phi_\alpha^2\phi^{(4)}+2\phi_\alpha\phi_{\alpha\alpha}\phi^{(3)}}{\phi_\alpha^6}\biggr]_i a^2+C_{3,i} a^4\\
&=(3h_{xx}h_x)_i+\biggl[\frac{-\frac{5}{2}\phi_{\alpha\alpha}^3-\frac{1}{4}\phi_\alpha^2\phi^{(4)}+2\phi_\alpha\phi_{\alpha\alpha}\phi^{(3)}}{\phi_\alpha^6}\biggr]_i a^2+C_{3,i} a^4,
\end{aligned}
\end{equation*}
where
$$B_{1,i}=(\phi_\alpha \phi^{(3)}+\frac{1}{4}\phi_{\alpha\alpha}^2)_i,\quad |B_{2,i}|\leq c,$$
$$C_{1,i}=(-\frac{3}{4}\phi_\alpha^4 \phi_{\alpha\alpha}^2+\phi_{\alpha}^5 \phi^{(3)})_i,\quad |C_{2,i}|\leq c,\quad |C_{3,i}|\leq c.$$
Denote
\begin{equation*}
v_3(\alpha;\phi):=\frac{-\frac{5}{2}\phi_{\alpha\alpha}^3-\frac{1}{4}\phi_\alpha^2\phi^{(4)}+2\phi_\alpha\phi_{\alpha\alpha}\phi^{(3)}}{\phi_\alpha^6}.
\end{equation*}
We conclude the proof of lemma \ref{l4.3}.
\end{proof}

Denote
\begin{equation}\label{A624}
A(x;h):=\biggl(-\lpi H(h_x)+3{h_{xx}}{h_x}+\frac{h_{xx}}{h_x}\biggr)(x),
\end{equation}
and
$$ R_{4,i}:=R_{1,i}+R_{2,i}+R_{3,i}.$$
The above three
lemmas yield
\begin{lem}\label{l4.4}
For $\bar{f}_i$ defined in \eqref{fbar}, $v_1$ defined in \eqref{u1}, $v_2$ defined in \eqref{u2}, and $v_3$ defined in \eqref{u3}, we have
\begin{equation}\label{f624}
\bar{f}_i=A(\phi_i;h)+v_1(\alpha_i;\phi)a+(v_2+v_3)(\alpha_i;\phi) a^2+R_{4,i},
\end{equation}
where $|R_{4,i}|\leq a^4C( \beta,\|h(0)\|_{W^{7,2}(I)})$.
\end{lem}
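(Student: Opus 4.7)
The plan is essentially to add up the three already-established expansions. Recalling the decomposition
$$
\bar{f}_i = I_{1,i} + I_{2,i} + I_{3,i}
$$
from \eqref{I123} and \eqref{fbar}, I would apply Lemma \ref{ll} to $I_{1,i}$, Lemma \ref{l4.1} to $I_{2,i}$, and Lemma \ref{l4.3} to $I_{3,i}$, then group the resulting terms by powers of $a$.

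Specifically, the $O(1)$ pieces combine to give
$$
-\lpi H(h_x)\bigr|_{\phi_i} + \frac{h_{xx}}{h_x}\bigr|_{\phi_i} + 3 h_{xx} h_x \bigr|_{\phi_i} = A(\phi_i;h),
$$
matching the definition \eqref{A624}. The unique $O(a)$ correction comes from $I_{1,i}$ and equals $v_1(\alpha_i;\phi)\,a$ by \eqref{u1}, since Lemmas \ref{l4.1} and \ref{l4.3} produce no $O(a)$ contribution (Taylor expansion of $I_{2,i}$ and $I_{3,i}$ around $\alpha_i$ is symmetric, so the odd-order terms cancel). The $O(a^2)$ correction is the sum of the explicit $v_2$ and $v_3$ terms from Lemma \ref{l4.1} and Lemma \ref{l4.3}, which is exactly $(v_2+v_3)(\alpha_i;\phi)\,a^2$.

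Setting $R_{4,i}:=R_{1,i}+R_{2,i}+R_{3,i}$, the triangle inequality together with the three remainder bounds (each of the form $a^4 C(\beta,\|h(0)\|_{W^{7,2}(I)})$) yields $|R_{4,i}|\le a^4 C(\beta,\|h(0)\|_{W^{7,2}(I)})$ after absorbing constants, and the lemma follows.

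There is no real obstacle here: the proof is a bookkeeping step that packages the three preceding technical lemmas into a single expansion formula for $\bar f_i$. The only thing worth double-checking is the cancellation of possible $O(a)$ contributions from $I_{2,i}$ and $I_{3,i}$, which is already accounted for by the form of the Taylor expansions used in the proofs of Lemma \ref{l4.1} and Lemma \ref{l4.3} (only even powers of $a$ survive because $\phi_{i\pm 1}$ enter symmetrically around $\phi_i$).
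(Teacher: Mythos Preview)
Your proposal is correct and matches the paper's approach exactly: the paper simply defines $R_{4,i}:=R_{1,i}+R_{2,i}+R_{3,i}$ and states that the three preceding lemmas yield \eqref{f624}, with no further argument given. Your observation about the absence of $O(a)$ terms in $I_{2,i}$ and $I_{3,i}$ (due to the even-in-$a$ structure of the symmetric Taylor expansions) is precisely why Lemmas~\ref{l4.1} and~\ref{l4.3} are stated without an $O(a)$ correction, so the bookkeeping goes through as you describe.
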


Now we are ready to proof the main result of this section, Theorem \ref{th4.4}.

\begin{proof}[Proof of Theorem \ref{th4.4}.]
Step 1. To calculate $F_i$ in \eqref{Fi}, by \eqref{f624} in Lemma \ref{l4.4}, we first need to calculate
\begin{equation}\label{618_1}
\begin{aligned}
&\frac{A_{i+1}-A_i}{\phi_{i+1}-\phi_i}-\frac{A_i-A_{i-1}}{\phi_{i}-\phi_{i-1}}\\
=& A_{xx,i} \frac{\phi_{i+1}-\phi_{i-1}}{2}+A_{xxx,i}\frac{(\phi_{i+1}-\phi_{i-1})(\phi_{i+1}+\phi_{i-1}-2\phi_i)}{3!}+r_{1,i} a^4\\
=&-\phi_{\alpha,i} A_{xx,i} a+r_2(\alpha_i;\phi) a^3+r_{3,i} a^4,
\end{aligned}
\end{equation}
where $|r_{1,i}|,|r_{3,i}|\leq C( \beta,\|h(0)\|_{W^{7,2}(I)})$ and
$$r_2(\alpha;\phi):=\biggl(-\frac{1}{3}\phi^{(3)}(A_{xx}\circ\phi)-2\phi_\alpha \phi_{\alpha\alpha}\biggr)(\alpha).$$

Second,
 for any smooth function $v(\alpha)$ respect to $\alpha$, notice that
$$v_{i+1}-v_i=v_{\alpha,i}(\alpha_{i+1}-\alpha_i)+\frac{1}{2}v_{\alpha\alpha,i}(\alpha_{i+1}-\alpha_i)^2+\frac{1}{3!}v^{(3)}_i(\xi^+)(\alpha_{i+1}-\alpha_i)^3,$$
$$v_{i-1}-v_i=v_{\alpha,i}(\alpha_{i-1}-\alpha_i)+\frac{1}{2}v_{\alpha\alpha,i}(\alpha_{i-1}-\alpha_i)^2+\frac{1}{3!}v^{(3)}_i(\xi^-)(\alpha_{i-1}-\alpha_i)^3.$$
Then for other terms in \eqref{f624}, we have
\begin{equation}\label{618_2}
\begin{aligned}
&\frac{v_{i+1}-v_i}{\phi_{i+1}-\phi_i}-\frac{v_i-v_{i-1}}{\phi_{i}-\phi_{i-1}}\\
=&\frac{v_{i+1}-v_i}{\alpha_{i+1}-\alpha_i} \frac{h_{i+1}-h_i}{\phi_{i+1}-\phi_i}-\frac{v_i-v_{i-1}}{\alpha_{i}-\alpha_{i-1}} \frac{h_{i}-h_{i-1}}{\phi_{i}-\phi_{i-1}}\\
=&\biggl[v_{\alpha,i}-\frac{1}{2}v_{\alpha\alpha,i}a+\frac{1}{3!}v^{(3)}(\xi^+)a^{2}\biggr]
\biggl[h_{x,i}+h_{xx,i}\frac{\phi_{i+1}-\phi_i}{2}+\frac{1}{3!}h_{xxx}(\eta^+)(\phi_{i+1}-\phi_i)^2\biggr]\\
&-\biggl[v_{\alpha,i}+\frac{1}{2}v_{\alpha\alpha,i}a+\frac{1}{3!}v^{(3)}(\xi^-)a^{2}\biggr]
\biggl[h_{x,i}-h_{xx,i}\frac{\phi_{i}-\phi_{i-1}}{2}+\frac{1}{3!}h_{xxx}(\eta^-)(\phi_{i}-\phi_{i-1})^2\biggr]\\
=& r_4(\alpha_i;\phi) a+r_{5,i} a^2,
\end{aligned}
\end{equation}
where $|r_{5,i}|\leq C( \beta,\|h(0)\|_{W^{7,2}(I)})$, $\eta^+\in[\phi_i,\phi_{i+1}],\,\eta^-\in[\phi_{i-1},\phi_i]$ and
$$r_4(\alpha;\phi):=\biggl(\frac{v_\alpha \phi_{\alpha\alpha}}{\phi_\alpha^2}-\frac{v_{\alpha\alpha}}{\phi_\alpha}\biggr)(\alpha).$$

Denote
\begin{equation}\label{ralpha}
r_0(\alpha;\phi):=\biggl(\frac{v_{1\alpha} \phi_{\alpha\alpha}}{\phi_\alpha^2}-\frac{v_{1\alpha\alpha}}{\phi_\alpha}\biggr)(\alpha),
\end{equation}
and
\begin{equation}
r(\alpha;\phi):=\biggl(\frac{v_{2\alpha} \phi_{\alpha\alpha}}{\phi_\alpha^2}-\frac{v_{2\alpha\alpha}}{\phi_\alpha}+\frac{v_{3\alpha} \phi_{\alpha\alpha}}{\phi_\alpha^2}-\frac{v_{3\alpha\alpha}}{\phi_\alpha}\biggr)(\alpha)+r_2(\alpha;\phi).
\end{equation}
Thus for $F_i$ in \eqref{Fi}, combining \eqref{618_1} and \eqref{618_2}, we get
\begin{equation}\label{618_3}
\begin{aligned}
F_i=&-\frac{A_{xx}}{h_x}(\phi_i)+r_0(\alpha_i;\phi)a+r(\alpha_i;\phi) a^2+ \frac{R_{4,i+1}-2R_{4,i}+R_{4,i-1}}{a^2}(h_x(\phi_i)+r_{6,i} a)\\
=&-\frac{A_{xx}}{h_x}(\phi_i)+r_0(\alpha_i;\phi)a+r(\alpha_i;\phi) a^2+ R_{5,i}a^2,
\end{aligned}
\end{equation}
where $|r_{6,i}|\leq C( \beta,\|h(0)\|_{W^{7,2}(I)}),$ $A(x;h)$ defined in \eqref{A624}. To obtain $|R_{5,i}|\leq  C( \beta,\|h(0)\|_{W^{7,2}(I)}),$ here we also used $|R_{4,i}|\leq  a^4 C( \beta,\|h(0)\|_{W^{7,2}(I)})$
due to Lemma \ref{l4.4}.

Denote
\begin{align}\label{0502t1}
R_i:&=r(\alpha_i;\phi) + R_{5,i}.
\end{align}
For $a$ small enough, we have $|R_i|\leq |r(\alpha_i;\phi)| +| R_{5,i}|\leq C( \beta,\|h(0)\|_{W^{7,2}(I)}),$ .
Finally, comparing \eqref{618_3} with \eqref{4.2}, we conclude \eqref{consistency1}.

Step 2. Now using \eqref{consistency1} and Lemma \ref{l4.4}, we can claim
\begin{equation}\label{617_6}
\sum_{i=1}^N \bar{f}_i\biggl(F_i-\frac{\ud  \phi_i}{\ud t}\biggr)\leq C( \beta,\|h(0)\|_{W^{7,2}(I)}),
\end{equation}
where $C( \beta,\|h(0)\|_{W^{7,2}(I)})$ depends on $\beta,\|h(0)\|_{W^{7,2}(I)}$.

From \eqref{617_6}, multiplying $\bar{f}_i$ in \eqref{consistency1} and summation by parts show that
$$\frac{\ud E^N(\phi)}{\ud t}+\sum_{i=1}^N\frac{\biggl(\bar{f}_{i+1}(\phi)-\bar{f}_i(\phi)\biggr)^2}{\phi_{i+1}-\phi_i}\leq C( \beta,\|h(0)\|_{W^{7,2}(I)})a,$$
Then by \eqref{y}, we have
$$\frac{\ud E^N(\phi)}{\ud t}+a \sum_{i=1}^N \biggl(\frac{\bar{f}_{i+1}(\phi)-\bar{f}_i(\phi)}{a}\biggr)^2\leq C( \beta,\|h(0)\|_{W^{7,2}(I)})a,$$
which completes the proof of Theorem \ref{th4.4}.
\end{proof}

\section{Convergence and the proof of Theorem \ref{thm_main}}\label{sec7}

In this section, our goal is to prove Theorem \ref{thm_main}. The main idea is to first construct an auxiliary solution with high-order consistency (see Section \ref{sec6.3}), and then prove the convergence rate for the auxiliary solution, which helps us obtain the convergence rate for the original PDE solution.

\subsection{Stability of linearized $x$-ODE}\label{sec6.2}

First of all, we devote to study the stability of linearized ODE, which is important when we estimate the convergence rate for the auxiliary solution.
The procedure here is analogous to the stability result of linearized $\phi$-PDE; see Section \ref{sec6.1}.

For vector $x,\,y$ satisfying \eqref{ODE}, set $x=y+\varepsilon z$. We also assume $y_i(t)=\phi(\alpha_i,t),$ and $\phi$ is the solution of \eqref{phi3} satisfying \eqref{04_temp1} and \eqref{04_temp2}.
Denote
\begin{equation}\label{619_1}
M_i=\frac{1}{y_{i+1}-y_i}+\frac{a^2}{(y_{i+1}-y_i)^3}-\frac{1}{y_i-y_{i-1}}-\frac{a^2}{(y_i-y_{i-1})^3}-\frac{2}{L}\sum_{j\neq i}\frac{a}{y_j-y_{i}},
\end{equation}
and
\begin{equation}\label{619_2}
T_i=-\frac{z_{i+1}-z_{i}}{(y_{i+1}-y_{i})^2}-3a^2\frac{z_{i+1}-z_{i}}{(y_{i+1}-y_{i})^4}+\frac{z_{i}-z_{i-1}}{(y_{i}-y_{i-1})^2}+3a^2\frac{z_{i}-z_{i-1}}{(y_{i}-y_{i-1})^4}+\frac{2}{L}\sum_{j\neq i}\frac{a(z_j-z_i)}{(y_j-y_i)^2}.
\end{equation}
Then $z$ satisfies the following linearized equation
\begin{align}\label{linear_2}
\frac{\ud }{\ud t}z_i&=\frac{1}{a}\biggl(\frac{T_{i+1}-T_i}{y_{i+1}-y_i}-\frac{T_i-T_{i-1}}{y_i-y_{i-1}}\biggr)-\frac{1}{a}\biggl[\frac{z_{i+1}-z_i}{(y_{i+1}-y_i)^2}(M_{i+1}-M_i)-\frac{z_i-z_{i-1}}{(y_i-y_{i-1})^2}(M_i-M_{i-1})\biggr].
\end{align}

\begin{prop}\label{prop19_1}
Assume $z(0)\in \ell^2$ and $m_1,m_2>0$ defined in \eqref{04_temp2}. Let $T_m>0$ be the maximal existence time for strong solution $\phi$ in \eqref{04_temp1}. The linearized equation \eqref{linear_2} is stable in the sense
\begin{equation}\label{linear_z}
\|z(t)\|_{\ell^2}\leq C(m_1,m_2,T_m)\|z(0)\|_{\ell^2},\text{ for }t\in[0,T_m],
\end{equation}
where $C(m_1,m_2,T_m)$ is a constant depending only on $m_1,\,m_2,$ and $T_m.$
\end{prop}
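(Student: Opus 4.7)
The proof will be the discrete analogue of Proposition~\ref{prop17_1}, with the $L^2$ pairing replaced by the discrete inner product $\langle u,v\rangle_a:=a\sum_{i=1}^N u_i v_i$, which approximates the $L^2([0,1])$ pairing under $u_i = u(\alpha_i)$. The first step is to rewrite \eqref{linear_2} in a divergence-friendly form. Introducing the ``flux'' $P_i:=\bigl(\tfrac{1}{(y_{i+1}-y_i)^2}+\tfrac{3a^2}{(y_{i+1}-y_i)^4}\bigr)(z_{i+1}-z_i)$ and the nonlocal piece $N_i:=\tfrac{2}{L}\sum_{j\neq i}\tfrac{a(z_j-z_i)}{(y_j-y_i)^2}$, the linearization \eqref{619_2} reads $T_i = P_{i-1}-P_i+N_i$, so the right-hand side of \eqref{linear_2} becomes a discrete double divergence of $P$ (plus analogous terms from $N$ and the commutator $M_{i+1}-M_i$), which sets the stage for summation by parts.

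I would then test against $a z_i$ and sum by parts twice, using periodicity $z_{i+N}=z_i$ to kill boundary contributions. The local part of $T_i$ produces a dominant coercive piece, and by \eqref{y} combined with \eqref{04_temp2} it is bounded above by $-c(m_2)\sum_i a\bigl(\tfrac{z_{i+1}-2z_i+z_{i-1}}{a^2}\bigr)^2$, a discrete $\dot H^2$ seminorm that plays exactly the role of $\int_0^1 \psi_{\alpha\alpha}^2\,\ud\alpha$ in \eqref{psi_S}. The remaining ``cross'' contributions, arising from discrete derivatives of the metric factors $1/(y_{i+1}-y_i)^{2k}$ and from the commutator $\tfrac{z_{i+1}-z_i}{(y_{i+1}-y_i)^2}(M_{i+1}-M_i)$, contain at most one first-order difference of $z$. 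Since $M_i=\bar f_i|_{y}=A(y_i;h)+O(a)$ by Lemma~\ref{l4.4} with $A$ Lipschitz uniformly on $[0,T_m]$, one has $|M_{i+1}-M_i|\leq C(m_1,m_2,T_m)\,a$ and analogous bounds on the metric differences, so a discrete Young inequality combined with the discrete interpolation $\sum_i a\bigl(\tfrac{z_{i+1}-z_i}{a}\bigr)^2 \leq \delta\sum_i a\bigl(\tfrac{z_{i+1}-2z_i+z_{i-1}}{a^2}\bigr)^2+C_\delta\sum_i a z_i^2$ (the analogue of \eqref{615_5}) absorbs them into the dissipation modulo an $O(\|z\|_{\ell^2}^2)$ error.

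The main obstacle, exactly as in the continuous proof, is the nonlocal term $N_i$. My plan is to mimic the identity \eqref{623_1} at the discrete level: summing over periodic images $k\in\mathbb{Z}$ as in \eqref{09temp1}, the kernel $\tfrac{1}{(y_j-y_i)^2}$ is, up to the smooth change of variables $\alpha\mapsto \phi(\alpha,t)$, the derivative of the periodic Hilbert kernel $\cot\bigl(\tfrac{\pi}{L}(\phi(\alpha_j)-\phi(\alpha_i))\bigr)$, whose associated $N$-point discrete Hilbert transform is $\ell^2(\mathbb{Z}/N\mathbb{Z})$-bounded uniformly in $N$ (either by Calder\'on--Zygmund theory on $\mathbb{Z}/N\mathbb{Z}$, or directly from $|y_j-y_i|\geq c_1 a\,|j-i|$ for $|j-i|\leq N/2$ together with telescoping $z_j-z_i=\sum_{k=i}^{j-1}(z_{k+1}-z_k)$). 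This yields $|\langle N,z\rangle_a|\leq \varepsilon\cdot(\text{dissipation})+C(m_1,m_2,T_m)\varepsilon^{-1}\|z\|_{\ell^2}^2$ for any $\varepsilon>0$.

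Collecting the estimates produces
$$\frac{\ud}{\ud t}\sum_i a z_i^2 + c(m_2)\sum_i a\Bigl(\frac{z_{i+1}-2z_i+z_{i-1}}{a^2}\Bigr)^2\leq C(m_1,m_2,T_m)\sum_i a z_i^2,$$
and discrete Gr\"onwall on $[0,T_m]$ closes the estimate \eqref{linear_z}. I expect the uniform-in-$N$ boundedness of the discrete periodic Hilbert transform to be the main technical point; once it is in hand, the remainder of the argument is essentially a termwise transcription of the PDE proof of Proposition~\ref{prop17_1}.
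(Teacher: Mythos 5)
Your proposal is correct and follows essentially the same route as the paper's proof: test against $az_i$, sum by parts, extract coercivity in the discrete $\dot H^2$ seminorm from the leading local term, absorb the first-order difference terms via the discrete Young/interpolation inequality, and control the nonlocal sum by relating it to the ($\ell^2$- or $L^p$-bounded) periodic Hilbert transform before applying Gr\"onwall. The only cosmetic difference is that the paper establishes the boundedness of the nonlocal term by passing to a piecewise interpolant of $z$ and reusing the continuous estimate from Proposition \ref{prop17_1} (via Lemmas \ref{l4.2} and \ref{ll}), whereas you also sketch a direct discrete Calder\'on--Zygmund/telescoping bound as an alternative.
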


\begin{proof}
Step 1. Similar to the proof of Proposition \ref{prop17_1}, first we study the linearized system for \eqref{ODE} without the Hilbert transform term $-\frac{2}{L}\sum_{j\neq i}\frac{a}{x_j-x_i}$. Thus $M_i,\,T_i$ in \eqref{619_1} and \eqref{619_2} become
\begin{equation*}
M_i=\frac{1}{y_{i+1}-y_i}+\frac{a^2}{(y_{i+1}-y_i)^3}-\frac{1}{y_i-y_{i-1}}-\frac{a^2}{(y_i-y_{i-1})^3},
\end{equation*}
and
\begin{equation*}
T_i=-\frac{z_{i+1}-z_{i}}{(y_{i+1}-y_{i})^2}-3a^2\frac{z_{i+1}-z_{i}}{(y_{i+1}-y_{i})^4}+\frac{z_{i}-z_{i-1}}{(y_{i}-y_{i-1})^2}+3a^2\frac{z_{i}-z_{i-1}}{(y_{i}-y_{i-1})^4}.
\end{equation*}

Since $z_{i+N}=z_i$, multiplying both sides of \eqref{linear_2} by $az_i$ and taking summation by parts, we have
\begin{align*}
\sum_{i=1}^{N}az_i\dot{z}_i&=-\sum_{i=1}^N \frac{z_{i+1}-z_i}{y_{i+1}-y_i}(T_{i+1}-T_i)+\sum_{i=1}^N(z_{i+1}-z_i)\frac{z_{i+1}-z_i}{(y_{i+1}-y_i)^2}(M_{i+1}-M_i)\\
&=-a\sum_{i=1}^N \frac{z_{i+1}-z_i}{a}\frac{\frac{T_{i+1}}{\frac{y_{i+1}-y_i}{a}}-\frac{T_{i}}{\frac{y_{i}-y_{i-1}}{a}}}{a}-a\sum_{i=1}^N \frac{z_{i+1}-z_i}{a}\frac{\frac{T_{i}}{\frac{y_{i}-y_{i-1}}{a}}-\frac{T_{i}}{\frac{y_{i+1}-y_{i}}{a}}}{a}\\
&+a\sum_{i=1}^N \biggl(\frac{z_{i+1}-z_i}{a}\biggr)^2 \frac{1}{(\frac{y_{i+1}-y_i}{a})^2}\frac{M_{i+1}-M_i}{a}\\
&=I_1+I_2+I_3.
\end{align*}

Next, we will estimate $I_1,\,I_2,\,I_3$ one by one. First, we deal with
\begin{align*}
I_1&=-a\sum_{i=1}^N \frac{z_{i+1}-z_i}{a}\frac{\frac{T_{i+1}}{\frac{y_{i+1}-y_i}{a}}-\frac{T_{i}}{\frac{y_{i}-y_{i-1}}{a}}}{a}\\
&=a\sum_{i=1}^N\frac{T_i}{\frac{y_i-y_{i-1}}{a}}\frac{\frac{z_{i+1}-z_i}{a}-\frac{z_{i}-z_{i-1}}{a}}{a}.
\end{align*}
We can see
\begin{align*}
T_i=&a^2\frac{z_{i+1}-2z_i+z_{i-1}}{a^2}\biggl(-\frac{1}{(y_{i+1}-y_i)^2}-\frac{3a^2}{(y_{i+1}-y_i)^4}\biggr)\\
&+a\biggl[-\frac{1}{(y_{i+1}-y_i)^2}-\frac{3a^2}{(y_{i+1}-y_i)^4}+\frac{1}{(y_i-y_{i-1})^2}+\frac{3a^2}{(y_i-y_{i-1})^4}\biggr]\frac{z_i-z_{i-1}}{a}.
\end{align*}
Due to Young's inequality, for any $\varepsilon>0$, we have
\begin{align}\label{young}
a\sum_{i=1}^N\biggl(\frac{z_{i+1}-z_{i}}{a}\biggr)^2&=-a\sum_{i=1}^N z_{i}\frac{z_{i+1}-2z_{i}+z_{i-1}}{a^2}\nonumber\\
&\leq a\sum_{i=1}^N\biggl(\frac{1}{4\varepsilon}z_i^2+\varepsilon\biggl(\frac{z_{i+1}-2z_i+z_{i-1}}{a^2}\biggr)^2\biggr) .
\end{align}
Besides, due to $y_i(t)=\phi(\alpha_i,t)$, we have
\begin{align*}
a\biggl[-\frac{1}{(y_{i+1}-y_i)^2}-\frac{3a^2}{(y_{i+1}-y_i)^4}+\frac{1}{(y_i-y_{i-1})^2}+\frac{3a^2}{(y_i-y_{i-1})^4}\biggr] \frac{a}{y_i-y_{i-1}}\leq C_0(m_1,m_2),
\end{align*}
$$\biggl(-\frac{1}{(y_{i+1}-y_i)^2}-\frac{3a^2}{(y_{i+1}-y_i)^4}\biggr) a^2 \frac{a}{y_i-y_{i-1}}\leq -C(m_2)$$
for $a$ small enough.

Then for $I_1$, we have
\begin{align*}
I_1&=a\sum_{i=1}^N\frac{T_i}{\frac{y_i-y_{i-1}}{a}}\frac{\frac{z_{i+1}-z_i}{a}-\frac{z_{i}-z_{i-1}}{a}}{a}\\
&\leq C_1(m_1,m_2)a\sum_i z_i^2-\frac{3}{4}C(m_2)a\sum_i\biggl(\frac{z_{i+1}-2z_i+z_{i-1}}{a^2}\biggr)^2.
\end{align*}

Let us keep in mind that functions, such as $M_i,$ involving only $\frac{y_{i+1}-y_i}{a}$ can be bounded by a constant depending only on $m_1,\,m_2$.
Then similar to the estimate for $I_1$, together with \eqref{young}, we have
$$I_2\leq  C_{2}(m_1,m_2)a\sum_i z_i^2
+\frac{1}{4}C(m_2)a\sum_i\biggl(\frac{z_{i+1}-2z_i+z_{i-1}}{a^2}\biggr)^2,$$
and
$$I_3\leq  C_3(m_1,m_2)a\sum_i z_i^2
+\frac{1}{4}C(m_2)a\sum_i\biggl(\frac{z_{i+1}-2z_i+z_{i-1}}{a^2}\biggr)^2.$$
Here $C_i(m_1,m_2),\,i=0,1,2,3$ are positive constants depending only on $m_1,\,m_2.$

Combining estimates for $I_1,\, I_2,\,I_3$, we have
$$\frac{\ud \|z(t)\|_{\ell^2}^2}{\ud t}+\frac{1}{4}C(m_2)a\sum_i\biggl(\frac{z_{i+1}-2z_i+z_{i-1}}{a^2}\biggr)^2\leq C(m_1,m_2)\|z\|^2_{\ell^2}.$$
Then Gr\"{o}nwall's inequality yields \eqref{linear_z}.

Step 2. Now we consider Hilbert transform term $-\frac{2}{L}\sum_{j\neq i}\frac{a}{x_j-x_i}$. Then the terms $M_i,\,T_i$ in \eqref{linear_2} become \eqref{619_1} and \eqref{619_2}.

 First Lemma \ref{l4.2} and Lemma \ref{ll} show that $\sum_{j\neq i}\frac{a}{y_j-y_i}$ can be estimated by $C(m_1,m_2)$ and $\PV\int_0^1 \cot\frac{\pi}{L}(\phi(\alpha)-\phi(\beta))\ud\beta$.

 Second, from the proof of Lemma \ref{l4.2}, we know $a\sum_{j\neq i}\frac{z_j-z_i}{(y_j-y_i)^2}$ can be estimated by $C(m_1,m_2)$ and $\PV\int_0^1 \sec^2\frac{(\phi(\alpha)-\phi(\beta))\pi}{L}(\psi(\alpha)-\psi(\beta))\ud\beta$,
 where $\psi$ is the piecewise-cubic interpolant of $z$.

 Then using the same arguments in step 2 of the proof of Proposition \ref{prop17_1}, we can conclude \eqref{linear_z}.
\end{proof}

\subsection{Construction of solution with high-order truncation error}\label{sec6.3}

From now on, we proceed under the same hypothesis of Theorem \ref{thm_main}, i.e. we assume for some $\beta<0$, the initial datum $h(0)$ smooth enough and satisfies
\begin{equation}
 h_x(0)\leq \beta <0.
\end{equation}

By Theorem \ref{local_h} and Proposition \ref{h-phi}, for some constant $m\in\mathbb{N}$ large enough, we know there exists $T_m>0$, such that
\begin{equation}\label{04_temp3}
\phi(\alpha,t)\in C([0,T_m];C^{m}[0,1])
\end{equation}
is the strong solution to \eqref{phi3}.
Obviously, there exist $M>0$, whose values depend only on $\beta$ and $\|h(0)\|_{W^{m,2}}$, such that
\begin{equation}\label{04_temp4}
\begin{aligned}
\phi_\alpha\leq \frac{\beta}{2}<0,\quad \lvert\phi^{(i)}\rvert \leq M, \text{ for }1\leq i\leq m.
\end{aligned}
\end{equation}

Recalling equation \eqref{phi3}, we define $F(\phi):C^{\infty}[0,1]\rightarrow C^\infty[0,1]$ as an operator
$$F(\phi):=-\partial_\alpha(\frac{1}{\phi_\alpha}(\frac{\delta E}{\delta \phi})_\alpha).$$
Then we have
\begin{equation}\label{619phi}
\phi_t=F(\phi).
\end{equation}
For $F_i$ defined in \eqref{Fi}, denote
$$F_N:=\{F_i,\,i=1,\cdots,N\},\quad  r_N(\phi):=\{r_0(\alpha_i;\phi),\,i=1,\cdots,N\},$$
where $r_0(\alpha;\phi)$ is the function defined in \eqref{ralpha0}.
Then for $\phi_N=\{\phi_i,\,i=1,\cdots,N\}$, Theorem \ref{th4.4} shows that
$$\dot{\phi}_N=F_N(\phi_N)+r_N(\phi)a+O(a^2).$$

Now we want to construct $y=\phi+a \psi$, for $\psi$ satisfying the same regularity with $\phi$, such that $y$ has a higher truncation error than $\phi$.
In fact, we state
\begin{prop}\label{prop19_2}
Let $T_m>0$ in \eqref{04_temp3} and $\phi$ be the solution of \eqref{619phi}. Then there exists $\psi$ smooth enough such that
$\|\psi(\cdot,t)\|_{L^2([0,1])}\text{is uniformly bounded for }t\in[0,T_m],$
and
\begin{equation}\label{620_1}
y(\alpha,t)=\phi(\alpha,t)+a\psi(\alpha,t)
\end{equation}
satisfies the ODE system \eqref{ODE} till order $O(a^7)$, i.e. the nodal values $y_N=\{y(\alpha_i,t),\,i=1,\cdots,N\}$ satisfy
\begin{equation}\label{619_3}
\dot{y}_N=F_N(y_N)+O(a^7).
\end{equation}
\end{prop}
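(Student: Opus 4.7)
The plan is to construct $\psi$ as a truncated asymptotic expansion in $a$,
\[
\psi(\alpha, t) = \psi_1(\alpha, t) + a\,\psi_2(\alpha, t) + \cdots + a^5\,\psi_6(\alpha, t),
\]
with each $\psi_k$ smooth in $(\alpha,t)$ on $[0,1]\times[0,T_m]$ and satisfying $\psi_k(\cdot,0)\equiv 0$. Then $y = \phi + a\psi$ is a polynomial in $a$ of degree $6$ agreeing with $\phi$ at $t=0$, and the uniform $L^2$ bound on $\psi$ follows once each $\psi_k$ is uniformly bounded on $[0,T_m]$.

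The first step is to upgrade the consistency Theorem~\ref{th4.4} to sixth order. Pushing the Taylor expansions of Lemmas~\ref{l4.1}, \ref{ll}, \ref{l4.3} and the Euler--Maclaurin argument of Lemma~\ref{l4.2} up to order $a^6$ --- legitimate because $\phi\in C([0,T_m];C^m[0,1])$ for $m$ chosen large, and because the parity cancellation of odd Bernoulli-kernel terms used in Lemma~\ref{l4.2} persists at every even order --- yields
\[
F_N(\phi_N) - \dot{\phi}_N \;=\; \sum_{j=0}^{5} a^{\,j+1}\, r_j(\alpha_i;\phi) + O(a^{7}),
\]
with $r_0$ as in \eqref{ralpha0} and each $r_j$ an explicit smooth functional of $\phi$ and finitely many of its derivatives.

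The second step is to Taylor expand $F_N(y_N)$ around $\phi_N$,
\[
F_N(y_N) = F_N(\phi_N) + \sum_{\ell=1}^{6}\frac{1}{\ell!} D^\ell F_N(\phi_N)\bigl[a\psi_{1,N}+\cdots+a^6\psi_{6,N}\bigr]^{\otimes\ell} + O(a^7),
\]
and match the coefficient of $a^k$ in $\dot y_N - F_N(y_N)=O(a^7)$ for $k=1,\ldots,6$. This produces a hierarchy of discrete linear equations
\[
\dot\psi_{k,N} = DF_N(\phi_N)\cdot\psi_{k,N} + \Sigma_{k,N}\bigl[\phi,\psi_1,\ldots,\psi_{k-1}\bigr],
\]
whose discrete linear operator $DF_N(\phi_N)$ is, by the same Taylor/Euler--Maclaurin analysis applied now to the linearization, the nodal restriction of the continuum linearized operator from \eqref{A_H}--\eqref{B_H} up to $O(a)$; and whose source $\Sigma_k$, assembled from the consistency residual $r_{k-1}$ together with the multilinear terms $D^\ell F_N$ for $\ell\geq 2$ applied to already-constructed $\psi_j$'s, agrees to leading order with the nodal restriction of an explicit smooth function $\sigma_k(\alpha;\phi,\psi_1,\ldots,\psi_{k-1})$.

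The third step is to \emph{define} each $\psi_k$ as the strong solution of the continuum linearized PDE
\[
\partial_t \psi_k = -\partial_\alpha\!\left(-\frac{\psi_{k,\alpha}}{\phi_\alpha^{2}} A_\alpha + \frac{B_\alpha}{\phi_\alpha}\right) + \sigma_k, \qquad \psi_k(\cdot,0)=0,
\]
with $A$ as in \eqref{A_H} and $B$ as in \eqref{B_H} with $\psi$ replaced by $\psi_k$. Proposition~\ref{prop17_1} gives $L^2$ stability; bootstrapping the same energy argument to higher Sobolev norms (exploiting the dissipative control on $\psi_{k,\alpha\alpha}$ established in the proof of Proposition~\ref{prop17_1}) yields uniform bounds on $\psi_k$ in whatever Sobolev norm is needed to iterate, provided $m$ in Theorem~\ref{local_h} is chosen sufficiently large. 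The nodal restriction of such a smooth $\psi_k$ then satisfies the discrete linear equation up to $O(a)$, so the cumulative residual in $\dot y_N - F_N(y_N)$ collapses to $O(a^7)$ as claimed.

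The main obstacle is twofold. First, Step~1 requires careful bookkeeping of Taylor and Euler--Maclaurin expansions to sixth order; at each odd power of $a$ one must verify that the Bernoulli-kernel contributions vanish by the parity argument of Lemma~\ref{l4.2}. Second, the multilinear forms $D^\ell F_N(\phi_N)$ for $\ell\geq 2$ involve inverse-cubic and inverse-fifth-power kernels in $y_{i+1}-y_i$, whose smooth dependence on $\phi$ must be controlled uniformly in $a$; this relies essentially on the strict bound $\phi_\alpha \leq \beta/2 < 0$ from \eqref{04_temp4}.
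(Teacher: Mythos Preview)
Your proposal is correct and follows essentially the same strategy as the paper: build $\psi$ as a finite asymptotic series whose successive terms solve the linearized continuum equation~\eqref{linear_1} with source given by the residual at the previous order, invoking Proposition~\ref{prop17_1} (and its higher-norm analogues) for uniform bounds, and iterating until the discrete residual is $O(a^{7})$. The paper carries out only the first step explicitly and then writes ``repeat above processes to get higher order corrections,'' so your version is in fact more detailed than the original.

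One organizational difference is worth noting. You Taylor-expand $F_N(y_N)$ at the base point $\phi_N$ and then argue separately that the discrete Fr\'echet derivative $DF_N(\phi_N)$ agrees to $O(a)$ with the nodal restriction of the continuum linearized operator. The paper avoids this extra comparison by reapplying the consistency Theorem~\ref{th4.4} directly at the perturbed function $y=\phi+a\psi$: since $y$ satisfies the same regularity and monotonicity bounds as $\phi$ for $a$ small, Theorem~\ref{th4.4} gives $F_N(y_N)=F(y)\big|_{\alpha_i}-r_N(y)\,a+O(a^{2})$, and the discrete-to-continuum passage is absorbed into the consistency result itself rather than redone for the linearization. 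Both routes arrive at the same corrector equation $\psi_t=\text{(linearized operator)}\,\psi - r_0(\phi)$; the paper's way simply reuses the existing machinery more directly.
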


\begin{proof}

To simplify the calculation, first we show there exists $\psi$ such that
\begin{equation}\label{7.8}
\dot{y}_N=F_N(y_N)+O(a^2).
\end{equation}

For $y_N=\phi_N+a\psi_N,$ where $\psi_N$ is the nodal values of $\psi,$
Theorem \ref{th4.4} shows that
$$F_N(y_N)=F_N(\phi_N+a\psi_N)=F(\phi+a \psi)|_{\alpha=\alpha_i}-r_N(\phi+a \psi)a-O(a^2).$$
Hence $y_N$ satisfies
$$\dot{y}_N-F_N(y_N)=a \dot{\psi}_N+[F(\phi)-F(\phi+a \psi)]|_{\alpha=\alpha_i}+r_N(\phi+a \psi)a+O(a^2).$$
Now by Proposition \ref{prop17_1}, we can choose $\psi$ to be the solution of \eqref{619phi}'s linearized system
\begin{equation}
\psi_t=-\partial_\alpha \biggl( -\frac{\psi_\alpha}{\phi_\alpha^2} \partial_\alpha A+\frac{\partial_\alpha B}{\phi_\alpha} \biggr)-r_0(\phi),
\end{equation}
where $A,\,B$ are defined in \eqref{A_H} and \eqref{B_H}. After that, \eqref{7.8} holds.

To obtain higher order truncation error construction, we can repeat above processes to get higher order corrections. We omit the details here.
\end{proof}

\subsection{Convergence of ODE and PDE system}\label{sec6.4}

In this section, we will combine above results and complete the proof of Theorem \ref{thm_main}.

\begin{proof}[Proof of Theorem \ref{thm_main}.]
Assume $\phi$ is the strong solution of \eqref{phi3} satisfying \eqref{04_temp3} and \eqref{04_temp4} with maximal existence time $T_m>0$. Let $\beta,\,M$ be constants in equation \eqref{04_temp4}.
  Recall vector $x(t)=\{x_i(t); \,i=1,\cdots,N\}$ is the solution of \eqref{ODE}, and with slight abuse of notation, denote $y(t):=\{y(\alpha_i,t); \,i=1,\cdots,N\}$ being the constructed vector value function $y_N$ in Proposition \ref{prop19_2}. We will first obtain the convergence rate for $x,\,y$ in Step 1, 2, and then obtain the convergence rate for $x,\,\phi$ in Step 3.

Step 1. We first claim that under the \textit{a-priori} assumption
\begin{equation}\label{pri_xy}
\|x(t)-y(t)\|_{\ell^\infty}\leq a^{6+\frac{1}{3}}, \text{ for }t\in[0,T_m],
\end{equation}
we have
\begin{equation}\label{l2_xy}
\|x(t)-y(t)\|_{\ell^2}\leq C(\beta,M,T_m)a^7,\text{ for }t\in[0,T_m],
\end{equation}
where $C(\beta,M,T_m)$ is a constant depending only on $\beta,\,M,\,T_m.$ We will verify the \textit{a-priori} assumption \eqref{pri_xy} in Step 2.

In fact, from Proposition \ref{prop19_2}, we know $y$ has $a^7$-order consistence error, i.e.
$$\frac{\ud (y-x)}{\ud t}=F_N(y)-F_N(x)+O(a^7).$$
Denote the inner product for $x,\,y$ as
$$\langle x,y \rangle :=\sum_{i=1}^N a x_i y_i.$$
Then for $\beta,\,M$ defined in \eqref{04_temp4}, we have
\begin{equation}\label{03_7.7}
\begin{aligned}
\langle x-y,\dot{x}-\dot{y} \rangle =&\langle x-y, \nabla F_N(y)(x-y) \rangle +\langle x-y,(x-y)\nabla^2 F_N(y)(x-y)^T \rangle \\
&+C(\beta,M)\langle x-y, a^7\rangle,
\end{aligned}
\end{equation}
where $C(\beta,M)$ depends only on $\beta,\,M$.

For the second term in \eqref{03_7.7}, we can see
\begin{equation}\label{04_7.8}
\begin{aligned}
&\langle x-y,(x-y)\nabla^2 F_N(y)(x-y)^T \rangle\\
\leq & \|x-y\|_{\ell^2} \|\sum_{i,j=1}^N(x_i-y_i)(x_j-y_j)\partial_{ij}F_N\|_{\ell^2}\\
\leq & \|x-y\|_{\ell^2}^2 \|x-y\|_{\ell^\infty}\sqrt{\sum_{k=1}^N\Bigl(\sum_{i=1}^N \bigl(\sum_{j=1}^N (\partial_{ij}F_k)^2 \bigr)^{\frac{1}{2}} \Bigr)^2}\\
\leq & \|x-y\|_{\ell^2}^2 \|x-y\|_{\ell^\infty} N \max_{k}\Biggl(\sqrt{\sum_{i=1}^N\sum_{j=1}^N(\partial_{ij}F_k)^2}\Biggr),\\
\end{aligned}
\end{equation}
where we used H\"older's inequality in the last step.

Now keep in mind that functions involving only $\frac{y_{i+1}-y_i}{a}$  can be bounded by a constant depending only on $\beta,\,M$, and that
$$\Bigl\lvert \frac{1}{y_j-y_i}\Bigr\rvert\leq \max\{\frac{1}{y_{i+1}-y_i},\frac{1}{y_i-y_{i-1}}\}.$$
We can start to estimate the term $\max_{k}\Biggl(\sqrt{\sum_i\sum_j(\partial_{ij}F_k)^2}\Biggr).$

For $k=1,\cdots,N$, denote
\begin{align*}
Q_k:= &\frac{1}{y_{k+1}-y_k}\Biggl[-\sum_{\ell\neq k+1}\frac{a}{y_{\ell}-y_{k+1}}+\sum_{\ell\neq k}\frac{a}{y_{\ell}-y_{k}}+\biggl(\frac{1}{y_{k+2}-y_{k+1}}-2\frac{1}{y_{k+1}-y_{k}}+\frac{1}{y_{k}-y_{k-1}}\biggr)\\
&+\biggl(\frac{a^2}{(y_{k+2}-y_{k+1})^3}-2\frac{a^2}{(y_{k+1}-y_{k})^3}+\frac{a^2}{(y_{k}-y_{k-1})^3}\biggr)\Biggr].
\end{align*}
Then $F_k=\frac{1}{a}(Q_k-Q_{k-1}),$ and
$$(\partial_{ij}F_k)^2\leq \frac{1}{a} [(\partial_{ij}Q_k)^2+(\partial_{ij}Q_{k-1})^2]. $$

First calculate $\partial_{i}Q_k,$ for $k=1,\cdots,N$.
\begin{align*}
\partial_i Q_k=\left\{
                 \begin{array}{ll}
                   \dpstyle{\frac{a}{y_{k+1}-y_k}\Big[\frac{1}{(y_i-y_{k+1})^2}-\frac{1}{(y_i-y_k)^2} \Big]}, & \text{ for }\left.
                                                                                                                              \begin{array}{ll}
                                                                                                                                1\leq i\leq k-2, \\
                                                                                                                                k+3\leq i\leq N;
                                                                                                                              \end{array}
                                                                                                                            \right.\\
                   ~\\
                    \dpstyle{\frac{a}{y_{k+1}-y_k}\Big[\frac{1}{(y_{k-1}-y_{k+1})^2}-\frac{1}{(y_{k-1}-y_k)^2} \Big]} \\  \dpstyle{+\frac{1}{y_{k+1}-y_k}\frac{1}{(y_k-y_{k-1})^2}+\frac{1}{y_{k+1}-y_k}\frac{3a^2}{(y_k-y_{k-1})^4},}
                        &  \text{ for }i=k-1;\\
~\\
                    \dpstyle{\frac{a}{(y_{k+1}-y_k)^3}-\frac{4}{(y_{k+1}-y_k)^3}-\frac{8a^2}{(y_{k+1}-y_k)^5}}\\
                    \dpstyle{-\frac{y_{k+1}-2y_k+y_{k-1}}{(y_{k+1}-y_k)^2(y_k-y_{k-1})^2}-a^2\frac{3(y_{k+1}-y_k)-(y_k-y_{k-1})}{(y_{k+1}-y_k)^2(y_k-y_{k-1})^4}},
                    &  \text{ for }i=k;\\
~\\
                    \dpstyle{-\frac{a}{(y_{k+1}-y_k)^3}+\frac{4}{(y_{k+1}-y_k)^3}+\frac{8a^2}{(y_{k+1}-y_k)^5}}\\
                    \dpstyle{+\frac{y_{k+2}-2y_{k+1}+y_{k}}{(y_{k+1}-y_k)^2(y_{k+2}-y_{k+1})^2}+a^2\frac{(y_{k+2}-y_{k+1})-3(y_{k+1}-y_{k})}{(y_{k+1}-y_k)^2(y_{k+2}-y_{k+1})^4}},
                    &  \text{ for }i=k+1;\\
~\\
                \dpstyle{\frac{a}{y_{k+1}-y_k}\Big[\frac{1}{(y_{k+2}-y_{k+1})^2}-\frac{1}{(y_{k+2}-y_k)^2} \Big]} \\  \dpstyle{-\frac{1}{y_{k+1}-y_k}\frac{1}{(y_{k+2}-y_{k+1})^2}-\frac{1}{y_{k+1}-y_k}\frac{3a^2}{(y_{k+2}-y_{k+1})^4}},
                        &  \text{ for }i=k+2.\\
                 \end{array}
               \right.
\end{align*}
Hence
\begin{align*}
\partial_{ij}Q_k=\begin{array}{cc}
                \, & \begin{array}{cccccccccc}
                       j=1 & \cdots &\,k-2 & \,k-1 & \,\,j=k & \,k+1 & \,k+2 & \,\,\cdots & \,N-1 & j=N
                     \end{array}
\\
                \begin{array}{c}
                  i=1 \\
                  \vdots \\
                  k-2 \\
                  k-1 \\
                  k \\
                  k+1 \\
                  k+2 \\
                  k+3 \\
                  \vdots \\
                  i=N
                \end{array}
 & \left(
     \begin{array}{cccccccccc}
       O(\frac{1}{a^3}) & \textbf{0} & 0 & 0 & O(\frac{1}{a^3}) & O(\frac{1}{a^3}) & 0 & \textbf{0} & 0 & 0 \\
       \textbf{0} & \ddots & \textbf{0} & \textbf{0} & O(\frac{1}{a^3}) & O(\frac{1}{a^3}) & \textbf{0} & \textbf{0} & \textbf{0} & \textbf{0}\\
       0 & \textbf{0} & O(\frac{1}{a^3}) & 0 & O(\frac{1}{a^3}) & O(\frac{1}{a^3}) & 0 & \textbf{0} & 0 & 0 \\
       0 & \textbf{0} & 0 & O(\frac{1}{a^4}) & O(\frac{1}{a^4}) & O(\frac{1}{a^4}) & 0 & \textbf{0} & 0 & 0 \\
       0 & \textbf{0} & 0 & O(\frac{1}{a^4}) & O(\frac{1}{a^4}) & O(\frac{1}{a^4}) & 0 & \textbf{0} & 0 & 0  \\
       0 & \textbf{0} & 0 & 0 & O(\frac{1}{a^4}) & O(\frac{1}{a^4}) & O(\frac{1}{a^4}) & \textbf{0} & 0 & 0 \\
       0 & \textbf{0} & 0 & 0 & O(\frac{1}{a^4}) & O(\frac{1}{a^4}) & O(\frac{1}{a^4}) & \textbf{0} & 0 & 0 \\
       0 & \textbf{0} & 0 & 0 & O(\frac{1}{a^3}) & O(\frac{1}{a^3}) & 0 & O(\frac{1}{a^3}) & 0 & 0 \\
       \textbf{0} & \textbf{0} & \textbf{0} & \textbf{0} & O(\frac{1}{a^3}) & O(\frac{1}{a^3}) & \textbf{0} & \textbf{0} & \ddots & \textbf{0} \\
       0 & \textbf{0} & 0 & 0 & O(\frac{1}{a^3}) & O(\frac{1}{a^3}) & 0 & \textbf{0} & 0 & O(\frac{1}{a^3})
     \end{array}
   \right)
              \end{array},
\end{align*}
 where
$\{\partial_{ij}Q_k\}_{i,j=1,\cdots,k-2}$ and
$\{\partial_{ij}Q_k\}_{i,j=k+3,\cdots,N}$ are diagonal matrixes with
$O(\frac{1}{a^3})$ main diagonal entries and the bold zeros \textbf{0} represent zero matrices with corresponding dimensions.

For $Q_{k-1}$, we have a similar Hessian matrix.
Notice that only three terms in one row are nonzero and that only at most four terms in one column are order $\frac{1}{a^4}.$
Hence for $a$ small enough, we have
$$\max_{k}\Biggl(\sqrt{\sum_i\sum_j(\partial_{ij}F_k)^2}\Biggr)\leq C(\beta,M)\frac{1}{a^5}.$$
where $C(\beta,M)$ is a constant depending only on $\beta,\,M.$

Then from \eqref{04_7.8} and the \textit{a-priori} condition \eqref{pri_xy}, we have
$$\langle x-y,(x-y)\nabla^2 F_N(y)(x-y)^T \rangle\leq C(\beta,M)a^{\frac{1}{3}}\|x-y\|_{\ell^2}^2. $$
Combining this with \eqref{03_7.7}, together with linearized stability in Proposition \ref{prop19_1},
gives
\begin{align*}
\frac{\ud \|x-y\|_{\ell^2}^2}{\ud t}&\leq C(\beta,M)\|x-y\|^2_{\ell^2}+C(\beta,M)a^7\|x-y\|_{\ell^2}.
\end{align*}
Therefore by Gr\"{o}nwall's inequality,
we obtain
\begin{equation}\label{temp619}
\|x(t)-y(t)\|_{\ell^2}\leq C(\beta,M,T_m)(\|x(0)-y(0)\|_{\ell^2}+ a^7), \text{ for }t\in[0,T_m],
\end{equation}
where $C(\beta,M,T_m)$ is a constant depending only on $\beta,\,M,\,T_m.$ We choose initial data of $y$ such that $y(0)=x(0)$, so \eqref{temp619}
leads to \eqref{l2_xy}.

Step 2.  Now we need to verify the \textit{a-priori} assumption \eqref{pri_xy} is true for $t\in[0,T_m].$ In fact,
$$\|x(t)-y(t)\|_{\ell^\infty}\leq \frac{\|x(t)-y(t)\|_{\ell^2}}{\sqrt{a}}\leq C(\beta,M,T_m)a^{7-\frac{1}{2}}<<a^{6+\frac{1}{3}},$$
for $a$ small enough, $t\in[0,T_m]$.
Hence \eqref{l2_xy} actually verifies the \textit{a-priori} condition \eqref{pri_xy}.

Step 3.
For the exact strong solution $\phi$ of \eqref{phi3}, recall the nodal values $\phi_N=\{\phi_i,\,i=1,\cdots,N\}.$
By Proposition \ref{prop19_2}, we know that the constructed function $y$ in \eqref{620_1} satisfies
$$\|y(t)-\phi_N(t)\|_{\ell^2}=\|a\psi_N(t)\|_{\ell^2}\leq c a, \text{ for }t\in[0,T_m],$$
where we used $\psi(t)$, defined in Proposition \ref{prop19_2}, is uniformly bounded.
This, together with \eqref{l2_xy}, shows that
\begin{equation}
\|x(t)-\phi_N(t)\|_{\ell^2}\leq \|x(t)-y(t)\|_{\ell^2}+\|y(t)-\phi_N(t)\|_{\ell^2}\leq C(\beta,M,T_m) a, \text{ for }t\in[0,T_m],
\end{equation}
where $C(\beta,M,T_m)$ is a constant depending only on $\beta,\,M,\,T_m.$
This completes the proof of the Theorem \ref{thm_main}.
\end{proof}

\section*{Acknowledgements}
We would like to thank the support by the National Science
  Foundation under grants DMS-1514826 (JGL), DMS-1454939 (JL), and
  also through the research network KI-Net RNMS11-07444. We thank Dionisios Margetis,
  Jeremy Marzuola, Yang Xiang, Aaron N.K. Yip for helpful
  discussions.

\newpage


\end{document}